%
%
%
%
%

%
\RequirePackage{fix-cm}
\documentclass[smallextended]{svjour3_alt}       
\smartqed  
\usepackage{graphicx}
%
%
%


\usepackage[labelfont=bf]{caption} 
\usepackage{geometry}

\usepackage{algorithm}
\usepackage{algpseudocode}
\usepackage{bm}
\usepackage{amsmath,amssymb}
\usepackage[utf8]{inputenc}
\usepackage{fourier}
\usepackage{times}

\usepackage{commath} 
\usepackage{stmaryrd} 
\usepackage{enumerate} 
\usepackage[numbers]{natbib} 
\usepackage{hyperref}

\usepackage{tikz}
\usetikzlibrary{spy,calc}
\usepackage{pgfplots}
\pgfplotsset{compat=1.13}
\usepackage{subcaption}

%

\newcommand{\algrule}[1][.2pt]{\par\vskip.5\baselineskip\hrule height #1\par\vskip.5\baselineskip}

\hypersetup{
    colorlinks,
    citecolor=blue,
    filecolor=black,
    linkcolor=black,
    urlcolor=blue
}

\newcommand{\algoref}{Algorithm~\ref}

\renewcommand{\secref}{Section~\ref}
\newcommand{\inner}[1]{\langle #1 \rangle}

\newcommand{\eps}{\varepsilon}

\newcommand{\NN}{\mathbb{N}} \newcommand{\PP}{\mathbb{P}}
\newcommand{\QQ}{\mathbb{Q}} \newcommand{\RR}{\mathbb{R}}

 \newcommand{\calD}{\mathcal{D}}

\DeclareMathOperator{\co}{co}

\DeclareMathOperator{\Div}{div}

\DeclareMathOperator{\Mod}{mod}

\DeclareMathOperator{\TV}{TV}
\DeclareMathOperator{\TGV}{TGV}

\DeclareMathOperator{\sgn}{sgn}

\DeclareMathOperator{\SSIM}{SSIM}

\DeclareMathOperator{\Sym}{Sym}
\DeclareMathOperator*{\argmin}{arg\,min}


\numberwithin{equation}{section}
\numberwithin{figure}{section}

\newtheorem{thm}{Theorem}
\numberwithin{thm}{section}

\newtheorem{defn}[thm]{Definition}
\newtheorem{lem}[thm]{Lemma}
\newtheorem{prop}[thm]{Proposition}

\newtheorem{rem}[thm]{Remark}
\newtheorem{ex}[thm]{Example}
\renewenvironment{proof}{\emph{Proof.}}{\qed}

%
%

\usepackage{verbatim}
\newenvironment{figuretmp}{\begin{figure}}{\end{figure}}

\begin{document}

\title{A geometric integration approach to nonsmooth, nonconvex optimisation\thanks{All authors acknowledge support from CHiPS (Horizon 2020 RISE project grant). E. S. R, M. J. E. and C.-B. S. acknowledge support from the Cantab Capital Institute for the Mathematics of Information.  M. J. E. and C.-B. S. acknowledge support from Leverhulme Trust project “Breaking the non-convexity barrier”, EPSRC grant “EP/M00483X/1”,  and EPSRC centre “EP/N014588/1”. G. R. W. Q. acknowledges support from the Australian Research Council, and is grateful to the Mittag-Leffler Institute for a productive stay. Moreover, C.-B. S. acknowledges support from the RISE project NoMADS and the Alan Turing Institute.}
}



\titlerunning{Geometric integration for nonsmooth problems}        

\author{Erlend S. Riis \and
		Matthias J. Ehrhardt  \and
		G. R. W. Quispel \and
         Carola-Bibiane Sch{\"o}nlieb
}


\institute{Erlend S. Riis \at
              Department of Applied Mathematics and Theoretical Physics, University of Cambridge, UK \\
              \email{e.s.riis@damtp.cam.ac.uk} 
           \and
           Matthias J. Ehrhardt \at
              Department of Applied Mathematics and Theoretical Physics, University of Cambridge, UK \\
              \email{m.j.ehrhardt@damtp.cam.ac.uk}           
           \and
           G. R. W. Quispel \at
           	Department of Mathematics and Statistics, La Trobe University, Victoria 3086, Australia \\
           	\email{r.quispel@latrobe.edu.au}
           \and
           Carola-Bibiane Sch{\"o}nlieb \at
              Department of Applied Mathematics and Theoretical Physics, University of Cambridge, UK \\
              \email{cbs31@cam.ac.uk} 
}

\date{}

\maketitle

\begin{abstract}
The optimisation of nonsmooth, nonconvex functions without access to gradients is a particularly challenging problem that is frequently encountered, for example in model parameter optimisation problems. Bilevel optimisation of parameters is a standard setting in areas such as variational regularisation problems and supervised machine learning. We present efficient and robust derivative-free methods called randomised Itoh--Abe methods. These are generalisations of the Itoh--Abe discrete gradient method, a well-known scheme from geometric integration, which has previously only been considered in the smooth setting. We demonstrate that the method and its favourable energy dissipation properties are well-defined in the nonsmooth setting. Furthermore, we prove that whenever the objective function is locally Lipschitz continuous, the iterates almost surely converge to a connected set of Clarke stationary points. We present an implementation of the methods, and apply it to various test problems. The numerical results indicate that the randomised Itoh--Abe methods are superior to state-of-the-art derivative-free optimisation methods in solving nonsmooth problems while remaining competitive in terms of efficiency.
\keywords{Geometric numerical integration \and discrete gradient methods \and derivative-free optimisation \and nonconvex optimisation \and nonsmooth optimisation  \and Clarke subdifferential \and bilevel optimisation}
\subclass{49M25 \and 49Q15 \and 65K10 \and 90C15 \and 90C26 \and 90C56 \and 94A08}
\end{abstract}

\section{Introduction}
\label{intro}

We consider the unconstrained optimisation problem
\begin{equation}
\label{eq:main}
\min_{x \in \RR^n} V(x),
\end{equation}
where the objective function \(V\) is locally Lipschitz continuous, bounded below and coercive---the latter meaning that \(\set{x \in \RR^n \; : \; V(x) \leq M}\) is compact for all \(M \in \RR\). The function may be nonconvex and nonsmooth, and we assume no knowledge besides point evaluations \(x \mapsto V(x)\). To solve \eqref{eq:main}, we present \emph{randomised Itoh--Abe methods}, a generalisation of the \emph{Itoh--Abe discrete gradient method}. The latter is a derivative-free optimisation scheme,\footnote{Not to be confused with another derivative-free method with the same name proposed by Bagirov et al. \citep{bag08}, which uses a different concept of a discrete gradient.} that has previously only been considered for differentiable functions.

Discrete gradient methods, a tool from geometric numerical integration, are optimisation schemes that inherit the energy dissipation of continuous gradient flow. The iterates of the methods monotonically decrease the objective function, for all time steps, and Grimm et al. \citep{gri17} recently provided a convergence theory for solving \eqref{eq:main} in the continuously differentiable setting. We extend the concepts and results of their work and show that the Itoh--Abe discrete gradient method can be applied in the nonsmooth case, and, furthermore, that the favourable dissipativity property of the methods extends to this setting. Furthermore, we prove that for locally Lipschitz continuous functions the iterates converge to a set of stationary points, defined by the Clarke subdifferential framework.

\subsection{Gradient flow and the discrete gradient method}

For a differentiable function \(V:\RR^n \to \RR\), gradient flow is the ODE system defined by
\begin{equation}
\label{eq:gradient_flow}
\dot{x} = -\nabla V(x), \qquad x(0) = x_0 \in \RR^n,
\end{equation}
where the dot represents differentiation with respect to time. By applying the chain rule, we compute
\begin{equation}
\label{eq:gradient_flow_dissipation}
\dod{}{t}V(x(t)) = \inner{\nabla V(x(t)), \dot{x}(t)} = - \|\nabla V(x(t))\|^2 = - \|\dot{x}(t)\|^2 \leq 0,
\end{equation}
where \(\|x\|\) denotes the 2-norm \(\sqrt{\inner{x,x}}\). This implies that gradient flow is inherently an energy dissipative system.

In the field of geometric numerical integration, one studies methods for numerically solving ODEs that also preserve structures of the continuous system---see \citep{hai06, mcl01} for an introduction. Discrete gradient methods can be applied to first-order gradient systems to preserve energy conservation laws, dissipation laws, as well as Lyapunov functions \citep{gon96, ito88, mcl99, qui96}. They are defined as follows.
\begin{defn}
Let \(V: \RR^n \to \RR\) be continuously differentiable. A \emph{discrete gradient} is a continuous mapping \(\overline{\nabla} V: \RR^n \times \RR^n \to \RR^n\) that satisfies the two following properties.
\[
\left\{ \begin{alignedat}{2}
\inner{ \overline{\nabla} V(x, y), y-x} &= V(y) - V(x) & \quad \mbox{(consistency)}& \\
\lim_{y \to x} \overline{\nabla} V(x,y) &= \nabla V(x) & \quad \mbox{(mean value property)}&
\end{alignedat}
\right.
\qquad \mbox{for all } x, y \in \RR^n. 
\]
\end{defn}

We now introduce the discrete gradient method for optimisation. For \(x^0 \in \RR^n\) and time steps \(\tau_k > 0\), \(k \in \NN\), we solve
\begin{equation}
\label{eq:dg_method}
x^{k+1} = x^k - \tau_k \overline{\nabla} V(x^k, x^{k+1}).
\end{equation}
We apply the above mean value property to derive that the iterates decrease \(V\).
\begin{align}
V(x^{k+1}) - V(x^k) &= \inner{\overline{\nabla}V(x^k, x^{k+1}), x^{k+1} - x^k} \nonumber \\
\label{eq:dissipation}
&= - \tau_k \|\overline{\nabla}V(x^k, x^{k+1})\|^2 = - \frac{1}{\tau_k} \|x^{k+1} - x^k\|^2.
\end{align}
Note that the decrease holds for all time steps \(\tau_k > 0\), and that \eqref{eq:dissipation} can be seen as a discrete analogue of the dissipative structure of gradient flow \eqref{eq:gradient_flow_dissipation}, replacing derivatives by finite differences.

Grimm et al. \citep{gri17} proved that for coercive, continuously differentiable functions, the iterates of \eqref{eq:dg_method} converge to a set of stationary points, provided that there are strictly positive constants \(\tau_{\min}, \tau_{\max}\) such that \(\tau_k \in [\tau_{\min}, \tau_{\max}]\) for all \(k \in \NN\).

\subsubsection{Itoh--Abe methods}

The \emph{Itoh--Abe discrete gradient} \citep{ito88} (also known as coordinate increment discrete gradient)\footnote{There are infinitely many discrete gradients, each with a corresponding discrete gradient method. See \citep{gri17} for further examples.} is defined as
\begin{equation*}
\label{eq:itoh_abe}
\overline{\nabla} V(x,y) = \begin{pmatrix}
\frac{V\del{y_1, x_2, \ldots, x_n} - V(x)}{y_1 - x_1}  \\
\frac{V\del{y_1, y_2, x_3, \ldots, x_n} - V\del{y_1, x_2, \ldots, x_n}}{y_2 - x_2} \\
\vdots \\
\frac{V(y) - V\del{y_1, \ldots, y_{n-1},x_n}}{y_n - x_n}
\end{pmatrix}.
\end{equation*}
Solving an iterate of the discrete gradient method \eqref{eq:dg_method} with the Itoh--Abe discrete gradient is equivalent to successively solving \(n\) scalar equations of the form
\begin{equation*}
\begin{aligned}
x^{k+1}_1 &= x^k_1 - \tau_k \frac{V(x^{k+1}_1, x^k_2, \ldots, x_n^k)- V(x^k)}{x^{k+1}_1 - x^k_1} \\
x^{k+1}_2 &= x^k_2 - \tau_k \frac{V(x^{k+1}_1, x^{k+1}_2, x^k_3, \ldots, x_n^k)- V(x^{k+1}_1, x^k_2, \ldots, x_n^k)}{x^{k+1}_2 - x^k_2} \\
&\;\vdots \\
x^{k+1}_n &= x^k_n - \tau_k \frac{V(x^{k+1})- V(x^{k+1}_1, x^{k+1}_2, \ldots, x^{k+1}_{n-1}, x_n^k)}{x^{k+1}_n - x^k_n}.
\end{aligned}
\end{equation*}

We generalise the Itoh--Abe discrete gradient method to \emph{randomised Itoh--Abe methods} accordingly. Let \((d^k)_{k \in \NN} \subset S^{n-1}\) be a sequence of directions, where \(S^{n-1}\) denotes the unit sphere \(\set{x \in \RR^n \; : \; \|x\| = 1}\).  The directions can be drawn from a random distribution or chosen deterministically. At the \(k\)th step, we update
\begin{equation}
\label{eq:itoh_abe_method}
x^{k+1} \mapsto x^k - \tau_{k} \beta_{k} d^{k}, \; \mbox{where } \beta_{k} \neq 0 \mbox{ solves } \; \beta_{k} = - \frac{V(x^k - \tau_{k} \beta_{k} d^{k}) - V(x^k)}{\tau_{k} \beta_{k}},
\end{equation}
If no such \(\beta_{k}\) exists, we set \(x^{k+1} = x^k\). We formalise this method in \algoref{algo:main}. We assume throughout the paper that the time steps \((\tau_k)_{k \in \NN}\) are bounded between two strictly positive constants \(\tau_{\min}, \tau_{\max}\), which can take arbitrary values.

\begin{algorithm}[!ht]
\caption{Randomised Itoh--Abe method}
    \textbf{Input:}
        starting point \(x^0\),
        directions $(d^k)_{k \in \NN}$,
        time steps \((\tau_k)_{k\in\NN}\).
\begin{algorithmic}
\algrule
	\For{\(k = 0, 1, 2, \ldots\)}
		\State Update \(x^{k+1} = x^k - \tau_k \beta_k d^k\) via \eqref{eq:itoh_abe_method}
	\EndFor
\end{algorithmic}
\label{algo:main}
\end{algorithm}

Observe that if \((d^k)_{k\in\NN}\) cycle through the standard coordinates \((e^i)_{i=1}^n\) with the rule \(d^k = e^{[k \Mod n] + 1}\), then computing \(n\) steps of \eqref{eq:itoh_abe_method} corresponds to one step of \eqref{eq:dg_method} with the Itoh--Abe discrete gradient. Furthermore, the dissipation properties \eqref{eq:dissipation} can be rewritten as
\begin{align}
\label{eq:dissipation_dg}
V(x^{k+1}) - V(x^k) &= -\tau_k\del{\frac{V(x^{k+1}) - V(x^k)}{\|x^{k+1} - x^k\|}}^2 = -\frac{1}{\tau_k} \|x^{k+1} - x^k\|^2.
\end{align}
Consequently, the dissipative structure of the Itoh--Abe methods is well-defined in a derivative-free setting.

Ehrhardt et al. \citep{rii18smooth} studied Itoh--Abe methods in the smooth setting, asserting linear convergence rates for functions that satisfy the Polyak--{\L}ojasiewicz inequality \citep{kar16}. Furthermore, the application of Itoh--Abe discrete gradient methods to smooth optimisation problems is well-documented. Applications include convex variational regularisation problems for image analysis \citep{gri17}, nonconvex image inpainting problems with Euler's elastica regularisation \citep{rin17}, for which it outperformed gradient-based schemes, and the popular Gauss--Seidel method and successive-over-relaxation (SOR) methods for solving linear systems \citep{miy17}.

\subsection{Bilevel optimisation and blackbox problems}

An important application for developing derivative-free solvers is \emph{model parameter optimisation problems}. The setting for this class of problems is as follows. A model depends on some tunable parameters \(\alpha \in \RR^n\), so that for a given parameter choice \(\alpha\), the model returns an output \(u_\alpha\). There is a cost function \(\Phi\), which assigns to output \(u_\alpha\) a numerical score \(\Phi(u_\alpha) \in \RR\), which we want to minimise. The associated model parameter optimisation problem becomes
\[
\alpha^* \in \argmin_{\alpha \in \RR^n} \Phi(u_\alpha).
\]
A well-known example of parameter optimisation problems is supervised machine learning.

In this paper, we consider one instance of such problems in image analysis, called \emph{bilevel optimisation} of variational regularisation problems. Here, the model is given by a variational regularisation problem for image denoising,
\[
u_\alpha \in \argmin_{u} \frac{1}{2}\|u - f^\delta\|^2 + R_\alpha(u),
\]
where \(f^\delta\) is a noisy image and \(\alpha\) is the regularisation parameter. For training data with desired reconstruction \(u^\dagger\), we consider a scoring function \(\Phi\) that estimates the discrepancy between \(u^\dagger\) and the reconstruction \(u_\alpha\). In \secref{sec:bilevel}, we apply Itoh-Abe methods to solve these problems.

Bilevel optimisation problems, and model parameter optimisation problems in general, pose several challenges. They are often nonconvex and nonsmooth, due to the nonsmoothness and nonlinearity of \(\alpha \mapsto u_\alpha\). Furthermore, the model simulation \(\alpha \mapsto u_\alpha\) is an algorithmic process for which gradients or subgradients cannot easily be estimated. Such problems are termed \emph{blackbox optimisation problems}, as one only has access to point evaluations of the function. It is therefore of great interest to develop efficient and robust derivative-free methods for such optimisation problems. 

There is a rich literature on bilevel optimisation for variational regularisation problems in image analysis, c.f. e.g. \citep{cal15, de17, kun13, och15}. Furthermore, model parameter optimisation problems appear in many other applications. These include optimising for the management of water resources \citep{fow08}, approximation of a transmembrane protein structure in computational biology \citep{gra04}, image registration in medical imaging \citep{oeu07}, the building of wind farms \citep{dup16}, and solar energy utilisation in architectural design \citep{kam10}, to name a few.

\subsection{Related literature on nonsmooth, nonconvex optimisation}

Although nonsmooth, nonconvex problems are known for their difficulty compared to convex problems, a rich optimisation theory has grown since the 1970s. As the focus of this paper is derivative-free optimisation, we will compare the methods' convergence properties and performance to other derivative-free solvers. Audet and Hare \citep{aud17} recently provided a reference work for this field.

While there a myriad of derivative-free solvers, few provide convergence guarantees for nonsmooth, nonconvex functions. Audet and Dennis Jr \citep{aud06} introduced the \emph{mesh adaptive direct search} (MADS) method for constrained optimisation, with provable convergence guarantees to stationary points for nonsmooth, nonconvex functions. Direct search methods evaluate the function at a finite polling set, compare the evaluations, and update the polling set accordingly. Such methods only consider the ordering of the evaluations, rather than the numerical differences. A significant portion of derivative-free methods are direct search methods, and the most well known of these is the Nelder--Mead method (also known as the downhill simplex method) \citep{nel65}. 

Alternatively, model-based methods that build a local quadratic model based on evaluations are well-documented \citep{car18, pow06, pow09}. While such methods tend to work well in practice, they are normally designed only for smooth functions, so their performance on nonsmooth functions is not guaranteed.

Fasano et al. \citep{fas14} formulated a derivative-free line search method termed DFN and analyse its convergence properties for nonsmooth functions for the Clarke subdifferential, in the constrained setting. The Itoh--Abe methods share many similarities with DFN, such as performing line searches along dense directions, and they employ a similar convergence analysis. However, the line searches of these methods differ, and the Itoh--Abe methods are in particular motivated by discrete gradient methods and preserving the dissipative structure of gradient flow. Furthermore, our convergence analysis is more comprehensive, considering both stochastic and deterministic methods, as well as obtaining stronger convergence results in the deterministic case. Building on the aforementioned algorithm, Liuzzi and Truemper \citep{liu17} formulated a derivative-free method that is a hybrid between DFN and MADS.

Furthermore, we note the resemblance of randomised Itoh--Abe methods \eqref{eq:itoh_abe_method}, when \((d^k)_{k\in\NN}\) is randomly, independently drawn from \(S^{n-1}\), to the random search method proposed by Polyak in \citep{pol87} and studied for nonsmooth, convex functions by Nesterov in \citep{nes17}, given by 
\[
x^{k+1} = x^k - \tau_k \frac{V(x^k + \beta_k d^k) - V(x^k)}{\beta_k} d^k,
\]
where \(d^k\) is randomly, independently drawn from \(S^{n-1}\). The implicit equation \eqref{eq:itoh_abe_method} can be treated as a line search rule for the above method, with constraints imposed by \(\tau_{\min}\), \(\tau_{\max}\).

While our focus is on derivative-free methods, we also mention some popular methods for nonsmooth, nonconvex optimisation that use gradient or subgradient information. Central in nonsmooth optimisation are \emph{bundle methods}, where a \emph{subgradient} \citep{cla90} is required at each iterate to construct a linear approximation to the objective function---see \citep{kiw85} for an introduction. A close alternative to bundle methods are \emph{gradient sampling methods} (see \citep{bur18} for a recent review by Burke et al.), where the descent direction is determined by sampling gradients in a neighborhood of the current iterate. Curtis and Que \citep{cur15} formulated a hybrid method between the gradient sampling scheme of \citep{cur13} and the well known quasi-Newton method BFGS adapted to nonsmooth problems \citep{lew13}. These methods have convergence guarantees in the Clarke subdifferential framework, under the assumption that the objective function is differentiable on an open, dense set. Last, we mention a derivative-free scheme based on gradient sampling methods, proposed by Kiwiel \citep{kiw10}, where gradients are replaced by Gupal's estimates of gradients of the Steklov averages of the objective function. This method has convergence guarantees in the Clarke subdifferential framework, but has a high computational cost in terms of function evaluations per iterate.

\subsection{Contributions}

In this paper, we formulate randomised Itoh--Abe methods for nonsmooth functions. We prove that the method always admits a solution, and that the iterates converge to a set of Clarke stationary points, for any locally Lipschitz continuous function, and both for deterministic and randomly chosen search directions. Consequently, the scope of discrete gradient methods for optimisation is significantly broadened, and we conclude that the dissipativity properties of gradient flow can be preserved even beyond differentiability. Ultimately, this provides a new, robust, and versatile optimisation scheme for nonsmooth, nonconvex functions.

The theoretical convergence analysis for the Itoh--Abe methods is thorough and foundational, and we provide examples that demonstrate that the conditions of the convergence theorem are not just sufficient, but necessary. Furthermore, the statements and proofs are sufficiently general so that they can be adapted to other schemes, such as the aforementioned DFO method, thus enhancing the theory of these methods as well. 

We show that the method works well in practice, by solving bilevel optimisation problems for variational regularisation problems, as well as solving benchmark problems such as Rosenbrock functions.

The rest of the paper is structured as follows. \secref{sec:nonconvex} provides a background on the Clarke sub\-differential for nonsmooth, nonconvex analysis. In \secref{sec:theoretical}, the main theoretical results of the paper are presented, namely existence and optimality results in the stochastic and deterministic setting. In \secref{sec:rotated}, we briefly discuss the Itoh--Abe discrete gradient for general coordinate systems. In \secref{sec:implementation} and \secref{sec:examples}, the numerical implementation is described and results from example problems are presented. Finally, a conclusion is given in \secref{sec:conclusion}.

\section{Nonconvex optimisation}

\label{sec:nonconvex}

In this section, we introduce the Clarke subdifferential framework \citep{cla90} for nonsmooth, nonconvex optimisation. This is the most popular framework for this setting, due to its nice analytical properties. It generalises the gradient of a differentiable function, as well as the subdifferential \citep{eke99} of a convex function, hence the term \emph{Clarke subdifferential}. Francis H. Clarke introduced the framework in his doctoral thesis in 1973 \citep{cla73}, in which he termed it \emph{generalised gradient}.

\subsection{The Clarke subdifferential}

Throughout the paper, for \(\eps > 0\) and \(x \in \RR^n\), we denote by \(B_\eps(x)\) the open ball \(\{y \in \RR^n \; : \; \|y - x\| < \eps \}\).

\begin{defn}
\(V\) is \emph{Lipschitz of rank \(L\) near \(x\)} if there exists \(\eps > 0\) such that for all \(y, z \in B_\eps(x)\), one has
\[
|V(y) - V(z)| \leq L\|y-z\|.
\]
\(V\) is locally Lipschitz continuous if the above property holds for all \(x \in \RR^n\).
\end{defn}

\begin{defn}
For \(V\) Lipschitz near \(x\) and for a vector \(d \in \RR^n\),  the \emph{Clarke directional derivative} is given by
\[
V^o(x ; d) = \limsup_{y \to x, \; \lambda \downarrow 0} \dfrac{V(y+\lambda d) - V(y)}{\lambda}.
\]
\end{defn}
\begin{defn}
\label{defn:subdifferential}
Let \(V\) be locally Lipschitz and \(x \in \RR^n\). The \emph{Clarke subdifferential} of \(V\) at \(x\) is given by
\[
\partial V(x) = \set{p \in \RR^n \; : \; V^o(x; d) \geq \inner{d, p} \mbox{ for all } d \in \RR^n}.
\]
An element of \(\partial V(x)\) is called a \emph{Clarke subgradient}.
\end{defn}
The subdifferential \(\partial V\) is well-defined for locally Lipschitz functions, coincides with the standard subdifferential for convex functions \citep[Proposition 2.2.7]{cla90}, and coincides with the derivative at points of strict differentiability \citep[Proposition 2.2.4]{cla90}. It can equivalently be characterised as
\[
\partial V(x) = \co\set{d \in \RR^n : \exists \, (x^k)_{k \in \NN} \subset \calD(V) \; \mbox{ s.t. } \; x^k \to x \; \mbox{ and } \; \nabla V(x^k) \to d},
\]
where \(\calD(V)\) is the set of differentiable points, and \(\co\) denotes the convex hull of the set \citep[Theorem 2.5.1]{cla90}. We additionally state two useful results, both of which can be found in Chapter 2 of \citep{cla90}.
\begin{prop}
\label{prop:subdiff_properties}
Suppose \(V\) is locally Lipschitz continuous. Then
\begin{enumerate}[(i)]
	\item \(\partial V(x)\) is nonempty, convex and compact, and if \(V\) is Lipschitz of rank \(L\) near \(x\), then \(\partial V(x) \subseteq B_L(0)\).
	\item \(\partial V(x)\) is outer semicontinuous. That is, for all \(\eps > 0\), there exists \(\delta > 0\) such that
	\[
	\partial V(y) \subset \partial V(x) + B_\eps(0), \quad \mbox{ for all } y \in x + B_\delta(0).
	\]
\end{enumerate}
\end{prop}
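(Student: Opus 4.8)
The plan is to follow the classical route from Chapter 2 of Clarke's monograph, building everything on the algebraic and continuity properties of the map $d \mapsto V^o(x;d)$ and its joint behaviour in $(x,d)$.

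First I would establish the basic calculus of the Clarke directional derivative when $V$ is Lipschitz of rank $L$ near $x$: directly from the definition as a $\limsup$ of difference quotients, $d \mapsto V^o(x;d)$ is finite with $|V^o(x;d)| \le L\|d\|$, positively homogeneous, and subadditive, hence a finite sublinear (so convex and continuous) function on $\RR^n$. For part (i), convexity and closedness of $\partial V(x)$ are then immediate, since $\partial V(x) = \bigcap_{d \in \RR^n}\set{p : \inner{d,p} \le V^o(x;d)}$ is an intersection of closed half-spaces; boundedness follows by substituting $d = p$ into the defining inequality, which gives $\|p\|^2 \le V^o(x;p) \le L\|p\|$ and hence $\|p\| \le L$; compactness is then closed-plus-bounded in $\RR^n$. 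Nonemptiness is the one point that needs a separation argument: $\partial V(x)$ is precisely the (convex-analytic) subdifferential at $0$ of the finite sublinear function $g(\cdot) = V^o(x;\cdot)$, and a finite convex function on all of $\RR^n$ is subdifferentiable at every point --- equivalently, $g$ is the support function of the nonempty compact convex set $\partial V(x)$ --- so $\partial V(x) \neq \emptyset$.

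For part (ii), I would argue by contradiction. If outer semicontinuity fails at $x$, there are $\eps > 0$, a sequence $y^k \to x$, and $p^k \in \partial V(y^k)$ with $p^k \notin \partial V(x) + B_\eps(0)$. Since $V$ is Lipschitz of some rank $L$ on a whole neighbourhood of $x$, part (i) gives $\|p^k\| \le L$ for large $k$, so after passing to a subsequence $p^k \to p$, and $p$ lies in the closed complement of $\partial V(x) + B_\eps(0)$, whence $p \notin \partial V(x)$. To reach a contradiction I pass to the limit in the inequality $V^o(y^k;d) \ge \inner{d, p^k}$, valid for each fixed $d$; this is where the key ingredient enters, namely that $(x,d) \mapsto V^o(x;d)$ is upper semicontinuous. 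Granting that, $V^o(x;d) \ge \limsup_k V^o(y^k;d) \ge \inner{d,p}$ for all $d \in \RR^n$, i.e. $p \in \partial V(x)$, contradicting the above.

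The main obstacle --- and the step I would actually write out --- is the upper semicontinuity of $V^o$. Given $(x^k,d^k) \to (x,d)$ with $V^o(x^k;d^k) \to \alpha$, I would, for each $k$, use the definition of the $\limsup$ to pick $z^k$ with $\|z^k - x^k\| \le 1/k$ and $\lambda_k \in (0,1/k]$ such that $\frac{V(z^k + \lambda_k d^k) - V(z^k)}{\lambda_k} \ge V^o(x^k;d^k) - 1/k$; then replace $d^k$ by $d$ at a cost of at most $L\|d^k - d\| \to 0$ by the Lipschitz bound, and note $z^k \to x$, $\lambda_k \downarrow 0$, so $\alpha \le \limsup_k \frac{V(z^k + \lambda_k d) - V(z^k)}{\lambda_k} \le V^o(x;d)$. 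The same Lipschitz-perturbation trick also delivers the subadditivity of $d \mapsto V^o(x;d)$ needed in the first paragraph. Everything else is bookkeeping: intersections of half-spaces, Bolzano--Weierstrass compactness, and the standard fact that finite sublinear functions on $\RR^n$ are support functions of nonempty compact convex sets.
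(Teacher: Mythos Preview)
Your proof is correct and follows exactly the classical route from Chapter~2 of Clarke's monograph. Note, however, that the paper does not actually give its own proof of this proposition: it is stated as a background result with the remark that ``both [...] can be found in Chapter~2 of \cite{cla90}'', so there is nothing to compare against beyond the citation --- and your argument is precisely the Clarke argument being cited.
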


There are alternative frameworks for generalising differentiability of nonsmooth, nonconvex functions. For example, the Michel--Penot subdifferential \citep{mic84} coincides with the G\^{a}teaux derivative when this exists, unlike the Clarke subdifferential, which is larger and only coincides with strict derivatives \citep{gil99}. However, the Clarke sub\-differential is outer semicontinuous, making it in most cases the preferred framework for analysis. See \citep{bor99}  by Borwein and Zhu for a survey of various subdifferentials, published on the 25th birthday of the Clarke subdifferential.

\subsubsection{Discrete gradients versus subgradients}

By definition, when \(V\) is continuously differentiable, any discrete gradient \(\overline{\nabla} V(x,y)\) converges to the gradient \(\nabla V(x)\) as \(y \to x\). However, for nondifferentiable \(V\), discrete gradients do not necessarily approximate a subgradient or even an \emph{\(\eps\)-approximate subgradient}.\footnote{For convex functions, \(p \in \RR^n\) is an \(\eps\)-approximate subgradient if, for all \(y \in \RR^n\), we have \(V(y) \geq V(x) + \inner{p, y-x} - \eps\) \citep{hin01}.} This is demonstrated by the following example.

\begin{ex}
Let \(V(x_1, x_2) := \sqrt{x_1^2 + x_2^2}\), and set \(x^k = [\tfrac{1}{k}, 0]^T\) and \(x_2^k = [0, \tfrac{1}{k}]^T\). Then, for all \(k\), the Itoh--Abe discrete gradient is
\[
\overline{\nabla} V(x_1^k, x_2^k) = [1,1]^T.
\]
Thus, \(x_1^k \to [0,0]^T\), \(x_2^k \to [0,0]^T\) and \(\overline{\nabla} V(x_1^k, x_2^k) \to [1,1]^T\). However, \([1,1]^T\) is not in \(\partial V(0,0) = B_1(0,0)\). In fact, for all \(\eps > 0\), we have \([1,1]^T \notin \partial_\eps V(0,0)\).
\end{ex}

\subsubsection{Clarke stationary points}

\begin{defn}
\(x^* \in \RR^n\) is a \emph{Clarke stationary point} of \(V\) if \(0 \in \partial V(x^*)\).
\end{defn}
For our purposes, we also define Clarke directional stationarity.
\begin{defn}[Directional Clarke stationarity]
For a direction \(d \in \RR^n \setminus \set{0}\), we say that \(V\) is \emph{Clarke directionally stationary at \(x^*\) along \(d\)} if 
\[
\min\set{V^o(x; d), V^o(x; -d)} \geq 0.
\]
\end{defn}
\begin{rem}
A point \(x^*\) is Clarke stationary if and only if \(V\) is Clarke directionally stationary at \(x^*\) along \(d\) for all \(d \in \RR^n\) such that \(\|d\| = 1\).
\end{rem}

Any local maxima and minima are stationary. If \(V\) is convex, then stationary points coincide with the global minima. For more general classes of functions, the concept of Clarke stationary points also reduces to convex first-order optimality conditions.
\begin{defn}[\citep{pen97}]
A locally Lipschitz continuous function \(V\) is \emph{pseudoconvex} if, for all \(x, y \in \RR^n\),
\[
f(y) < f(x) \implies \forall p \in \partial V(x), \quad \inner{p, y-x} < 0.
\]
\end{defn}
If \(V\) is pseudoconvex, then any Clarke stationary point is a global minimum \citep{aus98}. Clarke \citep{cla90} also introduced the notion of \emph{regularity}.
\begin{defn}
A function \(V\) is \emph{regular} at \(x\) if the directional derivative 
\[
V'(x; d) := \lim_{\lambda \downarrow 0} \frac{V(x + \lambda d) - V(x)}{\lambda}
\]
exists for all \(d \in \RR^n\) and \(V^o(x; d) = V'(x; d)\). If this holds for all \(x\), we say that \(V\) is regular.
\end{defn}
For a regular function, a point is Clarke stationary if and only if the directional derivative is nonnegative in all directions. For example, convex functions are regular, and strict differentiability at a point implies regularity at a point. However, for nonregular functions, \(x^*\) can simultaneously be Clarke stationary and have negative directional derivatives in a neighbourhood of directions.

\section{The discrete gradient method for nonsmooth optimisation}
\label{sec:theoretical}

In this section, we present the main theoretical results for the randomised Itoh--Abe methods. In particular, we prove that the update \eqref{eq:itoh_abe_method},
\[
x^{k+1} \mapsto x^k - \tau_{k} \beta_{k} d^{k}, \quad  \mbox{where } \beta_{k} \neq 0 \mbox{ solves } \quad \beta_{k} = - \frac{V(x^k - \tau_{k} \beta_{k} d^{k}) - V(x^k)}{\tau_{k} \beta_{k}},
\]
admits a solution for all \(\tau_k>0\). We also prove under minimal assumptions on \(V\) and \((d^k)_{k\in \NN}\) that the iterates converge to a connected set of Clarke stationary points, both in a stochastic and deterministic setting.

\subsection{Existence result}
\label{sec:existence}

\begin{lem}
\label{lem:existence}
Suppose \(V\) is a continuous function bounded below, and that \(x \in \RR^n\), \(d \in S^{n-1}\) and \(\tau > 0\). Then one of the following statements hold.
\begin{enumerate}[(i)]
	\item There is a \(\beta \neq 0\) that solves \eqref{eq:itoh_abe_method}, i.e. that satisfies \(\tfrac{V(x - \tau\beta d) - V(x)}{\tau\beta} = -\beta\).
	\item \(V\) is Clarke directionally stationary at \(x\) along \(d\).
\end{enumerate}
\end{lem}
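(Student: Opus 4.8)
The plan is to reduce the $n$-dimensional fixed-point equation in (i) to a scalar equation in $\beta$ and settle it with the intermediate value theorem, the point being that the quadratic term in the Itoh--Abe update together with boundedness below of $V$ supplies coercivity. First I would set $\phi(\beta) := V(x - \tau\beta d)$, which is continuous and bounded below, and note that for $\beta \neq 0$ the equation $\tfrac{V(x-\tau\beta d)-V(x)}{\tau\beta} = -\beta$ is, after multiplying through by $\tau\beta$, equivalent to $g(\beta) = 0$ where $g(\beta) := \phi(\beta) + \tau\beta^2 - \phi(0)$. The function $g$ is continuous, $g(0) = 0$, and $g(\beta) \to +\infty$ as $|\beta| \to \infty$ since $\phi$ is bounded below and $\tau > 0$. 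Hence it is enough to exhibit a single $\beta_1 \neq 0$ with $g(\beta_1) < 0$: assuming, say, $\beta_1 < 0$ (the other sign is symmetric), coercivity gives $\beta_2 < \beta_1$ with $g(\beta_2) > 0$, and the intermediate value theorem on $[\beta_2,\beta_1]$ produces $\beta^\ast \in (\beta_2,\beta_1) \subset (-\infty,0)$ with $g(\beta^\ast)=0$, which establishes (i).

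To produce such a $\beta_1$ I would argue by contraposition: assume (ii) fails, so $\min\{V^o(x;d), V^o(x;-d)\} < 0$, and by the symmetry $d \leftrightarrow -d$ assume $V^o(x;d) < 0$. Since the Clarke directional derivative dominates the ordinary upper directional derivative at the base point (take $y \equiv x$ in the $\limsup$ defining $V^o$), one has $\limsup_{\lambda\downarrow 0}\tfrac{V(x+\lambda d)-V(x)}{\lambda} \le V^o(x;d) < 0$, so there are $\lambda_0 > 0$ and $c := -\tfrac12 V^o(x;d) > 0$ with $V(x+\lambda d)-V(x) < -c\lambda$ for all $\lambda \in (0,\lambda_0)$. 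Substituting $\lambda = -\tau\beta$ with $\beta < 0$ (so that $x - \tau\beta d = x + \lambda d$), this becomes $\phi(\beta)-\phi(0) < c\tau\beta$, whence $g(\beta) < c\tau\beta + \tau\beta^2 = \tau\beta(\beta + c)$, which is negative for every $\beta \in (-\min\{\lambda_0/\tau,\,c\},\,0)$. Any such $\beta$ is a valid $\beta_1$, and together with the first paragraph it yields a solution of (i) with $\beta^\ast < 0$; the case $V^o(x;-d) < 0$ is handled in the same way with $\lambda = \tau\beta$, $\beta > 0$, giving $g(\beta) < \tau\beta(\beta - c)$ and a solution with $\beta^\ast > 0$.

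The only delicate point is the bookkeeping of signs in the change of variables relating the step $\lambda$ along $\pm d$ to the Itoh--Abe parameter $\beta$, and checking that it is precisely the \emph{negativity of the Clarke directional derivative} $V^o(x;\pm d)$ --- rather than of an ordinary one-sided derivative, which may fail to exist --- that lets the linear decrease outrun the quadratic penalty $\tau\beta^2$ for small $\beta$. Everything else is continuity, coercivity from $V$ being bounded below, and the intermediate value theorem.
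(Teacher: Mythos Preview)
Your proof is correct and follows essentially the same approach as the paper's: reduce to a scalar equation, use negativity of the Clarke directional derivative to force the auxiliary function below zero for small $|\beta|$, use boundedness below of $V$ for coercivity at infinity, and conclude via the intermediate value theorem. The only cosmetic differences are that you work with $g(\beta)=\phi(\beta)+\tau\beta^2-\phi(0)$ where the paper works with the equivalent ratio $\tfrac{V(x-\tau\beta d)-V(x)}{\tau\beta^2}+1$, and your WLOG choice gives $\beta^\ast<0$ while the paper's gives $\beta^\ast>0$; one minor caveat is that your explicit choice $c=-\tfrac12 V^o(x;d)$ should be replaced by ``some $c>0$ with $V^o(x;d)<-c$'' to cover the case $V^o(x;d)=-\infty$, which can occur since $V$ is only assumed continuous.
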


\begin{proof}
Suppose the second statement does not hold. Then there is \(\eps > 0\) such that 
\[
\min\set{V^o(x; -d), V^o(x; d)} < -\eps,
\]
so assume without loss of generality that \(V^o(x; -d) < - \eps\). By definition of \(V^o\), there is \(\delta > 0\) such that for all \(\beta \in (0, \delta)\),
\[
\frac{V(x - \tau \beta d) - V(x)}{\tau \beta} \leq -\eps/2.
\]
Taking \(\beta \to 0\), we get that
\[
\lim_{\beta \to 0^+} \dfrac{V(x - \tau\beta d) - V(x)}{\tau \beta^2} \leq \lim_{\beta \to 0^+} -\dfrac{\eps}{2\beta} = -\infty,
\]
so there is a \(\beta_1 \in (0, \delta)\) such that 
\[
\frac{V(x - \tau \beta_1 d) - V(x)}{\tau \beta_1^2} < -1.
\]
On the other hand, as \(V\) is bounded below, we have
\[
\liminf_{\beta \to \infty} \dfrac{V(x - \tau \beta_2 d) - V(x)}{\tau \beta_2^2} \geq \lim_{\beta \to \infty} \dfrac{\min\set{0,V(x - \tau \beta_2 d)} - V(x)}{\tau \beta_2^2} = 0.
\]
Thus there is  \(\beta_2\) such that
\[
\dfrac{V(x - \tau \beta_2 d) - V(x)}{\tau \beta_2^2} > -1.
\]
Since the mapping \(\beta \mapsto \frac{V(x - \tau \beta d) - V(x)}{\tau \beta^2}\) is continuous for \(\beta \in (0, \infty)\), we conclude by the intermediate value theorem \citep[Theorem 4.23]{rud76} that there is \(\beta \in (\beta_1, \beta_2)\) that solves the discrete gradient equation
\[
\frac{V(x - \tau \beta d) - V(x)}{\tau \beta^2} = -1.
\]
\end{proof}

\begin{rem}
Note that by the above proof it is straightforward to identify an interval in which a solution to \eqref{eq:itoh_abe_method} exists, allowing for the use of standard root solver algorithms.
\end{rem}

The following lemma, which is an adaptation of \citep[Theorem 1]{gri17} for the nonsmooth setting, summarises some useful properties of the methods.
\begin{lem}
\label{lem:starting_point}
Suppose that \(V\) is continuous, bounded from below and coercive, and let \((x^k)_{k\in\NN}\) be the iterates produced by \eqref{eq:itoh_abe_method}. Then, the following properties hold.
\begin{enumerate}[(i)]
	\item \(V(x^{k+1}) \leq V(x^k)\).
	\item \(\lim_{k \to \infty} \tfrac{V(x^{k+1}) - V(x^k)}{\|x^{k+1} - x^k\|} = 0\).
	\item \(\lim_{k \to \infty} \|x^k - x^{k+1}\| = 0\).
	\item \((x^k)_{k\in\NN}\) has an accumulation point \(x^*\).
\end{enumerate}
\end{lem}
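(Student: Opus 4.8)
The plan is to reduce all four claims to the one-step energy identity built into the method. At step $k$, either no admissible $\beta_k$ exists — in which case $x^{k+1} = x^k$ and, with the convention that the quotient in (ii) is read as $0$, every claim is vacuous at that index — or $\beta_k \neq 0$ solves \eqref{eq:itoh_abe_method}, and multiplying that scalar equation through by $\tau_k \beta_k$ gives, exactly as in \eqref{eq:dissipation_dg},
\[
V(x^{k+1}) - V(x^k) = -\tau_k \beta_k^2 = -\tfrac{1}{\tau_k}\|x^{k+1}-x^k\|^2 ,
\]
using $\|x^{k+1}-x^k\| = \tau_k|\beta_k|$ since $d^k \in S^{n-1}$. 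As $\tau_k > 0$, the right-hand side is nonpositive, which is (i).

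For (ii) and (iii) I would first observe that, by (i), $(V(x^k))_{k\in\NN}$ is nonincreasing and bounded below, hence convergent, so $V(x^k) - V(x^{k+1}) \to 0$. Telescoping the energy identity and using $\tau_k \le \tau_{\max}$,
\[
\frac{1}{\tau_{\max}} \sum_{k=0}^{N} \|x^{k+1}-x^k\|^2 \le \sum_{k=0}^{N} \bigl( V(x^k) - V(x^{k+1}) \bigr) = V(x^0) - V(x^{N+1}) \le V(x^0) - \inf_{\RR^n} V < \infty ,
\]
so $\sum_k \|x^{k+1}-x^k\|^2$ converges; in particular $\|x^{k+1}-x^k\| \to 0$, which is (iii). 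For (ii) I would rewrite the quotient via the energy identity (and use the convention on degenerate steps) as $\bigl| \tfrac{V(x^{k+1}) - V(x^k)}{\|x^{k+1}-x^k\|} \bigr| = \tfrac{1}{\tau_k}\|x^{k+1}-x^k\| \le \tfrac{1}{\tau_{\min}}\|x^{k+1}-x^k\|$, which tends to $0$ by (iii); here it is essential that $\tau_{\min} > 0$.

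For (iv), (i) forces $V(x^k) \le V(x^0)$ for all $k$, so the iterates lie in the sublevel set $\{x \in \RR^n : V(x) \le V(x^0)\}$, which is compact by coercivity; hence $(x^k)_{k\in\NN}$ has a convergent subsequence, and its limit is the desired accumulation point $x^*$.

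None of the steps is deep; the only things to handle with care are the bookkeeping for degenerate steps $x^{k+1} = x^k$ in (ii) (harmless, and consistent with Lemma~\ref{lem:existence}, since such a step occurs only where $V$ is already Clarke directionally stationary at $x^k$ along $d^k$) and the systematic use of the two-sided time-step bound $0 < \tau_{\min} \le \tau_k \le \tau_{\max}$, which is precisely where the standing assumption on $(\tau_k)_{k\in\NN}$ enters. Structurally this mirrors \citep[Theorem 1]{gri17}, the difference being that no differentiability of $V$ is used anywhere.
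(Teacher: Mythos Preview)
Your proof is correct and follows essentially the same approach as the paper: both derive everything from the one-step dissipation identity, telescope to get square-summability of the increments, and use coercivity for the accumulation point. The only cosmetic difference is that the paper proves (ii) by summing $\tau_k$ times the squared quotient directly, whereas you deduce (ii) from (iii) via the pointwise bound $\bigl|\tfrac{V(x^{k+1})-V(x^k)}{\|x^{k+1}-x^k\|}\bigr| \le \tfrac{1}{\tau_{\min}}\|x^{k+1}-x^k\|$; these are equivalent uses of the same identity.
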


\begin{proof}
Property \emph{(i)} follows from the equation \(V(x^{k+1}) - V(x^k) = - \tau_k \beta_k^2\).

Next we show properties \emph{(ii)} and \emph{(iii)}. Since \(V\) is bounded below and \((V(x^k))_{k \in \NN}\) is decreasing, \(V(x^k) \to V^*\) for some limit \(V^*\). Therefore, by \eqref{eq:dissipation_dg}
\begin{align*}
V(x^0) - V^* = \sum_{k = 0}^\infty V(x^k) - V(x^{k+1}) &= \sum_{k = 0}^\infty \tau_k \del{\dfrac{V(x^k) - V(x^{k+1})}{\|x^{k+1} - x^k\|}}^2 \\
&\geq \tau_{\min} \sum_{k = 0}^\infty \del{\dfrac{V(x^k) - V(x^{k+1})}{\|x^{k+1} - x^k\|}}^2.
\end{align*}
Similarly, by \eqref{eq:dissipation_dg}
\begin{align*}
V(x^0) - V^* = \sum_{k = 0}^\infty V(x^k) - V(x^{k+1}) = \sum_{k = 0}^\infty \dfrac{1}{\tau_k} \|x^k - x^{k+1}\|^2 \geq \frac{1}{\tau_{\max}} \sum_{k = 0}^\infty \|x^k - x^{k+1}\|^2.
\end{align*}
We conclude
\[
\lim_{k \to \infty} \frac{V(x^k) - V(x^{k+1})}{\|x^{k+1} - x^k\|} = \lim_{k \to \infty} \|x^{k+1} - x^k\| = 0,
\]
which shows properties \emph{(ii)} and \emph{(iii)}.

Last, we show \emph{(iv)}. Since \((V(x^k))_{k \in \NN}\) is a decreasing sequence, the iterates \((x^k)_{k \in \NN}\) belong to the set \(\left\{x \in \RR^n \; : \; V(x) \leq V(x^0)\right\}\). Therefore, by coercivity of \(V\), the iterates \((x^k)_{k \in \NN}\) are bounded, and admit an accumulation point.
\end{proof}

We denote by \(S\) the limit set of \((x^k)_{k\in \NN}\), which is the set of accumulation points,
\[
S = \set{x^* \in \RR^n \; : \; \exists (x^{k_j})_{j \in \NN} \; \mbox{ s.t. } \; x^{k_j} \to x^*}.
\]
By the above lemma, \(S\) is nonempty. We now prove further properties of the limit set.

\begin{lem}
The limit set \(S\) is compact, connected and has empty interior. Furthermore, \(V\) is constant on \(S\).
\end{lem}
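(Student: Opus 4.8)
The plan is to verify the four assertions one at a time, leaning throughout on the three facts already secured in Lemma~\ref{lem:starting_point}: the iterates satisfy $V(x^{k+1})\le V(x^k)$; they lie in the compact sublevel set $\{x\in\RR^n:V(x)\le V(x^0)\}$, so $(x^k)_{k\in\NN}$ is bounded; and $\|x^{k+1}-x^k\|\to 0$ as $k\to\infty$.

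Compactness and constancy of $V$ come essentially for free. Since $(x^k)_{k\in\NN}$ is bounded, $S$ is bounded, and $S$ is closed because a limit of accumulation points of a sequence is again an accumulation point; hence $S$ is compact. For constancy, recall from the proof of Lemma~\ref{lem:starting_point} that $V(x^k)\to V^\ast$ for some $V^\ast\in\RR$; if $x^\ast\in S$, pick a subsequence $x^{k_j}\to x^\ast$, and continuity of $V$ gives $V(x^\ast)=\lim_j V(x^{k_j})=V^\ast$. Thus $V\equiv V^\ast$ on $S$.

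Connectedness is the classical topological fact that a bounded sequence whose consecutive increments vanish has connected limit set, and it is the most delicate step in terms of bookkeeping with the radii. Suppose, for contradiction, that $S=S_1\cup S_2$ with $S_1,S_2$ nonempty, disjoint and closed (hence compact, as closed subsets of the compact set $S$), and set $3\delta=\dist(S_1,S_2)>0$ and $U_i=\{x:\dist(x,S_i)<\delta\}$. The function $x\mapsto\dist(x,S_1)$ is $1$-Lipschitz, so the real sequence $a_k:=\dist(x^k,S_1)$ has increments tending to $0$; moreover $a_k<\delta$ for infinitely many $k$ (since $S_1\neq\emptyset$) and $a_k>2\delta$ for infinitely many $k$ (since $x^k\in U_2$ implies $\dist(x^k,S_1)\ge\dist(S_1,S_2)-\dist(x^k,S_2)>2\delta$, and $S_2\neq\emptyset$). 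A discrete intermediate value argument then gives $a_k\in[\delta,2\delta]$ for infinitely many $k$; extracting an accumulation point $x^\ast$ of those iterates yields $\dist(x^\ast,S_1)\in[\delta,2\delta]$ and $\dist(x^\ast,S_2)\ge\dist(S_1,S_2)-\dist(x^\ast,S_1)\ge\delta$, so $x^\ast\notin U_1\cup U_2$. But $S=S_1\cup S_2\subseteq U_1\cup U_2$ and $x^\ast\in S$, a contradiction. Hence $S$ is connected.

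The empty-interior claim is the one place where the specific structure of the update \eqref{eq:itoh_abe_method} genuinely enters, and I regard it as the crux. I would argue by contradiction: if $B_\eps(x^\ast)\subseteq S$ for some $\eps>0$, then $V\equiv V^\ast$ on $B_\eps(x^\ast)$ by the constancy step. Since $x^\ast\in S$, infinitely many iterates lie in $B_{\eps/2}(x^\ast)$; for such an index $k$, if the update returns $\beta_k\neq 0$ then, multiplying \eqref{eq:itoh_abe_method} by $\tau_k\beta_k$, we get $\tau_k\beta_k^2=V(x^k)-V(x^{k+1})>0$, so $V(x^{k+1})<V^\ast$ and therefore $x^{k+1}\notin B_\eps(x^\ast)$; combined with $x^k\in B_{\eps/2}(x^\ast)$ this forces $\|x^{k+1}-x^k\|>\eps/2$. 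Since $\|x^{k+1}-x^k\|\to 0$, this is impossible for all large such $k$, so necessarily $x^{k+1}=x^k$. But then $x^{k+1}\in B_{\eps/2}(x^\ast)$ as well, and inductively $x^m=x^k$ for all $m\ge k$: the sequence is eventually constant and $S$ is a single point, contradicting $B_\eps(x^\ast)\subseteq S$. Hence $S$ has empty interior, which completes the proof.
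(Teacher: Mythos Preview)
Your proof is correct and follows essentially the same strategy as the paper's: compactness and constancy are handled identically, connectedness via a contradiction that produces an accumulation point outside the two pieces, and empty interior by showing the sequence becomes eventually constant once an iterate lands in the putative ball. Your connectedness argument is more carefully quantified than the paper's (which simply asserts that infinitely many jumps between two disjoint open sets together with $\|x^{k+1}-x^k\|\to 0$ yield a subsequence outside both), and your empty-interior step takes a small geometric detour---the paper observes more directly that once $V(x^j)=V^\ast$, monotonicity and $V(x^k)\to V^\ast$ force $V(x^k)=V^\ast$, hence $x^k=x^j$, for all $k\ge j$.
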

\begin{proof}
Boundedness of \(S\) follows from coercivity of \(V\) combined with the fact that \(S\) belongs to the level set \(\{x \in \RR^n \; : \; V(x) \leq V(x^0)\}\). Since any accumulation point of \(S\) is also an accumulation point of \((x^k)_{k \in \NN}\), \(S\) is closed. Hence \(S\) is compact.

We prove connectedness by contradiction. Suppose there are two disjoint, nonempty open sets \(A\) and \(B\) such that \(S \subset A \cup B\). The sequence \((x^k)_{k \in \NN}\) jumps between \(A\) and \(B\) infinitely many times and \(\|x^{k+1} - x^k\| \to 0\), which implies that there is a subsequence of \((x^{k_j})_{j \in \NN}\) in \(\RR^n \setminus (A \cup B)\). However, \((x^{k_j})_{j\in\NN}\) is a bounded sequence and has an accumulation point, which must belong in \(\RR^n \setminus (A \cup B)\). This contradicts the assumption that all accumulation points of \((x^k)_{k\in\NN}\) are in \(A \cup B\).

We show that \(S\) has empty interior by contradiction. Suppose \(S\) contains an open ball \(B_\eps(x)\) in \(\RR^n\). Then as \(\|x^{k+1} - x^k\| \to 0\), there is a \(j \in \NN\) such that \(x^j \in B_\eps(x) \subset S\). However, as \(V\) takes the same value on all of \(S\), we deduce that \(V(x^j) = \lim_{k \to \infty} V(x^x)\). Since \((V(x^k))_{k\in\NN}\) is a decreasing sequence, \(V(x^k) = V(x^j)\) for all \(k > j\). It follows from \eqref{eq:dissipation_dg} that \(x^k = x^j\) for all \(k > j\). Therefore, \(S = \{x^j\}\), which contradicts the assumption that \(S\) has nonempty interior.

Last, since \((V(x^k))_{k \in \NN}\) is a decreasing sequence and \(V(x^*) = \lim_{k \to \infty} V(x^k)\) for all \(x^* \in S\), it follows that \(V\) is constant on \(S\).
\end{proof}

\subsection{Optimality result}

We now proceed to the main result of this paper, namely that all points in the limit set \(S\) are Clarke stationary. We consider the stochastic case and the deterministic case separately.

In the stochastic case,  we assume that the directions \((d^k)_{k\in\NN}\) are randomly, independently drawn, and that the support of the probability density of \(\Xi\) is dense in \(S^{n-1}\). It is straightforward to extend the proof to the case where \((d^{nk+1}, \ldots, d^{n(k+1)})\) are drawn as an orthonormal system under the assumptions that the directions \((d^{nk+1})_{k \in \NN}\) are independently drawn from \(S^{n-1}\) and that the support of the density of the corresponding marginal distribution is dense in \(S^{n-1}\).

We define \(X\) to be the set of nonstationary points,
\begin{equation}
\label{eq:set_X}
X = \{x \in \RR^n \; : \;  0 \notin \partial V(x)\}.
\end{equation}

\begin{thm}
\label{thm:main}
Let \((x^k)_{k\in\NN}\) solve \eqref{eq:itoh_abe_method} where \((d^k)_{k\in \NN}\) are independently drawn from the random distribution \(\Xi\), and suppose that the support of the density of \(\Xi\) is dense in \(S^{n-1}\). Then \(\PP(S \cap X \neq \varnothing) = 0\), i.e.  the limit set \(S\) is almost surely in the set of stationary points.
\end{thm}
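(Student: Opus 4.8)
The plan is to argue by contradiction via a covering/Borel--Cantelli argument, exploiting the mechanism already set up in Lemma \ref{lem:starting_point}: the iterates satisfy $\|x^{k+1}-x^k\|\to 0$ and $(V(x^k)-V(x^{k+1}))/\|x^{k+1}-x^k\|\to 0$, and $S$ is compact, connected, with empty interior, with $V$ constant on $S$. Suppose, for contradiction, that with positive probability $S$ contains a nonstationary point $x^*$, i.e. $0\notin\partial V(x^*)$. By outer semicontinuity of $\partial V$ (Proposition \ref{prop:subdiff_properties}(ii)) and since $X$ is open, there is a radius $\rho>0$ and a constant $c>0$ and a unit vector $\hat d$ such that $B_{2\rho}(x^*)\subset X$ and $\langle p,\hat d\rangle \le -c$ for all $p\in\partial V(y)$, all $y\in B_{2\rho}(x^*)$ — i.e. the Clarke subdifferential is uniformly bounded away from the hyperplane orthogonal to $\hat d$ on a whole ball. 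Using the definition of $V^o$ together with the Lebourg mean value theorem (or directly the characterisation of $V^o$), this translates into a uniform descent statement: there is $\lambda_0>0$ such that for all $y\in B_\rho(x^*)$ and all unit $d$ with $\langle d,\hat d\rangle \ge 1/2$ (say), and all $\lambda\in(0,\lambda_0)$, one has $V(y-\lambda d)-V(y)\le -\tfrac{c}{4}\lambda$. The key point is that a full half-ball of directions in $S^{n-1}$ gives such uniform descent from every point of $B_\rho(x^*)$.

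Next I would exploit the density of $\mathrm{supp}(\Xi)$: the set $U=\{d\in S^{n-1}:\langle d,\hat d\rangle > 1/2\}$ is open and nonempty, so $q:=\PP(d^k\in U)>0$ for every $k$, and the $d^k$ are independent. Now fix the event $\{x^*\in S\}$ and choose a subsequence $(x^{k_j})$ with $x^{k_j}\to x^*$. Because $\|x^{k+1}-x^k\|\to0$, for $j$ large the whole run stays in $B_\rho(x^*)$ for a stretch of indices around $k_j$; and among those indices, with probability $1$ infinitely many draws $d^k$ land in $U$ (Borel--Cantelli / second Borel--Cantelli for the independent events $\{d^k\in U\}$ restricted to the indices where $x^k\in B_\rho(x^*)$, of which there are infinitely many on the event $x^*\in S$). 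For such an index $k$: either the step size $\tau_k\beta_k$ realised by \eqref{eq:itoh_abe_method} is at least $\lambda_0$, forcing a decrease $V(x^{k+1})-V(x^k)=-\tau_k\beta_k^2 \le -\tau_{\min}\lambda_0^2/\tau_{\max}^2$ bounded away from zero — impossible since $V(x^k)$ converges; or $\tau_k\beta_k<\lambda_0$, in which case the uniform descent estimate gives $\dfrac{V(x^k)-V(x^{k+1})}{\|x^{k+1}-x^k\|} = \dfrac{V(x^k)-V(x^k-\tau_k\beta_k d^k)}{\tau_k|\beta_k|}\ge \dfrac{c}{4}$, contradicting Lemma \ref{lem:starting_point}(ii), which forces this quotient to $0$. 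Either way we reach a contradiction on an event of positive probability, so $\PP(S\cap X\ne\varnothing)=0$.

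The main obstacle, and the step requiring the most care, is making the "uniform descent on a ball, for a whole cone of directions" statement rigorous from the Clarke data. The definition of $V^o(x;d)$ is a $\limsup$ over $y\to x$ and $\lambda\downarrow0$, so for a single point and direction one only controls difference quotients for $y$ near $x$ and $\lambda$ small; I need this uniformly over $y\in B_\rho(x^*)$ and over $d$ in an open set simultaneously. The clean route is: (a) from $0\notin\partial V(x^*)$ and compactness/convexity of $\partial V(x^*)$, pick $\hat d$ with $\max_{p\in\partial V(x^*)}\langle p,\hat d\rangle =: -2c < 0$; (b) by outer semicontinuity, shrink to a ball $B_{2\rho}(x^*)\subset X$ on which $\max_{p\in\partial V(y)}\langle p,\hat d\rangle \le -c$; (c) use the Lebourg mean value theorem on the segment $[y, y-\lambda d]$ for $y\in B_\rho(x^*)$, $\lambda d$ small, $\langle d,\hat d\rangle$ bounded below: $V(y-\lambda d)-V(y)=\langle p, -\lambda d\rangle$ for some $p\in\partial V(\xi)$, $\xi\in B_{2\rho}(x^*)$, and bound $\langle p,-\lambda d\rangle = \lambda\langle p,-d\rangle$; writing $-d = -\hat d + (\hat d - d)$ and using $\langle p,-\hat d\rangle \ge c$ while $|\langle p, \hat d - d\rangle|\le L\|d-\hat d\|$ (with $L$ a local Lipschitz bound, using $\partial V\subset B_L(0)$), one gets $V(y-\lambda d)-V(y)\le -\lambda(c - L\|d-\hat d\|) \le -\tfrac{c}{2}\lambda$ whenever $\|d-\hat d\|\le c/(2L)$. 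This is exactly the uniform descent with $\lambda_0=\rho$ (so that $y-\lambda d$ stays in the ball) and the cone $U=\{d\in S^{n-1}:\|d-\hat d\|<c/(2L)\}$, which is open and nonempty, hence charged by $\Xi$. (Since the excerpt commented out Lebourg's theorem, one can alternatively derive the one-sided estimate $V(y-\lambda d)-V(y)\le \lambda\, V^o(y;-d) + o(\lambda)$ uniformly via a compactness argument on $\overline{B_\rho(x^*)}\times \overline U$ using upper semicontinuity of $(y,d)\mapsto V^o(y;d)$; I would cite or include this as the one technical lemma.) Everything else is a bookkeeping combination of Lemma \ref{lem:starting_point} and independence of the $d^k$.
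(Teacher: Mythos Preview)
Your overall strategy matches the paper's: establish uniform directional descent on a neighbourhood of any nonstationary point, valid for an open cone of directions, then combine a step-size dichotomy with independence of the $d^k$ to force a contradiction. Your derivation of the uniform descent via outer semicontinuity of $\partial V$ together with the Lebourg mean value theorem is a correct alternative to the paper's direct argument from the definition of $V^o$ plus local Lipschitz continuity in the direction variable; the step-size dichotomy is also essentially the paper's (the paper bounds the decrease from below by a fixed $\mu$ in both branches, you invoke Lemma~\ref{lem:starting_point}\,\emph{(ii)} in the small-step branch --- both work).

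There is, however, a genuine gap in the probabilistic step, and it is not where you locate the main difficulty. You fix a \emph{random} accumulation point $x^*$ on the positive-probability event $\{S\cap X\ne\varnothing\}$, and then $\hat d$, $\rho$, $c$, and in particular the cone $U$, all depend on $x^*$ and hence on the entire realisation $(d^k)_{k\in\NN}$. Consequently the ``events'' $\{d^k\in U\}$ do not have a fixed probability $q>0$; they are neither independent of one another nor of the filtration generated by $(d^0,\ldots,d^{k-1})$, because $U$ itself is determined by the future. Neither the second Borel--Cantelli lemma nor a conditional/stopping-time variant applies as written. The paper resolves this by first constructing a \emph{deterministic} countable cover $X\subset\bigcup_{i}B_{\delta_i}(y^i)$, each ball carrying its own deterministic direction $d^i$ and parameters $\eps_i,\delta_i$ for which the uniform descent bound holds; one then proves $\PP\bigl(S\cap B_{\delta_i}(y^i)\ne\varnothing\bigr)=0$ for each fixed $i$ (now $B_{\delta_i}(d^i)$ is a deterministic set with $\PP(d^k\in B_{\delta_i}(d^i))=q_i>0$ a genuine constant independent of $k$), and concludes by countable subadditivity. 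You mention ``covering'' in your opening sentence but never deploy it --- this is precisely the missing ingredient, and it, rather than the uniform descent estimate, is the step requiring the most care.
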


\begin{proof}
We will construct a countable collection of open sets \((B_j)_{j \in \NN}\), such that \(X \subset \bigcup_{j \in \NN} B_j\) and so that for all \(j \in \NN\) we have \(\PP(S \cap B_j \neq \varnothing) = 0\). Then the result follows from countable additivity of probability measures.

First, we show that for every \(x \in X\), there is \(d \in S^{n-1}\), \(\eps > 0\), and \(\delta > 0\) such that
\begin{equation}
\label{eq:property}
\frac{V(y - \lambda e) - V(y)}{\lambda} \leq -\eps, \quad \forall y \in B_{\delta}(x), \; e \in B_\delta(d) \cap S^{n-1}, \; \lambda \in (0, \delta).
\end{equation}
To show this, note that if \(x \in X\), then by definition there is \(d \in S^{n-1}\) and \(\eps > 0\) such that
\[
V^o(x; -d) = \limsup_{\substack{y \to x \\ \lambda \downarrow 0}} \frac{V(y - \lambda d) - V(y)}{\lambda} \leq -\eps.
\]
Therefore, there is \(\eta > 0\) such that for all \(\lambda \in (0, \eta)\) and all \(y \in B_\eta(x)\), we have
\[
\frac{V(y - \lambda d) - V(y)}{\lambda} \leq -\eps/2.
\]
As \(V\) is Lipschitz continuous around \(B_\eta(x)\), it is clear that the mapping
\[
e \mapsto \frac{V(y - \lambda e) - V(y)}{\lambda},
\]
is also locally Lipschitz continuous (of the same rank). It follows that there exists \(\delta \in (0, \eta)\) such that for all \(y \in B_{\delta}(x)\), all \(e \in B_\delta(d)\cap S^{n-1}\), and all \(\lambda \in (0, \delta)\), we have
\[
\frac{V(y - \lambda e) - V(y)}{\lambda} \leq -\eps/3.
\]
This concludes the first part.

Next, for \(m \in \NN \), we define the set
\[
X_m = \set{x \in X \; : \; \eqref{eq:property} \mbox{ holds for some }  d\in S^{n-1}, \eps > 0 \mbox{ and all } \delta < 1/m \;}.
\]
Clearly
\[
X = \bigcup_{m \in \NN} X_m.
\]
Let \((y^{i})_{i \in \NN}\) be a dense sequence in \(X_m\), which exists because \(\QQ^n\) is both countable and dense in \(\RR^n\). We define \(Y_i^{(m)} = B_{\delta}(y^{i})\), where \(\delta = \tfrac{1}{m+1}\). Therefore,
\[
X_m \subset \bigcup_{i \in \NN} Y_i^{(m)} \quad \implies \quad  X \subset \bigcup_{m \in \NN} \bigcup_{i \in \NN} Y_i^{(m)}.
\]
Since a countable union of countable sets is countable, we conclude with the following statement. For each \(i \in \NN\) there is \(y^i \in \RR^n\), \(\eps_i, \delta_i > 0\), and \(d^i \in S^{n-1}\), such that for all \(z \in B_{\delta_i}(y^i)\), all \(e \in B_{\delta_i}(d^i) \cap S^{n-1}\), and all \(\lambda \in (0, \delta_i)\), we have
\[
\frac{V(z - \lambda e) - V(z)}{\lambda} \leq -\eps_i,
\]
and such that
\begin{equation}
\label{eq:cover_X}
X \subset \bigcup_{i \in \NN} B_{\delta_i}(y^i).
\end{equation}

Finally, we show that for each \(i \in \NN\), almost surely, \(S \cap B_{\delta_i}(y^i) = \varnothing\). For a given \(i\), write \(B_i := B_{\delta_i}(y^i)\), and define \(m := \min_{x \in B_j}V(x)\), \(M := \max_{x \in B_j}V(x)\). We argue accordingly: The existence of an accumulation point of \((x^k)_{k \in \NN}\) in \(B_i\) would imply that there is a subsequence \((x^{k_j})_{j \in \NN} \subset B_i\). Suppose \(x^{k_j} \in B_i\) and \(d^{k_j} \in B_{\delta_i}(d^i)\), so that \(x^{k_j+1} = x^{k_j} - \lambda d^{k_j}\) for some \(\lambda > 0\). If \(\lambda < \delta_i\), then 
\[
V(x^{k_j} - \lambda d^{k_j}) - V(x^{k_j}) \leq -\eps_i \lambda = -\eps_i \|x^{k_j+1} - x^{k_j}\|.
\]
However,
\[
V(x^{k_j+1}) - V(x^{k_j}) = - \frac{1}{\tau_{k_j}} \|x^{k_j+1} - x^{k_j}\|^2,
\]
so, combining these equations, we get
\[
\eps_i \tau_{k_j} \leq \|x^{k_j+1} - x^{k_j}\|.
\]
This in return implies
\[
V(x^{k_j}) - V(x^{k_j+1}) \geq \eps_i^2 \tau_{\min}.
\]
On the other hand, if \(\lambda \geq \delta_i\), then
\[
V(x^{k_j}) - V(x^{k_j+1}) \geq \frac{\delta_i^2}{\tau_{\max}}.
\]
Setting \(\mu = \min\set{\eps_i^2 \tau_{\min}, \tfrac{\delta_i^2}{\tau_{\max} }}\), it follows that whenever \(x^{k_j} \in B_i\) and \(d^{k_j} \in B_{\delta_i}(d^i)\), then
\[
V(x^{k_j}) - V(x^{k_j+1}) \geq \mu.
\] 
Choosing \(K \in \NN\) such that \(K \mu > M - m\), we know that this event only has to occur \(K\) times for \((x^{k_j})_{j \in \NN}\) to leave \(B_i\). In other words, almost surely, there is no subsequence \((x^{k_j})_{j\in\NN} \subset B_i\). This concludes the proof.
\end{proof}

\subsubsection{Deterministic case}

We now cover the deterministic case, in which \((d^k)_{k\in\NN}\) is required to be \emph{cyclically dense}.
\begin{defn}
\label{defn:cyclically_dense}
A sequence \((d^k)_{k \in \NN} \subset S^{n-1}\) is \emph{cyclically dense} in \(S^{n-1}\) if, for all \(\eps > 0\), there is \(N \in \NN\) such that for any \(k \in \NN\), the set \(\set{d^{k}, \ldots, d^{k+N-1}}\) forms an \(\eps\)-cover of \(S^{n-1}\),
\[
S^{n-1} \subset \bigcup_{i = k+1}^{k+N-1} B_\eps(d^i).
\]
\end{defn}
\begin{rem}
Randomly drawn sequences are almost surely not cyclically dense, hence the separate treatment of the stochastic and deterministic methods.
\end{rem}
Many constructions of dense sequences are also cyclically dense. We provide an example of such a sequence on the unit interval \([0,1]\).
\begin{ex}
Let \(\sigma \in (0,1)\) be an irrational number and define the sequence \((\lambda_k)_{k \in \NN}\) in \([0,1]\) by
\[
\lambda_k = (\sigma k) \quad (\Mod \;\; 1) = \sigma k - \left \lfloor{\sigma k}\right \rfloor,
\]
where \(\left \lfloor{\sigma k}\right \rfloor\) denotes the largest integer less than or equal to \(\sigma k\).

To see that \((\lambda_k)_{k \in \NN}\) is cyclically dense in \([0,1]\), set \(\eps > 0\) and note by sequential compactness of \([0,1]\) that there is \(k, r \in \NN\) such that \(|\lambda_{k} - \lambda_{k+r}| < \eps\). We can write \(\delta = |\lambda_k - \lambda_{k+r}| > 0\), where we know that \(\delta\) is strictly positive, as no value can be repeated in the sequence due to \(\sigma\) being irrational. By modular arithmetic, we have for any \(l \in \NN\),
\[
\lambda_{k+rl} = \lambda_k + l\delta \quad (\Mod \;\; 1).
\]
In other words, the subsequence \((\lambda_{k+rl})_{l \in \NN}\) moves in increments of \(\delta< \eps\) on \([0,1]\). Setting \(N = r \left \lceil{\tfrac{1}{\delta}}\right \rceil + k\),  where \(\left \lceil{\delta}\right \rceil\) denotes the smallest integer greater than or equal to \(\delta\),  it is clear that for any \(j \in \NN\), the set \(\{\lambda_{j}, \lambda_{j+1}, \ldots, \lambda_{j+N-1}\}\) forms an \(\eps\)-cover of \([0,1]\).

One could naturally extend this construction to higher dimensions \([0,1]^n\), by choosing \(n\) irrational numbers such that the ratio of any two of these numbers is also irrational.
\end{ex}

\begin{thm}
Let \((x^k)_{k\in\NN}\) solve \eqref{eq:itoh_abe_method}, where \((d^k)_{k \in \NN}\) are cyclically dense. Then all accumulation points \(x^* \in S\) satisfy \(0 \in \partial V(x^*)\).
\end{thm}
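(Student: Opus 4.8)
The plan is to argue by contradiction, transplanting the covering-and-decrease argument of Theorem~\ref{thm:main} to the deterministic setting. There, a ``good'' search direction at a nonstationary point is drawn with positive probability and so recurs almost surely; here that role is played by cyclic density (Definition~\ref{defn:cyclically_dense}), which guarantees that every sufficiently long block of $(d^k)_{k\in\NN}$ contains a good direction. So I would suppose, for contradiction, that some accumulation point $x^* \in S$ satisfies $x^* \in X$, i.e.\ $0 \notin \partial V(x^*)$.

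The first part of the proof of Theorem~\ref{thm:main} --- which uses only local Lipschitz continuity and $x^* \in X$ --- supplies $d \in S^{n-1}$, $\eps > 0$ and $\delta > 0$ for which \eqref{eq:property} holds at $x^*$. Since \eqref{eq:property} precludes Clarke directional stationarity of $V$ at any $y \in B_\delta(x^*)$ along any direction in $B_\delta(d)$, Lemma~\ref{lem:existence} ensures that the update \eqref{eq:itoh_abe_method} produces $x^{k+1} \neq x^k$ whenever $x^k \in B_\delta(x^*)$ and $d^k \in B_\delta(d) \cap S^{n-1}$. Repeating verbatim the estimate at the end of the proof of Theorem~\ref{thm:main} --- distinguishing whether $\|x^{k+1} - x^k\|$ is below or above $\delta$, and combining $V(x^k) - V(x^{k+1}) = \tfrac{1}{\tau_k}\|x^{k+1} - x^k\|^2$ with \eqref{eq:property} --- I then obtain a constant $\mu := \min\{\eps^2 \tau_{\min},\, \delta^2/\tau_{\max}\} > 0$ such that $V(x^k) - V(x^{k+1}) \ge \mu$ whenever $x^k \in B_\delta(x^*)$ and $d^k \in B_\delta(d) \cap S^{n-1}$.

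Finally I would bring in cyclic density together with Lemma~\ref{lem:starting_point}. Choose $N \in \NN$ so that every block $\{d^k, \dots, d^{k+N-1}\}$ is a $\delta$-cover of $S^{n-1}$, and hence meets $B_\delta(d)$. By Lemma~\ref{lem:starting_point}(iii) one has $\|x^{k+1} - x^k\| \to 0$, so there is $K_0$ with $\|x^{k+1} - x^k\| < \delta/(2N)$ for all $k \ge K_0$; consequently $x^k \in B_{\delta/2}(x^*)$ with $k \ge K_0$ forces $x^k, x^{k+1}, \dots, x^{k+N} \in B_\delta(x^*)$. As $x^*$ is an accumulation point, $x^k \in B_{\delta/2}(x^*)$ for infinitely many $k \ge K_0$, and for each such $k$ the block $\{d^k, \dots, d^{k+N-1}\}$ contains some $d^{k'}$ with $d^{k'} \in B_\delta(d)$ and $x^{k'} \in B_\delta(x^*)$, so $V(x^{k'}) - V(x^{k'+1}) \ge \mu$ by the previous step. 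The indices $k'$ obtained this way tend to infinity, so $V$ drops by at least $\mu$ at infinitely many distinct steps; but $\sum_{k \ge 0}\bigl(V(x^k) - V(x^{k+1})\bigr)$ telescopes to $V(x^0) - \lim_k V(x^k) < \infty$, a contradiction. Hence $S \cap X = \varnothing$, which is the claim. I expect the crux to be this last step: with no recurrent positive-probability event at hand, one must exploit the quantitative decay $\|x^{k+1}-x^k\| \to 0$ to keep the iterates inside a neighbourhood of $x^*$ for an entire $\delta$-covering block of directions, and this is precisely where cyclic density of $(d^k)_{k\in\NN}$ --- rather than mere density --- is indispensable.
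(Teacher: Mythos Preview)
Your proposal is correct and follows essentially the same approach as the paper: argue by contradiction, use local Lipschitz continuity to obtain a neighbourhood of the nonstationary accumulation point and a cone of good directions along which a uniform decrease $\mu$ is guaranteed, then combine $\|x^{k+1}-x^k\|\to 0$ with cyclic density to ensure that every sufficiently late visit to this neighbourhood is accompanied by a good direction within $N$ steps, forcing infinitely many drops of size $\mu$.

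One difference worth noting: the paper invokes the full countable cover $X \subset \bigcup_i B_{\delta_i}(y^i)$ from Theorem~\ref{thm:main} and argues that $x^*$ cannot lie in any $B_{\delta_i}(y^i)$. You instead work directly with the single point $x^*$ and its neighbourhood $B_\delta(x^*)$, which is simpler and entirely sufficient here---the countable cover is genuinely needed in the stochastic case (to bound $\PP(S\cap X\neq\varnothing)$ via countable subadditivity) but is superfluous in the deterministic case, where one only needs to rule out a single fixed accumulation point. Your quantitative handling of the final step (choosing $K_0$ with $\|x^{k+1}-x^k\|<\delta/(2N)$ so that an entire block of $N$ iterates stays in $B_\delta(x^*)$, and observing that the resulting indices $k'$ tend to infinity and hence are infinitely many distinct) is also more explicit than the paper's ``this would happen arbitrarily many times''.
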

\begin{proof}
We consider the setup in the proof to \thmref{thm:main}, where \(X\) is the set of nonstationary points \eqref{eq:set_X} and is covered by a countable collection of open balls \eqref{eq:cover_X},
\[
X \subset \bigcup_{i \in \NN} B_{\delta_i}(y^i).
\]
We will show that an accumulation point \(x^* \in S\) cannot belong to the ball \(B_{\delta_i}(y^i)\), from which it follows that \(S\) is a subset of the set of stationary points. For contradiction, suppose that there is a subsequence \((x^{k_j})_{j \in \NN} \to x^* \in B_{\delta_i}(y^i)\).  By \lemref{lem:starting_point} \emph{(iii)}, since \(\|x^k - x^{k+1}\| \to 0\) as \(k \to \infty\), we deduce that for any \(N \in \NN\), there is \(j \in \NN\) such that
\[
\{ x^{k_j}, x^{k_j+1},  \ldots, x^{k_j+N-1}\} \subset B_{\delta_i}(y^i).
\]
By cyclical density, we can choose \(N\) such that the corresponding directions \(\{ d^{k_j}, d^{k_j+1},  \ldots, d^{k_j+N-1} \}\) form an \(\eps_i\)-cover of \(S^{n-1}\). Therefore, there exists \(x^{k} \in B_{\delta_i}(y^i)\) and \(d^k \in B_{\delta_i}(e^i)\), so we can argue as in \thmref{thm:main}, that
\[
V(x^{k}) -  V(x^{k+1}) \geq \mu,
\]
where \(\mu = \min\set{\eps_i^2 \tau_{\min}, \tfrac{\delta_i^2}{\tau_{\max} }}\). If \((x^{k_j})_{j \in \NN}\) had a limit in \(B_{\delta_i}(y^i)\), this would happen arbitrarily many times, which is a contradiction. This concludes the proof.
\end{proof}

\subsection{Necessity of search density and Lipschitz continuity}
\label{sec:direction_necessity}

For nonsmooth problems, it is necessary to employ a set of directions \((d^k)_{k \in \NN}\) larger than the set of basis coordinates \(\{e^1,\ldots, e^n\}\). To see this, consider the function \(V(x,y) = \max\set{x,y}\) and the starting point \(x^0 = [1,1]^T\). With the standard Itoh--Abe discrete gradient method, the iterates would remain at \(x^0\), even though this point is nonstationary.

We show with a simple example that the assumption of density of \((d^k)_{k \in \NN}\) in \thmref{thm:main} is not only sufficient, but also necessary.
\begin{ex}
\label{ex:narrow_descent}
We suppose \(V: \RR^2 \to \RR\) is defined by \(V(x_1,x_2) = |x_1| + N |y_2|\) for some \(N \in \NN\), and set \(x^0 = [-1,0]^T\). For \(\theta \in [-\pi/2, \pi/2]\), let \(d = [\cos\theta, \sin\theta]^T\). Then \(-d\) is a direction of descent if and only if \(\theta \in (-\arctan(1/N), \arctan(1/N))\). This interval can be made arbitrarily small by choosing \(N\) to be sufficiently large. Therefore, for an Itoh--Abe method to descend from \(x^0\) for arbitrary functions, the directions \((d^k)_{k \in \NN}\) need to include a convergent subsequence to the direction \([1,0]^T\). As this direction is arbitrary, we deduce that \((d^k)_{k \in \NN}\) must be dense.
\end{ex}

\thmref{thm:main} also assumes that \(V\) is locally Lipschitz continuous. We briefly discuss why this assumption is necessary, and provide an example to show that for functions that are merely continuous, the theorem no longer holds.

By Proposition 2.1.1. (b) in \citep{cla90}, the mapping \((y, d) \mapsto V^o(y; d)\) is upper semicontinuous for \(y\) in a neighbourhood of \(x\), due to the local Lipschitz continuity of \(V\) near \(x\). That is,
\[
V^o(y^*; d^*) \geq \limsup_{y \to y^*, d \to d^*} V^o(y; d).
\]
This property is crucial for the convergence analysis of Itoh--Abe methods, as it implies
\[
V^o(x^*, d^*) \geq \limsup_{k \in \NN} V^o(x^k; d^k) = 0.
\]
Without local Lipschitz continuity, it is possible to have
\[
x^k \to x^*, \quad d^k \to d^*, \; \mbox{and } V^o(x^k; d^k) \to 0, \qquad \mbox{but } V^o(x^*; d^*) < 0.
\]
In this case, there is no guarantee that the limit \(x^*\) is Clarke stationary. We demonstrate this with an example.
\begin{ex}
We will first state the iterates \((x^k)_{k\in\NN}\) and then construct a function \(V:\RR^2 \to \RR\) that fits these iterates. Let \((d^k)_{k \in \NN}\) be a cyclically dense sequence in \(S^{1}\) and assume without loss of generality that \([0, 1]^T \notin (d^k)_{k \in \NN}\). Replacing \(d^k\) with \(-d^k\) does not change the step in \eqref{eq:itoh_abe_method}, so we assume that \(d^k_1 < 0\) for all \(k\). We set \(x^0 = [0,0]^T\) and define \((x^k)_{k\in \NN}\) and \((V(x^k))_{k\in\NN}\) to be
\[
x^{k+1} = x^k - \frac{1}{(k+1)^2} d^k, \qquad V(x^{k+1}) = V(x^k) - \frac{1}{(k+1)^4}, \qquad V(x^0) = 0.
\]
Since \(\sum_{k \in \NN} \|x^k - x^{k+1}\| < \infty\), it follows that \(x^k\) converges to some limit \(x^*\), and \(V(x^k)\) clearly decreases to a limit \(V^* \in \RR\). Furthermore, these steps satisfy \eqref{eq:itoh_abe_method} with \(\tau_k = 1\). We then define \(V\) on the line segments \([x^k, x^{k+1}] = \{\lambda x^k + (1-\lambda)x^{k+1} \; : \; \lambda \in [0,1]\}\) by interpolating linearly from \((x^k, V(x^k))\) to \((x^{k+1}, V(x^{k+1}))\).

Next, we define \(V\) on \(\RR^2\) as a function that linearly decreases everywhere in the direction \([0,1]^T\), and so that its value is consistent with the values given on the predefined line segments \([x^k, x^{k+1}]\). Note that this is a well-defined and continuous function, since each line in the direction \([0,1]^T\) crosses at most one point on at most one line segment, due to our assumptions on \((d^k)_{k\in\NN}\).

We conclude the example by noting that the limit \(x^*\) is not Clarke stationary---in fact, no point is Clarke stationary---since \(V^o(x; [0, 1]^T) = -1\) for all \(x\).
\end{ex}

\subsection{Nonsmooth, nonconvex functions with further regularity}

For a large class of nonsmooth optimisation problems (convex and nonconvex), the objective function is sufficiently regular so that the standard Itoh--Abe discrete gradient method is also guaranteed to converge to Clarke stationary points.  These are functions \(V\) for which \(x^* \in \RR^n\) is Clarke stationary if and only if \(V^o(x^*; e^i) \geq 0\) for \(i = 1, \ldots, n\). One example is functions of the form
\[
V(x) = E(x) + \lambda \|x\|_1,
\]
where \(E\) is a continuously differentiable function that may be nonconvex, \(\|x\|_1\) denotes \(|x_1| + \ldots + |x_n|\), and \(\lambda > 0\). See for example Proposition 2.3.3 and the subsequent corollary in \citep{cla90}, combined with the fact that the nonsmooth component of \(V\), i.e. \(\|\cdot\|_1\), separates into \(n\) coordinate-wise scalar functions. This implies that the Clarke subdifferential is given by
\[
\partial V(x) = \{\nabla E(x)\} + \lambda \prod_{i=1}^n \sgn(x_i),
\]
where \(\prod\) denotes the Cartesian product and
\[
\sgn(x_i) := \begin{cases}
\{1\}, \quad &\mbox{if } x_i > 0, \\
\{-1\}, \quad &\mbox{if } x_i < 0, \\
[-1,1], \quad &\mbox{if } x_i = 0.
\end{cases}
\]

Since this paper is chiefly concerned with the blackbox setting where no particular structure of \(V\) is assumed, we do not include a rigorous analysis of the convergence properties of the standard Itoh--Abe discrete gradient method for functions of the above form. However, we point out that for nonsmooth, nonconvex optimisation problems where Clarke stationarity is equivalent to Clarke directional stationarity along the standard coordinates, one can adapt \thmref{thm:main} in a straightforward manner to prove that the iterates converge to a set of Clarke stationary points when the directions \((d^k)_{k \in \NN}\) are drawn from the standard coordinates \((e^i)_{i=1}^n\).

Furthermore, one could drop the requirement that \(V\) is locally Lipschitz continuous, and replace \(\|x\|_1\) with \(\|x\|_p^p\), where \(p \in (0,1)\), and \(\|x\|_p^p = |x_1|^p + \ldots + |x_n|^p\). This too is beyond the scope of this paper.

\section{Rotated Itoh--Abe discrete gradients}
\label{sec:rotated}

We briefly discuss a randomised Itoh--Abe method that retains the Itoh--Abe discrete gradient structure, by ensuring that the directions \((d^{kn+1}, d^{kn+2}, \ldots, d^{k(n+1)})\) are orthonormal. Equivalently, we consider each block of \(n\) directions to be independently drawn from a random distribution on the set of orthogonal transformations on \(\RR^n\), denoted by \(\mbox{O}(n)\).
\begin{defn}
The \emph{orthogonal group of dimension \(n\)}, \(\mbox{O}(n)\), is the set of orthogonal matrices in \(\RR^n\), i.e. matrices \(R\) which satisfy \(R^{-1} = R^T\). Equivalently, \(R\) maps one orthonormal basis of \(\RR^n\) to another.
\end{defn}
Each element of \(\mbox{O}(n)\) corresponds to a \emph{rotated} Itoh--Abe discrete gradient.
\begin{defn}[Rotated Itoh--Abe discrete gradient]
Suppose \(R \in \mbox{O}(n)\) maps the basis \((e^i)_{i =1}^n\) to another orthonormal basis \((f^i)_{i =1}^n\), i.e. \(Rf^i = e^i\). For continuously differentiable functions \(V\), the \emph{rotated Itoh--Abe discrete gradient}, denoted by \(\overline{\nabla}_R V\), is given by
\[
\overline{\nabla}_R V(x, y) = R^T \hat \nabla_R V(x,y),
\]
where
\[
\left(\hat \nabla_R V(x, y)\right)_i := \dfrac{V\del{x + \sum_{j=1}^i \inner{y-x, f^j}f^j} - V\del{x + \sum_{j=1}^{i-1} \inner{y-x, f^j}f^j}}{\inner{y-x, f^i}}.
\]
\end{defn}

It is straightforward to check that it is a discrete gradient, as defined for continuously differentiable functions \(V\).
\begin{prop}
\(\overline{\nabla}_R V\) is a discrete gradient.
\end{prop}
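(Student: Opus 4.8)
The plan is to reduce the claim to the already-established properties of the \emph{standard} Itoh--Abe discrete gradient by means of an orthogonal change of variables. Set $W := V \circ R^T$, which is continuously differentiable because $V$ is and $R^T$ is linear, and write $\tilde x := Rx$, $\tilde y := Ry$. The first step is to record the two elementary identities that follow from $R^{-1} = R^T$ and $Rf^i = e^i$ (equivalently $R^T e^i = f^i$): for each index $j$ one has $\langle y-x, f^j\rangle = \langle R(y-x), e^j\rangle = \tilde y_j - \tilde x_j$, and, abbreviating $z^i := (\tilde y_1, \dots, \tilde y_i, \tilde x_{i+1}, \dots, \tilde x_n)$,
\[
x + \sum_{j=1}^i \langle y-x, f^j\rangle f^j \;=\; R^T z^i .
\]
Substituting both identities into the definition of $\hat\nabla_R V$ gives $\bigl(\hat\nabla_R V(x,y)\bigr)_i = \bigl(W(z^i) - W(z^{i-1})\bigr)/(\tilde y_i - \tilde x_i)$, i.e.\ $\hat\nabla_R V(x,y) = \overline{\nabla} W(\tilde x, \tilde y)$ where $\overline{\nabla} W$ is the ordinary Itoh--Abe discrete gradient, and therefore $\overline{\nabla}_R V(x,y) = R^T \overline{\nabla} W(Rx, Ry)$.

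Granting this representation, consistency is a telescoping computation: since $z^0 = \tilde x$ and $z^n = \tilde y$,
\[
\langle \overline{\nabla}_R V(x,y), y-x\rangle = \langle \overline{\nabla} W(\tilde x, \tilde y), \tilde y - \tilde x\rangle = \sum_{i=1}^n \bigl(W(z^i) - W(z^{i-1})\bigr) = W(\tilde y) - W(\tilde x) = V(y) - V(x),
\]
using $R^T \tilde x = x$ and $R^T \tilde y = y$ at the last step. For the mean value property, as $y \to x$ we have $\tilde y \to \tilde x$, and the standard Itoh--Abe discrete gradient satisfies $\overline{\nabla} W(\tilde x, \tilde y) \to \nabla W(\tilde x)$; by the chain rule $\nabla W(\tilde x) = R\,\nabla V(R^T\tilde x) = R\,\nabla V(x)$, so $\overline{\nabla}_R V(x,y) = R^T \overline{\nabla} W(\tilde x,\tilde y) \to R^T R\,\nabla V(x) = \nabla V(x)$. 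Continuity of $(x,y)\mapsto \overline{\nabla}_R V(x,y)$ then follows by composing the continuous map $(\tilde x,\tilde y)\mapsto \overline{\nabla} W(\tilde x,\tilde y)$ with the linear maps $x\mapsto Rx$, $y\mapsto Ry$ and $R^T$.

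The one point requiring a little care is the continuity of $\overline{\nabla} W$ across the hyperplanes $\tilde x_i = \tilde y_i$, where the $i$-th difference quotient looks singular; there it extends continuously to the partial derivative $\partial_i W$ evaluated at $z^{i-1} = z^i$, which is exactly the coordinate-wise mean value property. I would either cite this standard fact about the Itoh--Abe discrete gradient or insert the short removable-singularity argument. This is essentially the only obstacle; everything else is bookkeeping with $R$ versus $R^T$.
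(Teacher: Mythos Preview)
Your proof is correct. The approach differs from the paper's in presentation rather than substance: the paper verifies the two defining properties directly, expanding $\langle R^T\hat\nabla_R V(x,y),\,y-x\rangle = \langle \hat\nabla_R V(x,y),\,R(y-x)\rangle$ and telescoping, then declaring the limit property ``immediate''. You instead introduce the auxiliary function $W = V\circ R^T$ and the change of variables $\tilde x = Rx$, $\tilde y = Ry$, thereby identifying $\overline{\nabla}_R V(x,y)$ with $R^T\overline{\nabla} W(Rx,Ry)$ and reducing both properties to the already-known ones for the standard Itoh--Abe discrete gradient. The underlying telescoping computation is the same either way; what your route buys is a cleaner conceptual picture (the rotated construction is literally the standard one seen in rotated coordinates) and, notably, an explicit treatment of continuity across the diagonals $\tilde x_i = \tilde y_i$, which is part of the definition of a discrete gradient but is not addressed in the paper's proof.
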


\begin{proof}
For any \(x, y \in \RR^n\), \(x \neq y\),
\begin{align*}
\inner{\overline{\nabla}_R V(x,y&), y - x} \\
&= \inner{R^T \hat \nabla_R V(x,y), y - x} \\
&= \inner{\hat \nabla_R V(x,y), R(y - x)} \\
&= \sum_{i = 1}^n  \dfrac{V\del{x + \sum_{j=1}^i \inner{y-x, f^j}f^j} - V\del{x + \sum_{j=1}^{i-1} \inner{y-x, f^j}f^j}}{\inner{y-x, f^i}} \cdot \inner{y-x, f^i} \\
&= \sum_{i = 1}^n  V\del{x + \sum_{j=1}^i \inner{y-x, f^j}f^j} - V\del{x + \sum_{j=1}^{i-1} \inner{y-x, f^j}f^j} \\
&= V(y) - V(x).
\end{align*}
The convergence property \(\lim_{y \to x} \overline{\nabla}_R V(x,y) = \nabla V(x)\) is immediate, providing \(V\) is continously differentiable.
\end{proof}

Thus, we can implement schemes that are formally discrete gradient methods, and also fulfill the convergence theorem in \secref{sec:theoretical}.

\section{Numerical implementation}

\label{sec:implementation}

We consider three ways of choosing \((d^k)_{k \in \NN}\).
\begin{enumerate}
	\item \emph{Standard Itoh--Abe method.} The directions cycle through the standard coordinates, with the rule \(d^k = e^{[(k - 1) \Mod n] + 1}\). Performing \(n\) steps of this method is equivalent to one step with the standard Itoh--Abe discrete gradient method. 
	\item \emph{Random pursuit.} The directions are independently drawn from a random distribution \(\Xi\) on \(S^{n-1}\). We assume that the support of the density of \(\Xi\) is dense in \(S^{n-1}\).
	\item \emph{Rotated Itoh--Abe method.} For each \(k \in \NN\), the block of \(n\) directions \((d^{kn+1}, d^{kn+2}, \ldots, d^{(k+1)n})\) is drawn from a random distribution on \(\mbox{O}(n)\), the orthogonal group of dimension \(n\). In other words, the directions form an orthonormal basis. This retains the discrete gradient structure of the standard Itoh--Abe discrete gradient method. We assume that each draw from \(\mbox{O}(n)\) is independent, and, for notational continuity, we denote by \(\Xi\) the marginal distribution of \(d^{kn+1}\) on \(S^{n-1}\), and again assume that the support of the density is dense in \(S^{n-1}\).
\end{enumerate}

We formalise an implementation of randomised Itoh--Abe methods with two algorithms, an inner and an outer one. \algoref{algo:inner} is the inner algorithm and solves \eqref{eq:itoh_abe_method} for \(x^{k+1}\), given \(x^k\), \(d^k\) and time step bounds \(\tau_{\min}, \tau_{\max}\).  \algoref{algo:outer} is the outer algorithm, which calls the inner algorithm for each iterate \(x^{k}\), and provides a stopping rule for the methods. The stopping rule in \algoref{algo:outer}  takes two positive integers \(K\) and \(M\) as parameters, such that the algorithm stops either after \(K\) iterations, or when the iterates have not sufficiently decreased \(V\) in the last \(M\) iterations. We typically set \(M \approx n\), \(n\) being the dimension of the domain, unless the function \(V\) is expected to be highly irregular or nonsmooth, in which case we choose a larger \(M\), as directions are generally prone to yield insufficient decrease. This stopping rule can be replaced by any other heuristic.

\algoref{algo:inner} is a tailormade scalar solver for \eqref{eq:itoh_abe_method} that balances the tradeoff between optimally decreasing \(V\) given constraints \(\tau_{\min}, \tau_{\max}\) and using minimal function evaluations. Rather than solving for a given \(\tau_k\), it ensures that there exists \(\tau_k \in [\tau_{\min}, \tau_{\max}]\) that matches the output \(x^{k+1}\). It requires a preliminary \(\tau \in [\tau_{\min}, \tau_{\max}]\), which we heuristically chose as \(\tau = \sqrt{\tau_{\min} \tau_{\max}}\). This method is particularly suitable when \(\tau_{\min} \ll \tau_{\max}\), and can be replaced by any other scalar root finder algorithm. 

\textcolor{red}{The randomised Itoh--Abe methods have been implemented on Python and will be made available on GITHUB upon acceptance of this manuscript.}

\begin{algorithm}[!ht]
\caption{Randomised Itoh--Abe method with solver and stopping criterion}
    \textbf{Input:}
        starting point \(x^0\),
        directions $(d^k)_{k \in \NN}$,
        time step bounds $(\tau_{\min}, \tau_{\max})$,
        tolerance for function reduction $\eta$,
        maximal number of iterations $K$,
        maximal number of consecutive directions without descent before stopping $M$,
        internal solver described by \algoref{algo:inner}.
     \textbf{Initialise:} counter \(m = 0\).
\begin{algorithmic}
\algrule
    \For{$k=0,\ldots,K-1$}
		\State Update \(x^{k+1} \mapsfrom (x^k, d^k, \tau_{\min}, \tau_{\max})\) via \algoref{algo:inner}
		\If{\(V(x^{k}) - V(x^{k+1}) \leq \eta\)}
			\State \(m = m+1\)
		\Else
			\State \(m = 0\)
		\EndIf
		\If{\(m \geq M\)}
			\State Terminate
		\EndIf
  \EndFor
\end{algorithmic}
\label{algo:outer}
\end{algorithm}

\begin{algorithm}[!ht]
\caption{Solver for Itoh--Abe step \eqref{eq:itoh_abe_method}}
    \textbf{Input:}
        current point \(x\),
        direction $d$,
        time step upper bound $\tau_{\max}$, 
        time step lower bound $\tau_{\min}$,
        predicted time step \(\tau = \sqrt{\tau_{\min} \tau_{\max}}\),
        tolerance for \(x\), $\eps$,
        scalar \(\sigma \in (0,1)\).
\begin{algorithmic}
\algrule
    \If{\(V(x + \eps d) \geq V(x)\)}
		\State \(d = - d\)
		\If{\(V(x + \eps d) \geq V(x)\)}
			\State \Return \(x\) (stationary along \(d\))
		\EndIf
    \EndIf
    \State 
    Solve for \(\beta\) assuming linear extrapolation of \(V\) and with predicted \(\tau\) (assume for simplicity \(\beta > \eps\)):
    \begin{align*} 
         \beta &= - \frac{V(x + \eps d) - V(x)}{\eps \tau} \\
    		x^0 &= x, \quad x_1 = x + \eps d, \quad x_2 = x + \beta d
    \end{align*}
    \While{\(V\) is concave between \(x^0\), \(x^1\) and \(x^2\) (meaning \(\tfrac{V(x^2) - V(x^1)}{x^2 - x^1} \leq \tfrac{V(x^1) - V(x^0)}{x^1 - x^0}\))}
    		\State \(\beta = \beta/\sigma\), \(x^2 = x + \beta d\).
    	\EndWhile
    	\State Do step of parabolic interpolation (see \citep[Section 6.2.2]{hea02}) between \(x^0\), \(x^1\) and \(x^2\), i.e.
    	\[
    	y = x^1 - \frac{1}{2}\frac{(x^1-x^0)^2 (V(x^1) - V(x^2)) - (x^1 -  x^2)^2 (V(x^1) - V(x^0)) }{ (x^1-  x^0)(V(x^1) - V(x^2)) - (x^1 - x^2)(V(x^1)-V(x^0))}
    	\]
    	\While{Parabolic step has not decreased \(V\)}
		\State Update parabolic interpolation points \(x^i\), \(i = 0, 1, 2\).
    	\EndWhile
    	\State \(y = x^i\) is optimal point from parabolic interpolation step
    	\While{\(\frac{|V(y) - V(x)|}{\|y-x\|^2} \notin \sbr{1/\tau_{\max}, 1/\tau_{\min}}\)}
    		\If{\(\frac{|V(y) - V(x)|}{\|y-x\|^2} > 1/\tau_{\min}\)}
    			\State \(y = y/\sigma\)
    		\Else
			\State \(y = \sigma y\)
    		\EndIf
    	\EndWhile
	\State \Return \(y\)
    	\end{algorithmic}
\label{algo:inner}
\end{algorithm}

\section{Examples}
\label{sec:examples}

In this section, we use the randomised Itoh--Abe methods to solve several nonsmooth, nonconvex problems. In \secref{sec:rosenbrock}, we consider some well known optimisation challenges developed by Rosenbrock and Nesterov. In \secref{sec:bilevel}, we solve bilevel optimisation of parameters in variational regularisation problems.\footnote{Test images are taken from the Berkeley database \citep{mar01}. Available online:  \url{https://www2.eecs.berkeley.edu/Research/Projects/CS/vision/bsds/BSDS300/html/dataset/images.html}.}

We compare our method to state-of-the-art derivative-free optimisation methods Py-BOBYQA \citep{car18, pow09} and the LT-MADS solver provided by NOMAD \citep{aud06, led09, led11}. For purposes of comparing results across solvers for these problems, we do not measure objective function value against iterates, but objective function value against function evaluations.

\subsection{Rosenbrock functions}
\label{sec:rosenbrock}

We consider the well-known Rosenbrock function \citep{ros60}
\begin{equation}
\label{eq:rosenbrock}
V(x,y) = (1-x)^2 + 100(y-x^2)^2.
\end{equation}
Its global minimiser \([1,1]^T\) is located in a narrow, curved valley, which is challenging for the iterates to navigate. We compare the three variants of the Itoh--Abe method, for which we set the algorithm parameters \(\eps = 10^{-5}\), \(\tau_{\min} = 10^{-4}\), \(\tau_{\max} = 10^2\), \(\eta = 10^{-9}\), and \(M = 30\). See \figref{fig:rosenbrock} for the numerical results. All three methods converge to the global minimiser, which shows that the Itoh--Abe methods are robust. Unsurprisingly, the random pursuit method and the rotated Itoh--Abe method, which descend in varying directions, perform significantly better than the standard Itoh--Abe method.

\begin{figuretmp}
\begin{center}
\begin{tikzpicture}
\draw (0,0) node[inner sep = 0] {\includegraphics[height=5cm]{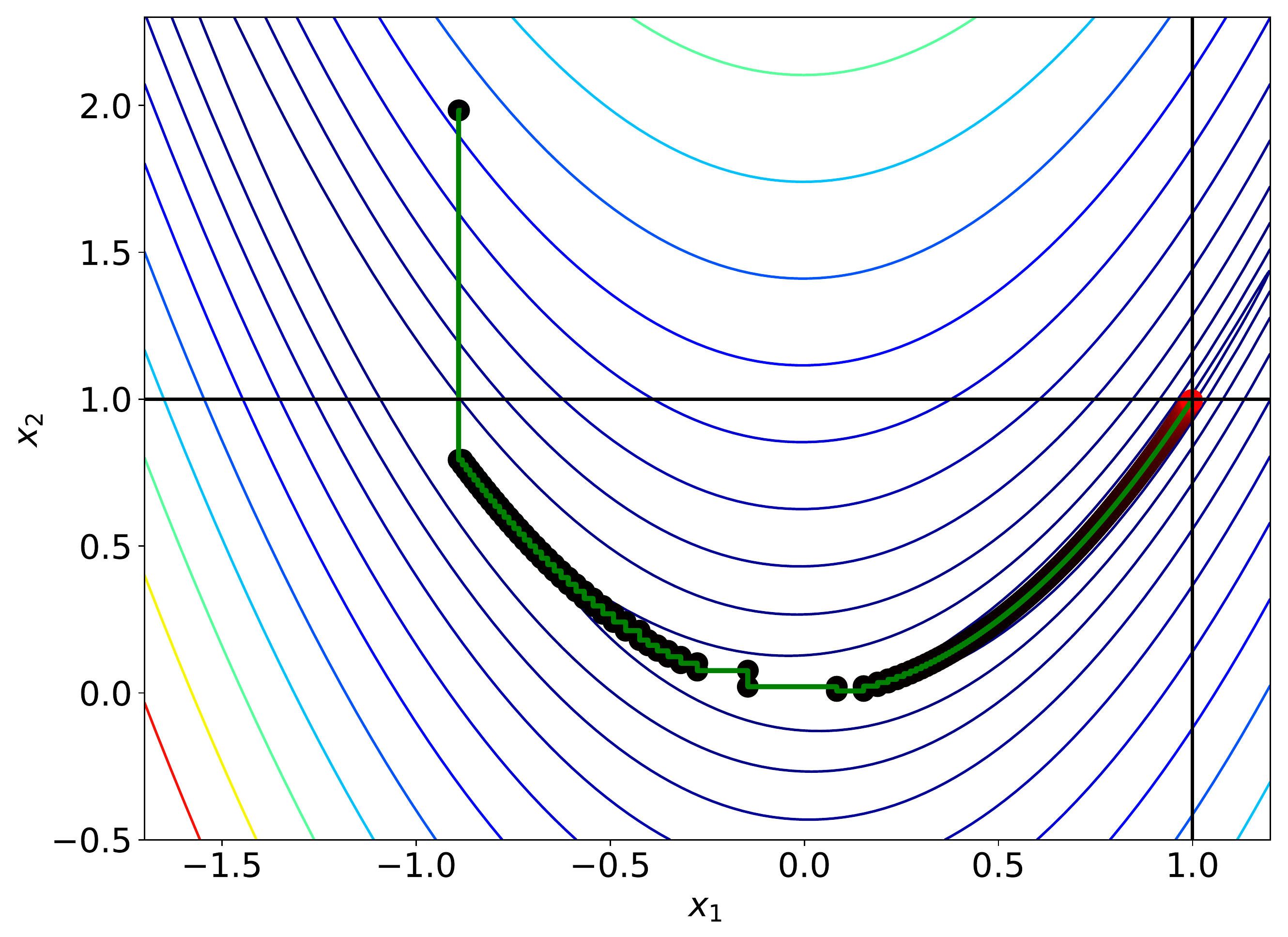}};
\node[draw, fill = white] at (-1.5,-1.7) {\scriptsize Standard Itoh--Abe} ;
\end{tikzpicture}
\begin{tikzpicture}
\draw (0,0) node[inner sep = 0] {\includegraphics[height=5cm]{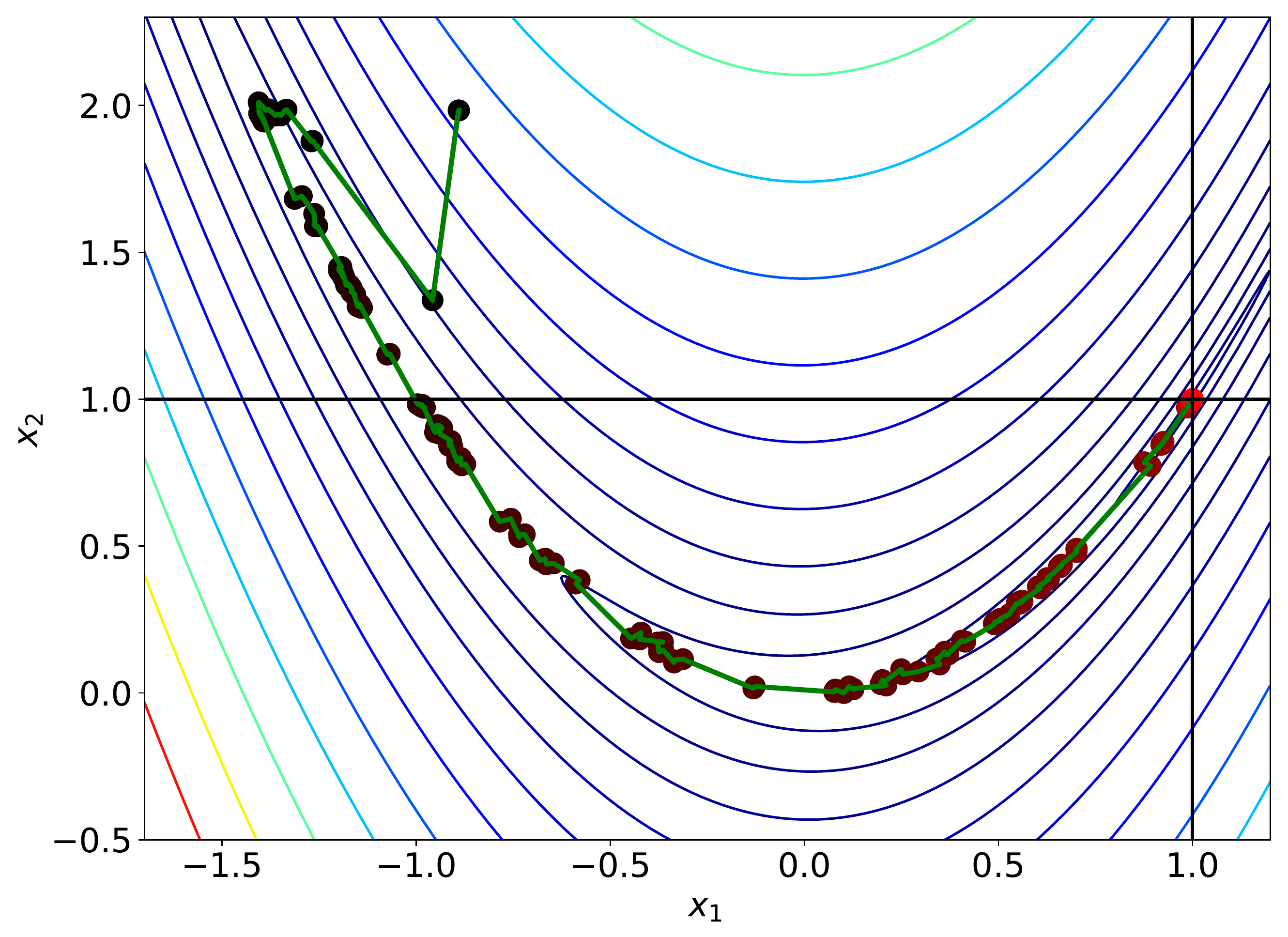}};
\node[draw, fill = white] at (-1.5,-1.7) {\scriptsize Rotated Itoh--Abe} ;
\end{tikzpicture}
\begin{tikzpicture}
\draw (0,0) node[inner sep = 0] {\includegraphics[height=5cm]{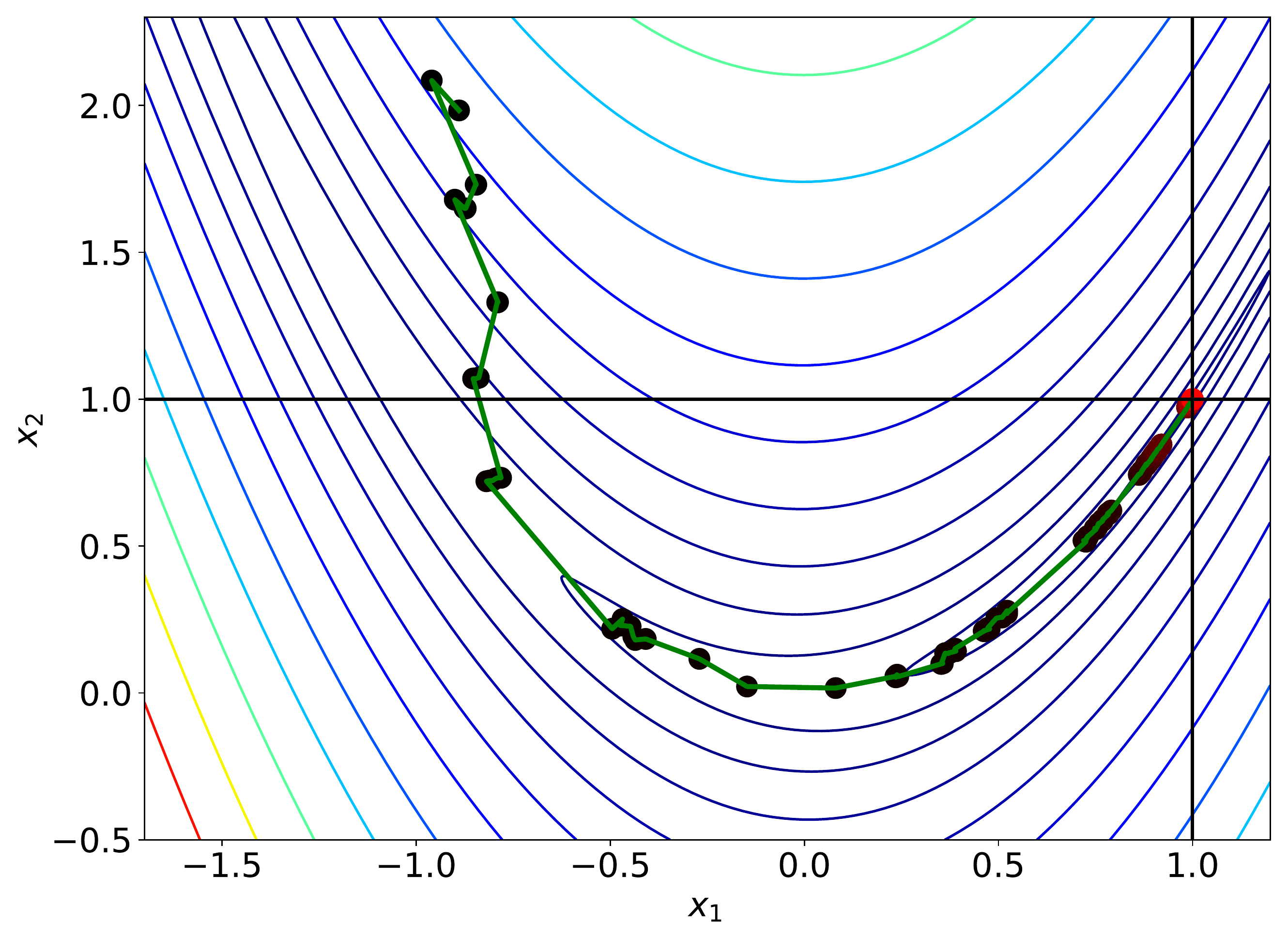}};
\node[draw, fill = white] at (-1.12,-1.7) {{\scriptsize Random pursuit Itoh--Abe}} ;
\end{tikzpicture}
\includegraphics[height=5cm]{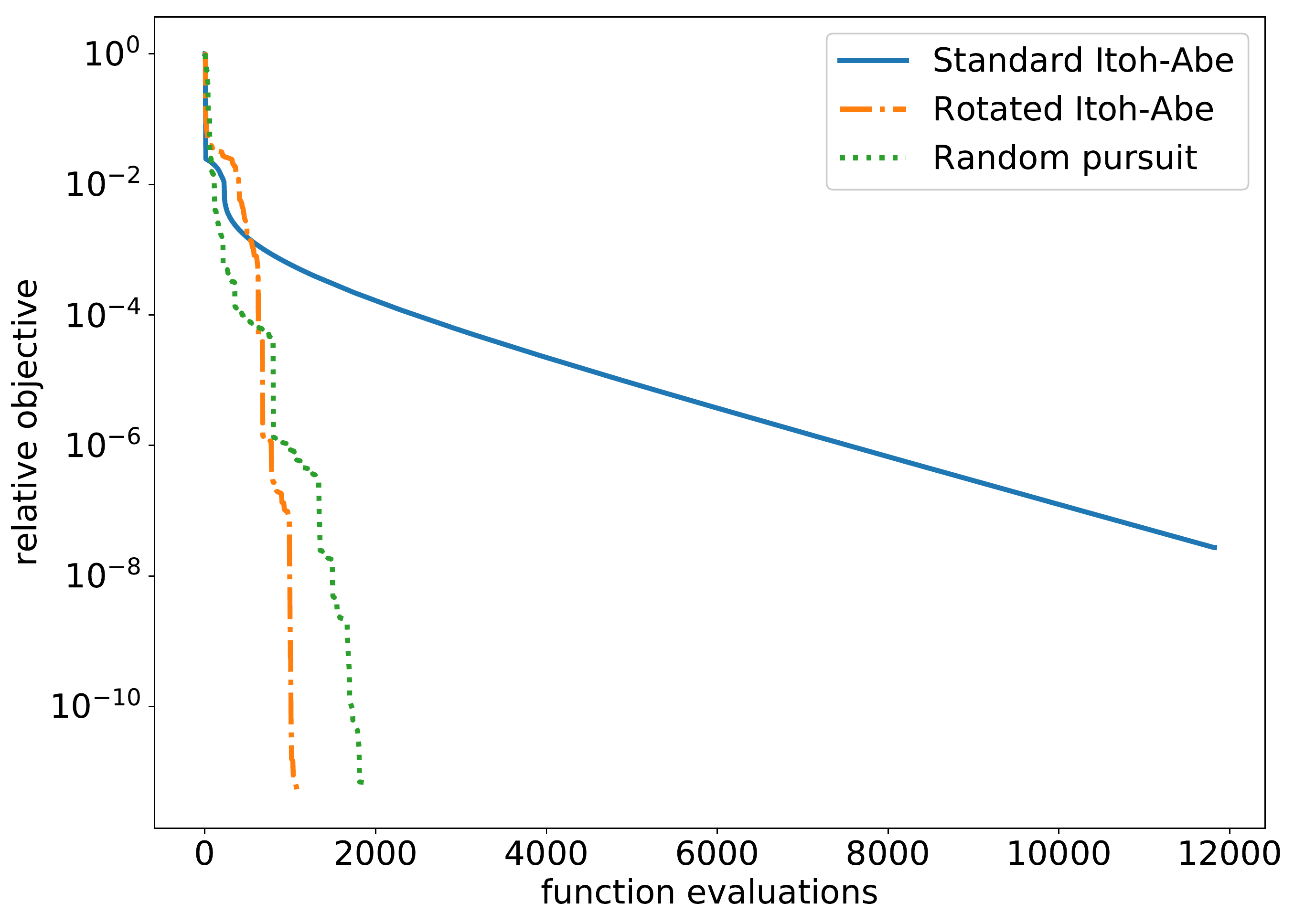}
\end{center}
\caption{Comparison of three variants of the Itoh--Abe method applied to the Rosenbrock function. Top left: Itoh--Abe method with standard frame. Top right: Rotated Itoh--Abe method. Bottom left: Itoh--Abe method with random pursuit. Bottom right: Convergence rates of the relative objective \(\frac{V(x^k) - V^*}{V(x^0) - V^*}\) for the three variants, displayed with a log--log plot.}
\label{fig:rosenbrock}
\end{figuretmp}

We also consider a nonsmooth variant of \eqref{eq:rosenbrock}, termed Nesterov's (second) nonsmooth Chebyshev--Rosenbrock function \citep{gur12},
\begin{equation}
\label{eq:nesterov}
V(x,y) = \frac{1}{4} |x-1| + \envert{y - 2|x| + 1}.
\end{equation}
In this case too, the global minimiser \([1,1]^T\) is located along a narrow path. Furthermore, there is a nonminimising, stationary point at \([0, -1]^T\), which is nonregular---i.e. it has negative directional derivatives).

We also compare the three Itoh--Abe methods for this example, and set the algorithm parameters \(\eps = 10^{-10}\), \(\tau_{\min} = 10^{-4}\), \(\tau_{\max} = 10^2\), \(\eta = 10^{-16}\), and \(M = 100\). See \figref{fig:nesterov_dg} for the results from this. As can be seen, the standard Itoh--Abe discrete gradient method is not suitable for the irregular paths and nonsmooth kinks of the objective function, and stagnates early on. The two randomised Itoh--Abe methods perform better, as they descend in varying directions. For the remaining 2D problems in this paper, we will consider the rotated Itoh--Abe method, although we could just as well have used the random pursuit method. For higher-dimensional problems, we recommend the random pursuit method.

\begin{figuretmp}
\begin{center}
\begin{tikzpicture}
\draw (0,0) node[inner sep = 0] {\includegraphics[height=5cm]{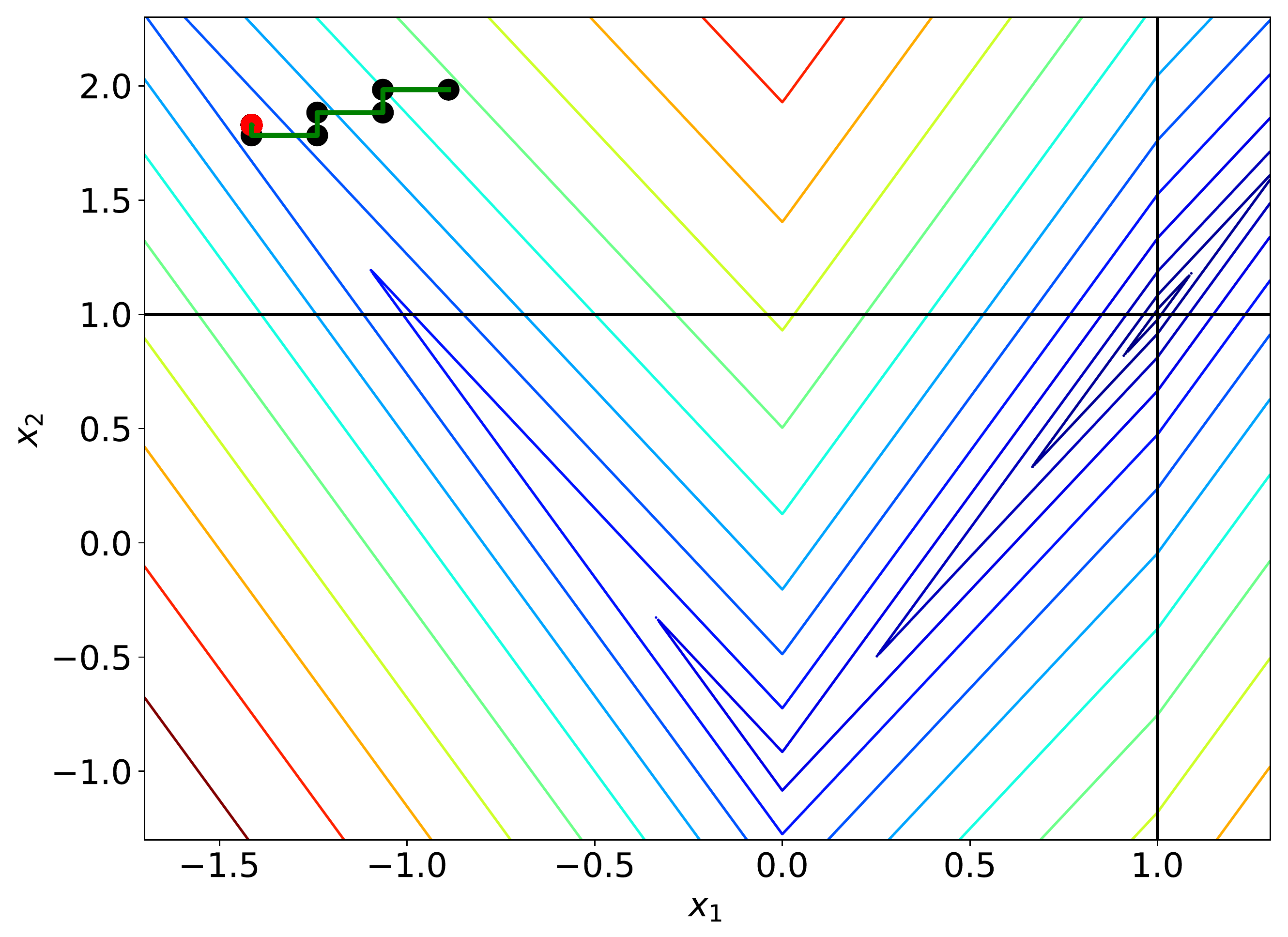}};
\node[draw, fill = white] at (2.2,2.1) {\scriptsize Standard Itoh--Abe} ;
\end{tikzpicture}
\begin{tikzpicture}
\draw (0,0) node[inner sep = 0] {\includegraphics[height=5cm]{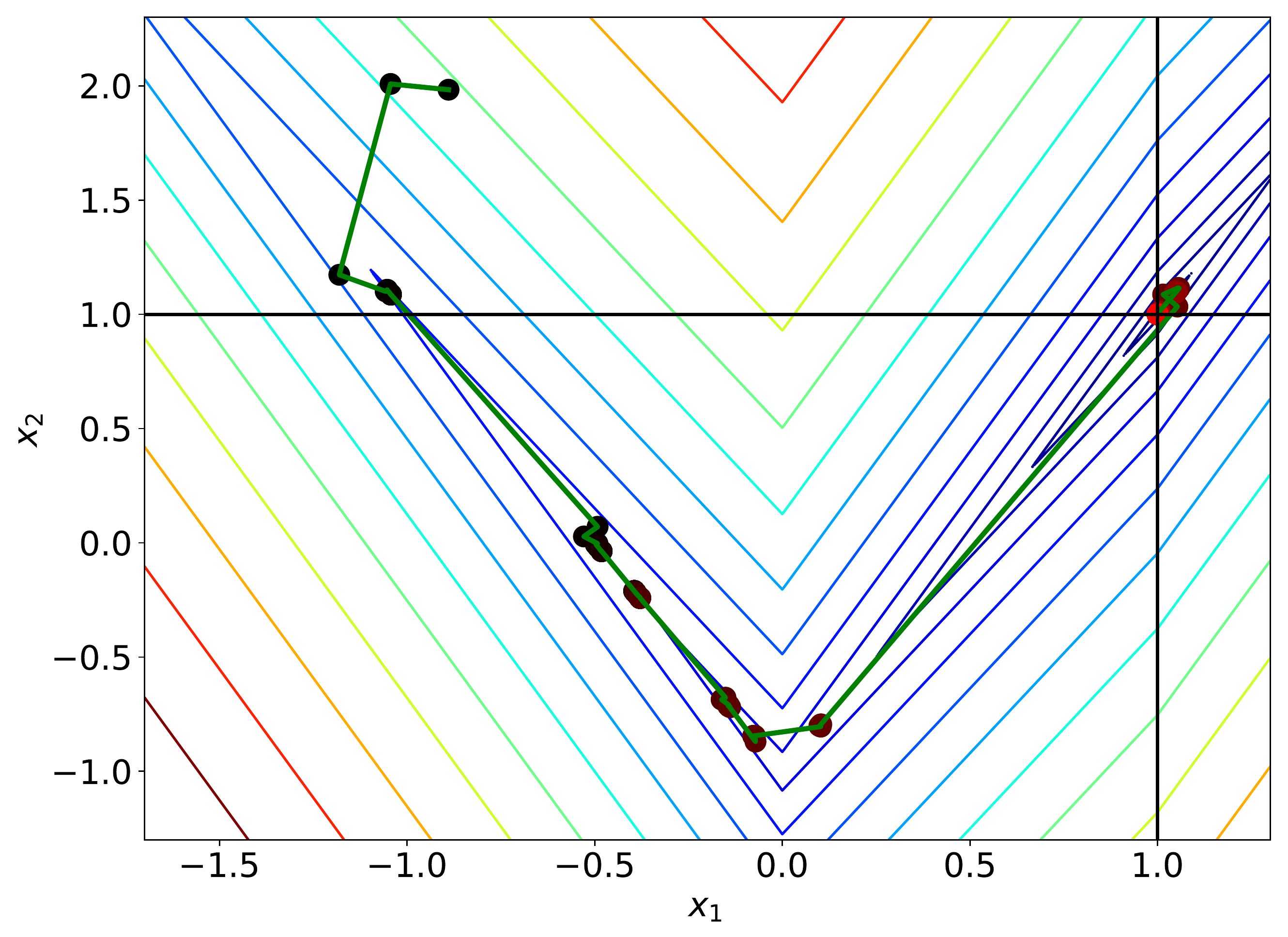}};
\node[draw, fill = white] at (2.25,2.1) {\scriptsize Rotated Itoh--Abe} ;
\end{tikzpicture}
\begin{tikzpicture}
\draw (0,0) node[inner sep = 0] {\includegraphics[height=5cm]{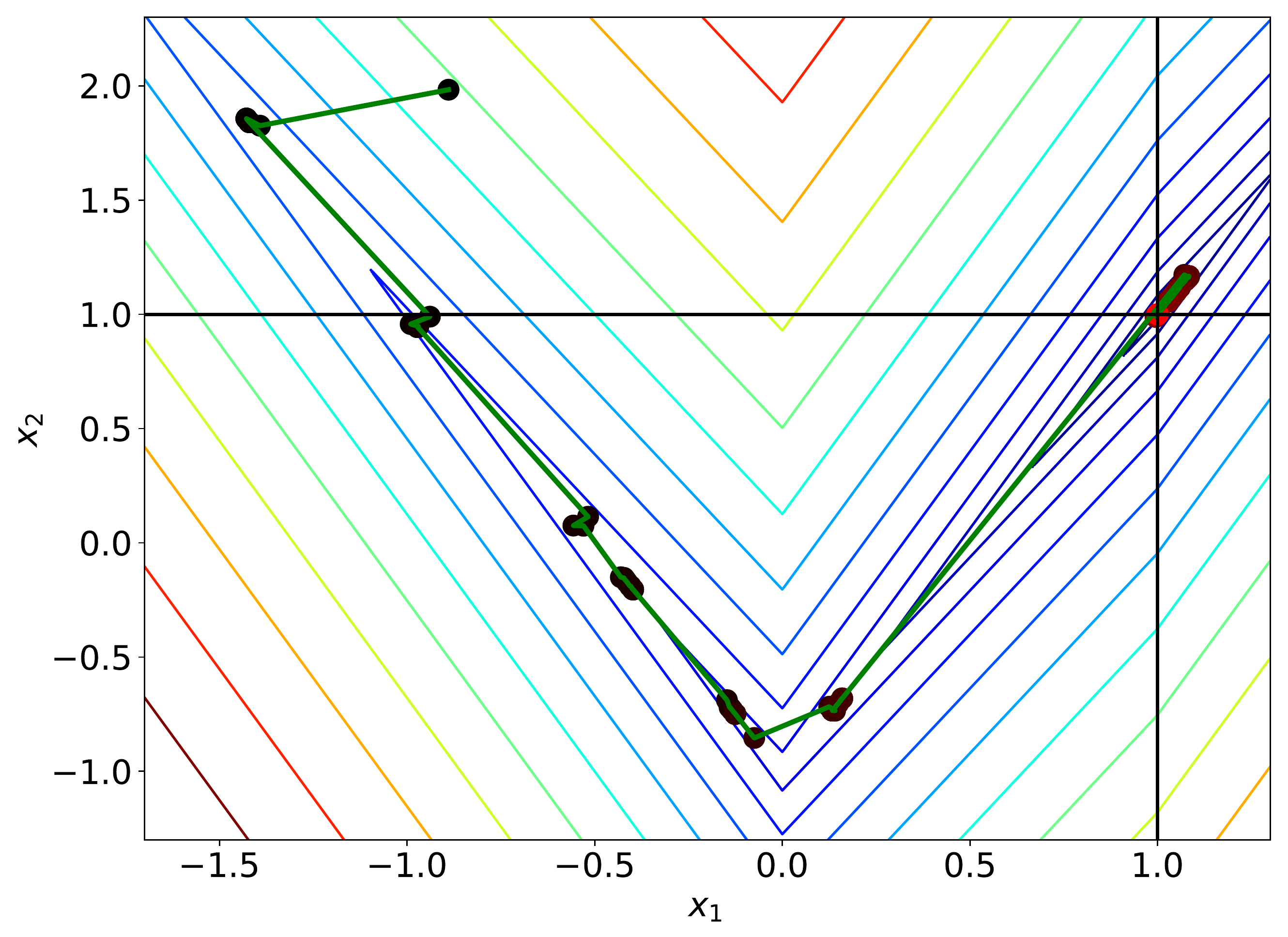}};
\node[draw, fill = white] at (1.85,2.1) {\scriptsize Random pursuit Itoh--Abe} ;
\end{tikzpicture}
\includegraphics[height=5cm]{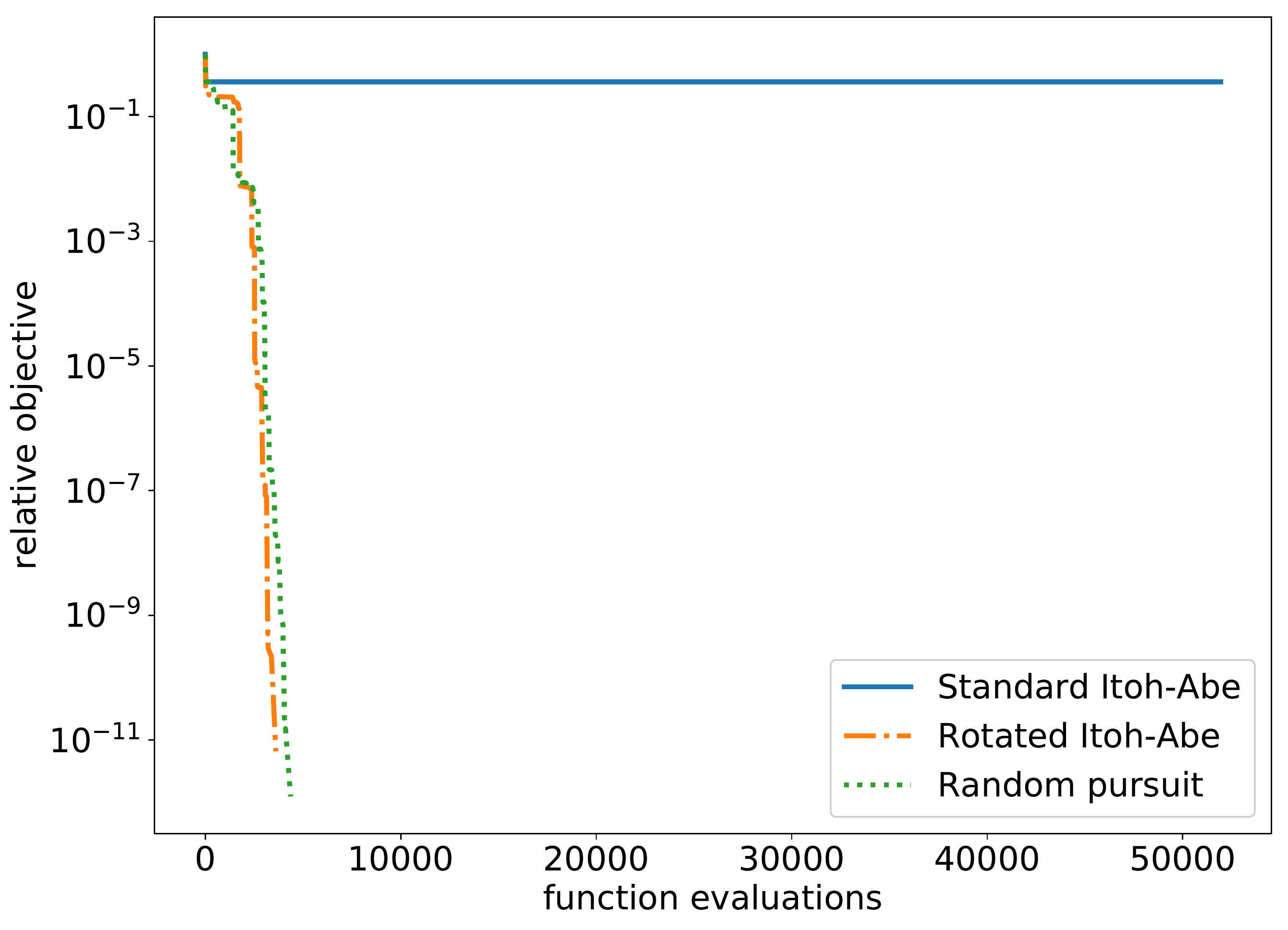}
\end{center}
\caption{Comparison of three variants of the Itoh--Abe method applied to Nesterov's nonsmooth Chebyshev--Rosenbrock function. Top left: Itoh--Abe method with standard frame. Top right: Rotated Itoh--Abe. Bottom left: Itoh--Abe with random pursuit. Bottom right: Convergence rates of the relative objective \(\frac{V(x^k) - V^*}{V(x^0) - V^*}\) for the three variants, displayed with a log--log plot.}
\label{fig:nesterov_dg}
\end{figuretmp}

We also compare the performance of the randomised Itoh--Abe (RIA) method to Py-BOBYQA and LT-MADS for Nesterov's nonsmooth Chebyshev--Rosenbrock function. We set the parameters of the Itoh--Abe method to \(\eps = 10^{-10}\), \(\tau_{\min} = 10^{-4}\), \(\tau_{\max} = 10^2\), \(\eta = 10^{-16}\), and \(M = 100\), the parameters of Py-BOBYQA to \(\mbox{rhobeg} = 2\), \(\mbox{rhoend} = 10^{-16}\) and \(\mbox{npt} = (n+1)(n+2)/2\), and the parameters of LT-MADS to \(\mbox{DIRECTION\_TYPE} = \mbox{LT } 2N\) and \(\mbox{MIN\_MESH\_SIZE } = 10^{-13}\). See \figref{fig:nesterov} and \ref{fig:nesterov2} for the numerical results for two different starting points. In the first case, the Itoh--Abe method successfully converges to the global minimiser, the LT-MADS method locates the nonminimising stationary point at \([0, -1]^T\), while the Py-BOBYQA iterates stagnate at a kink, reflecting the fact that the method is not designed for nonsmooth functions. In the second case, both the Itoh--Abe method and LT-MADS locate the minimiser, while the Py-BOBYQA iterates stagnate at a kink.

\begin{figuretmp}
\begin{center}
\begin{tikzpicture}
\draw (0,0) node[inner sep = 0] {\includegraphics[height=5cm]{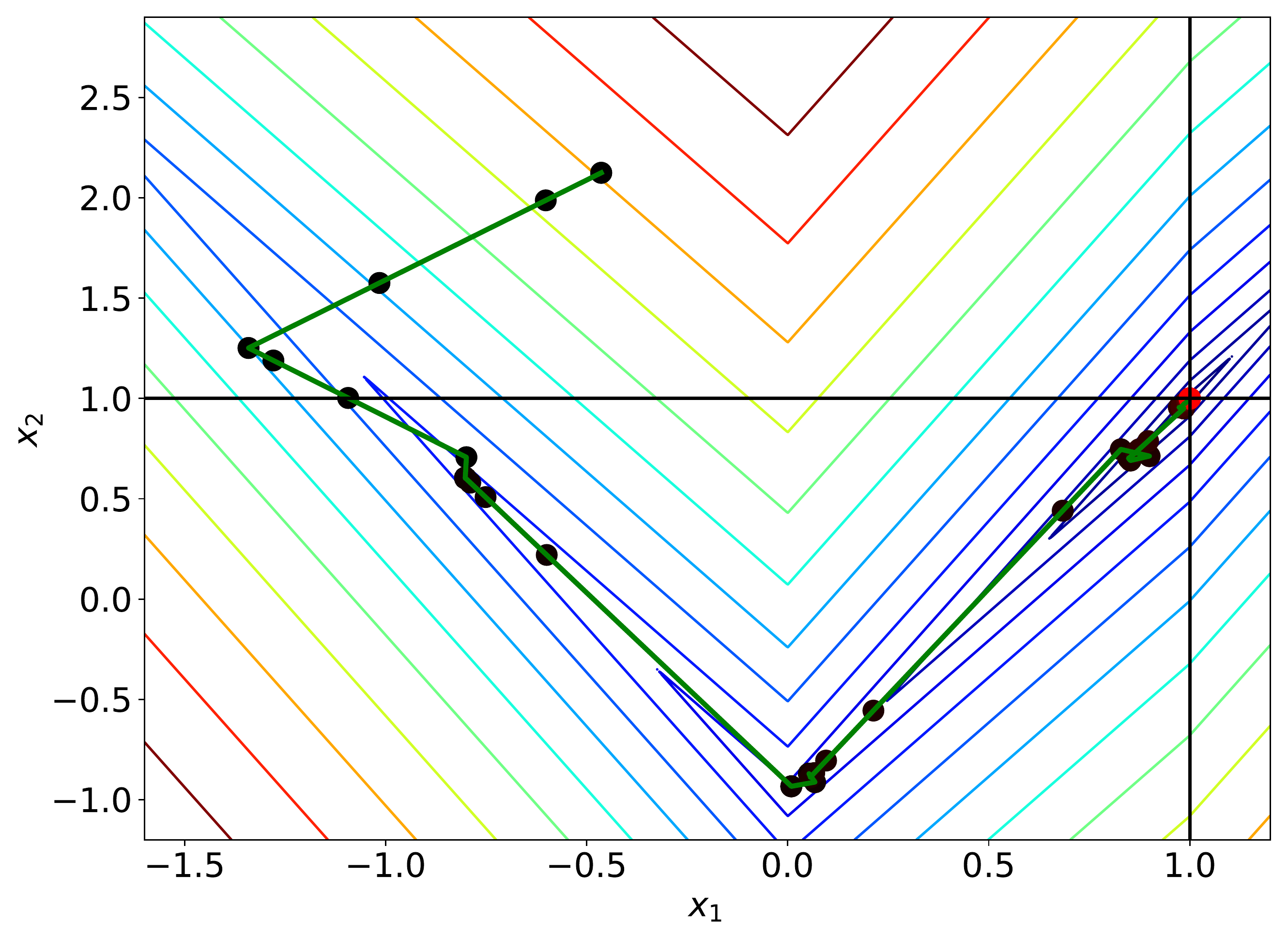}};
\node[draw, fill = white] at (-2.2,-1.7) {\scriptsize RIA} ;
\end{tikzpicture}
\begin{tikzpicture}
\draw (0,0) node[inner sep = 0] {\includegraphics[height=5cm]{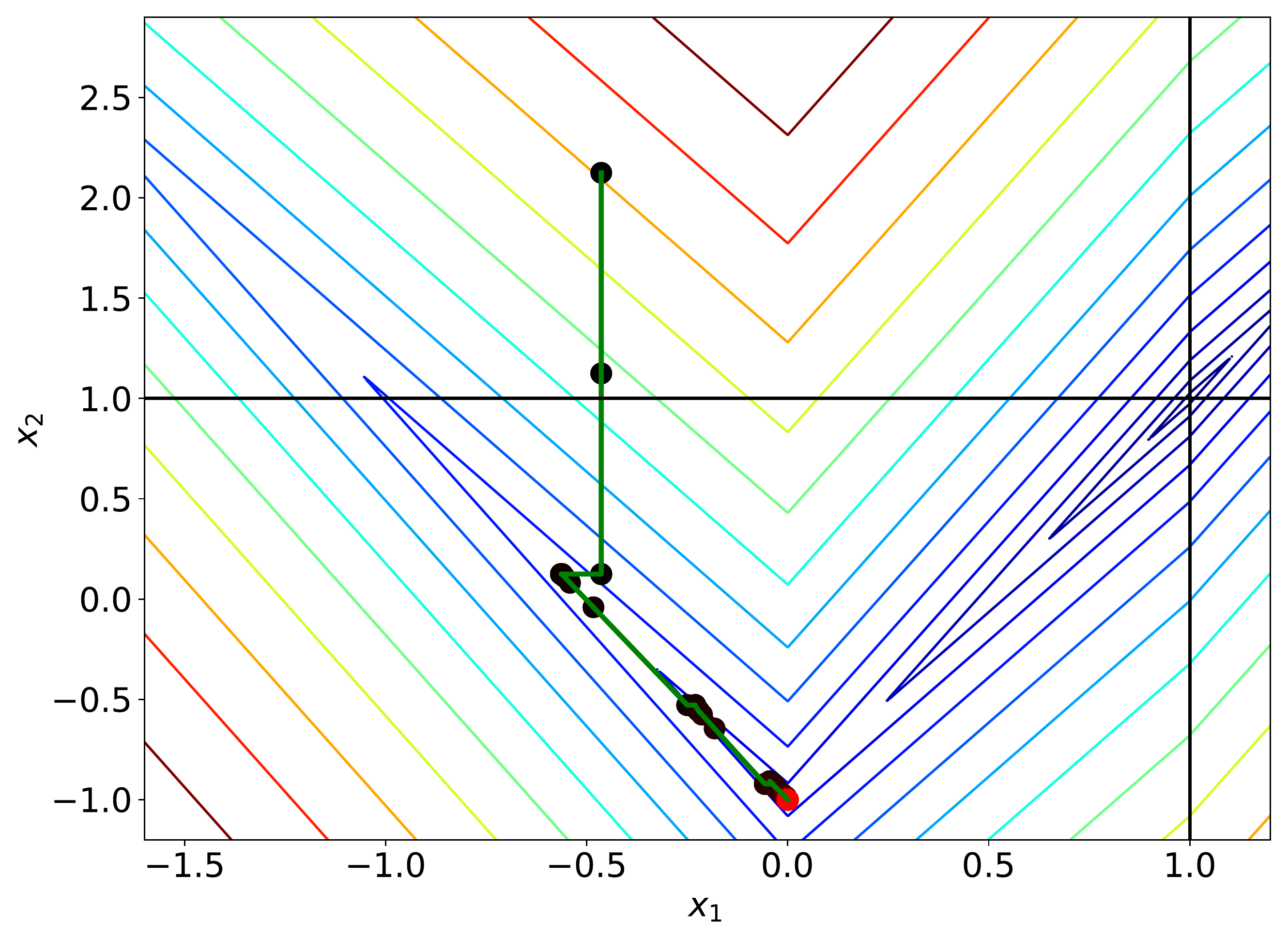}};
\node[draw, fill = white] at (-1.9,-1.7) {\scriptsize LT-MADS} ;
\end{tikzpicture}
\begin{tikzpicture}
\draw (0,0) node[inner sep = 0] {\includegraphics[height=5cm]{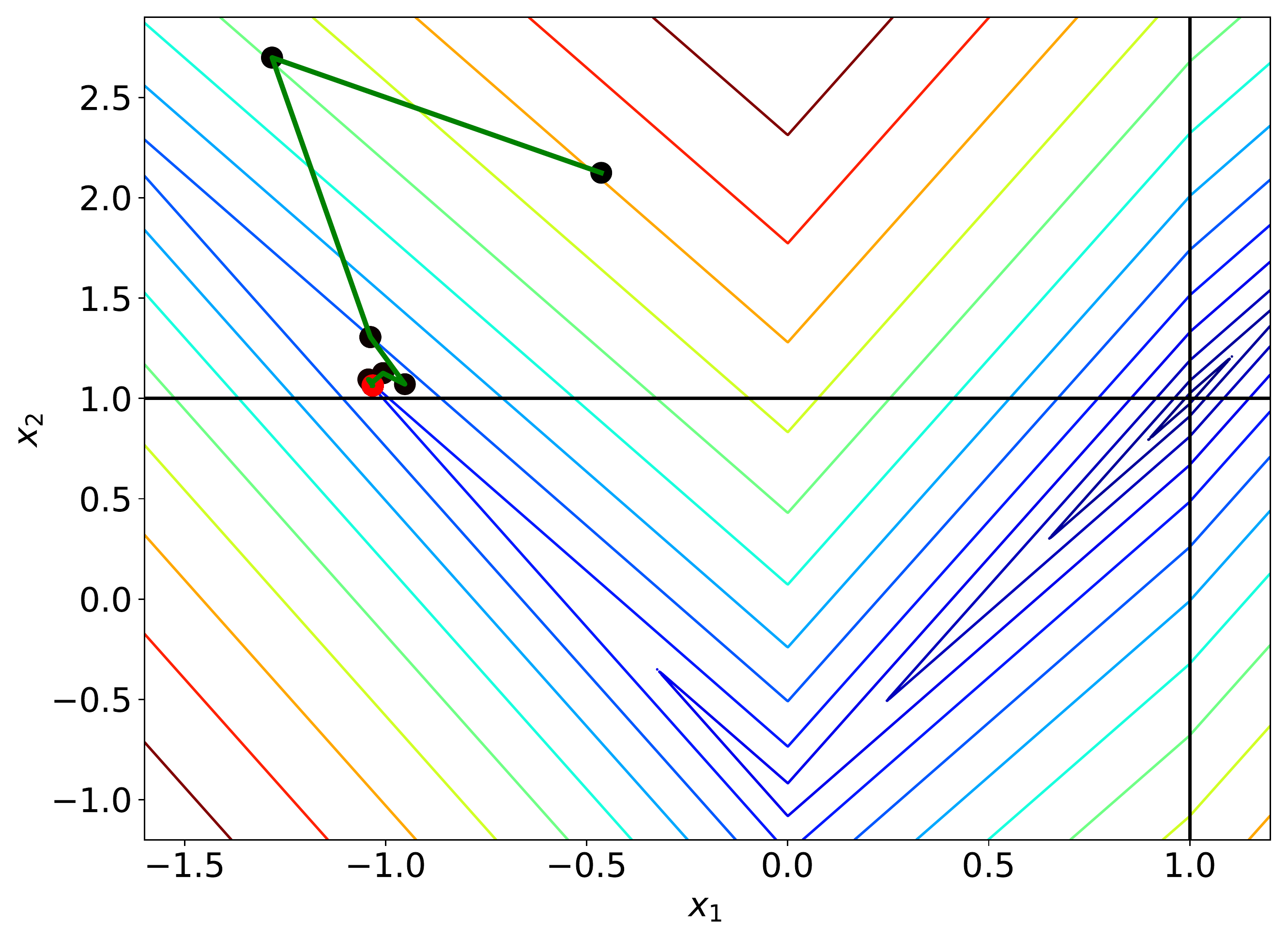}};
\node[draw, fill = white] at (-1.73,-1.7) {\scriptsize Py-BOBYQA} ;
\end{tikzpicture}
\includegraphics[height=5cm]{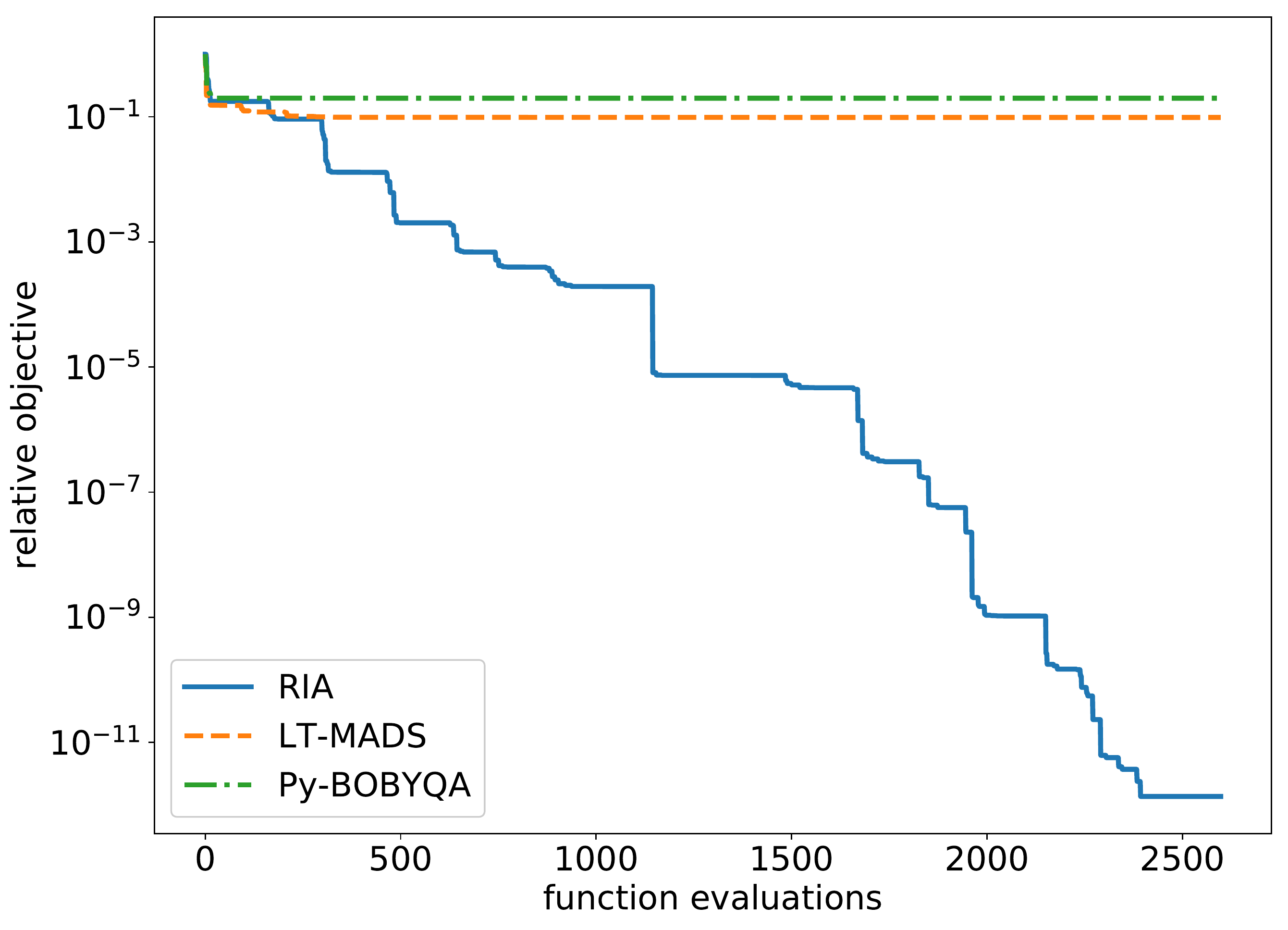}
\end{center}
\caption{Comparison of rotated Itoh--Abe method, LT-MADS and Py-BOBYQA applied to Nesterov's nonsmooth Chebyshev--Rosenbrock function. Top left: The iterates from the Itoh--Abe method locate the unique minimiser to an order of accuracy of about \(10^{-11}\). Top right: The iterates from the LT-MADS method locate the nonminimising stationary point. Bottom left: The iterates from the Py-BOBYQA method stagnate due to nonsmoothness. Bottom right: A plot of the relative objective \(\frac{V(x^k) - V^*}{V(x^0) - V^*}\) with respect to function evaluations, for each method.}
\label{fig:nesterov}
\end{figuretmp}

\begin{figuretmp}
\begin{center}
\begin{tikzpicture}
\draw (0,0) node[inner sep = 0] {\includegraphics[height=5cm]{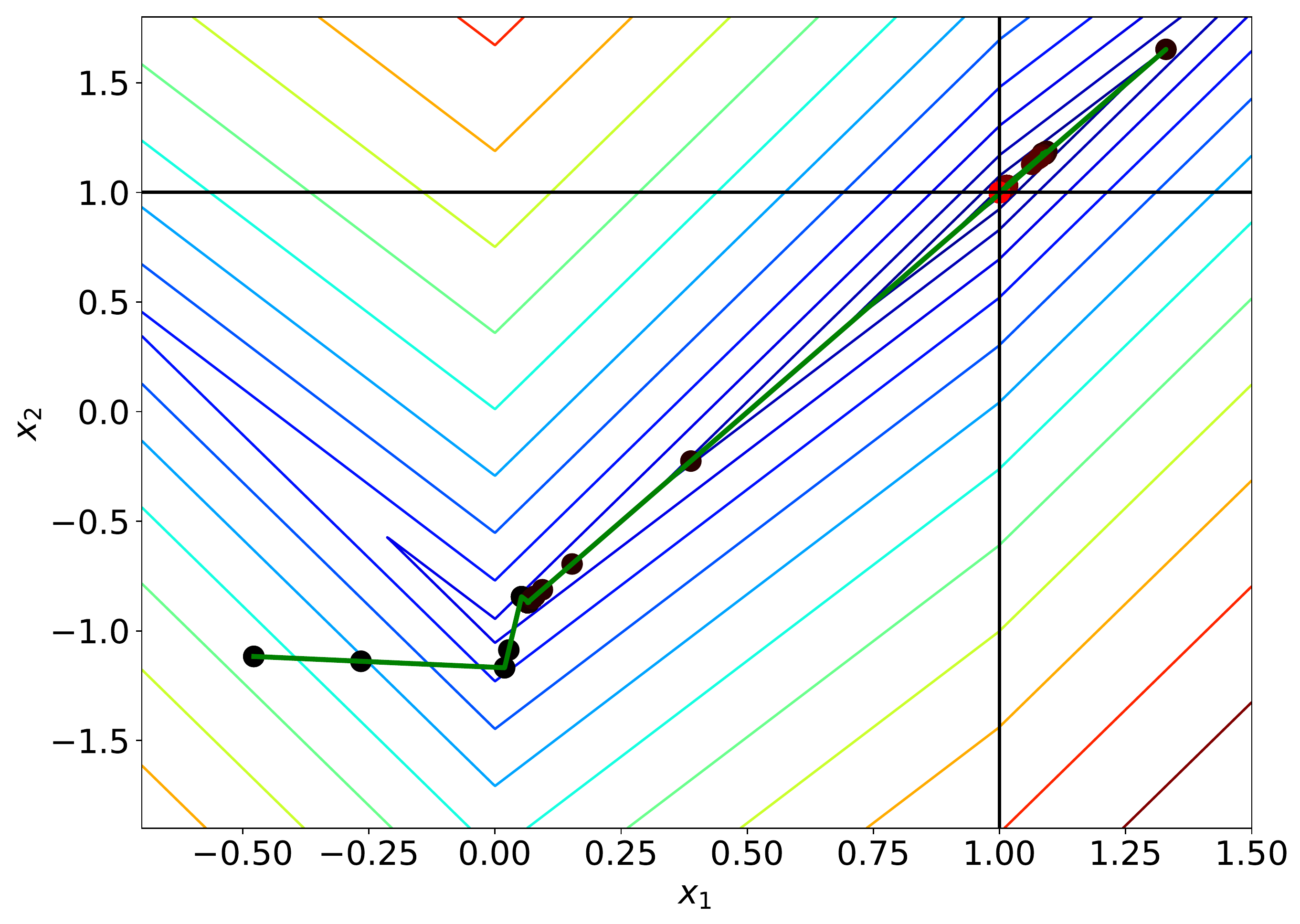}};
\node[draw, fill = white] at (-2.3,-1.7) {\scriptsize RIA} ;
\end{tikzpicture}
\begin{tikzpicture}
\draw (0,0) node[inner sep = 0] {\includegraphics[height=5cm]{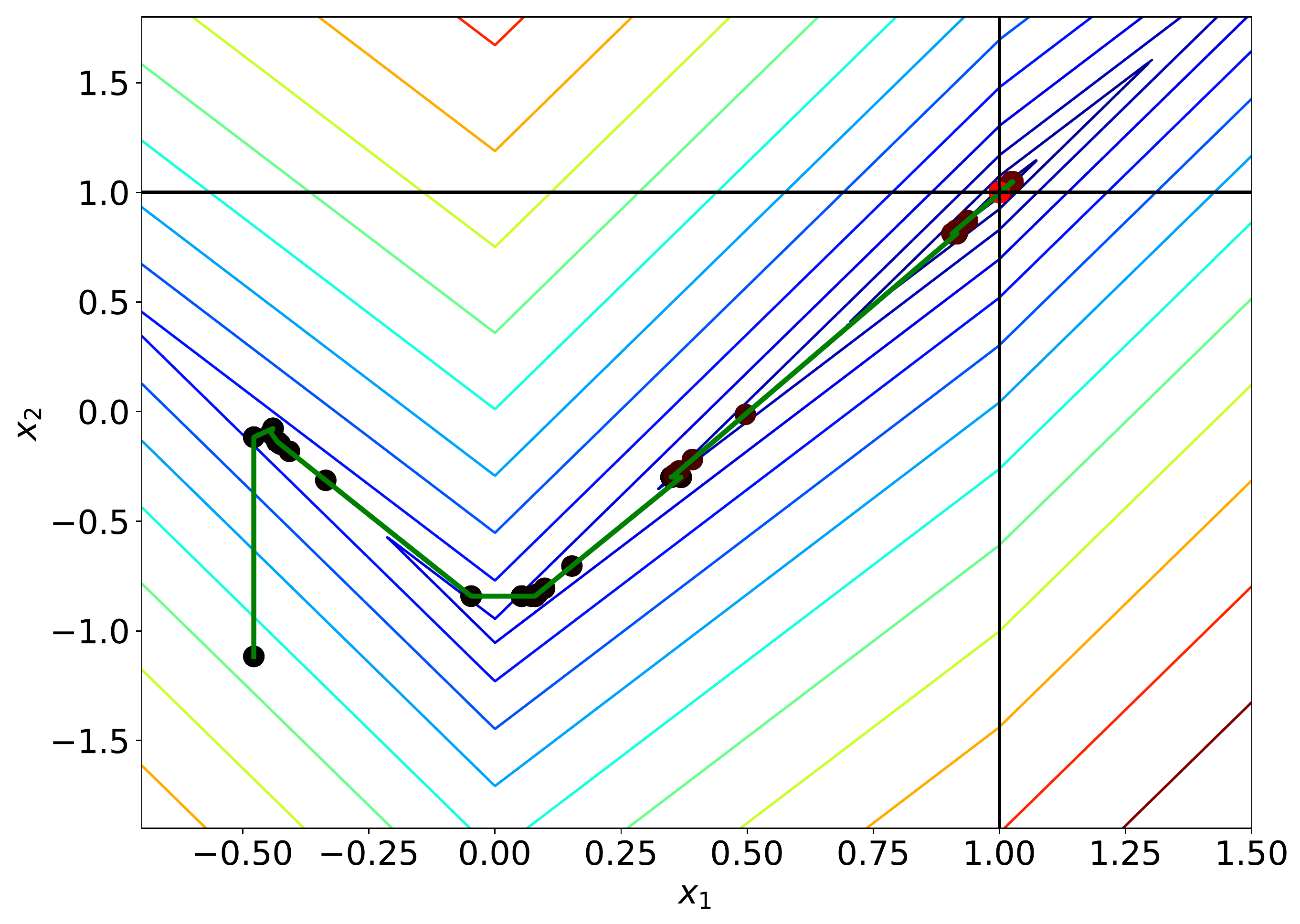}};
\node[draw, fill = white] at (-2.02,-1.7) {\scriptsize LT-MADS} ;
\end{tikzpicture}
\begin{tikzpicture}
\draw (0,0) node[inner sep = 0] {\includegraphics[height=5cm]{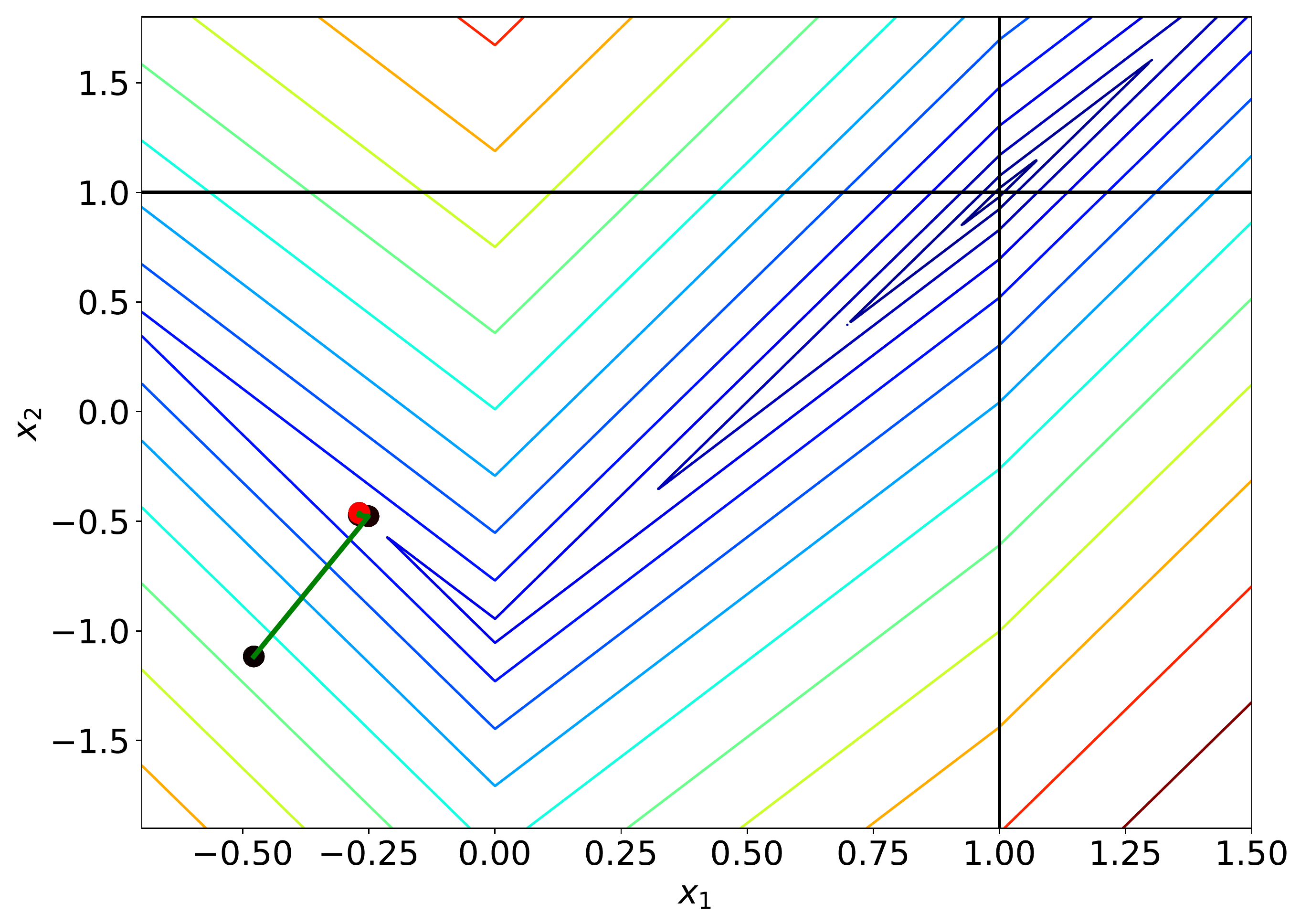}};
\node[draw, fill = white] at (-1.82,-1.7)  {\scriptsize Py-BOBYQA} ;
\end{tikzpicture}
\includegraphics[height=5cm]{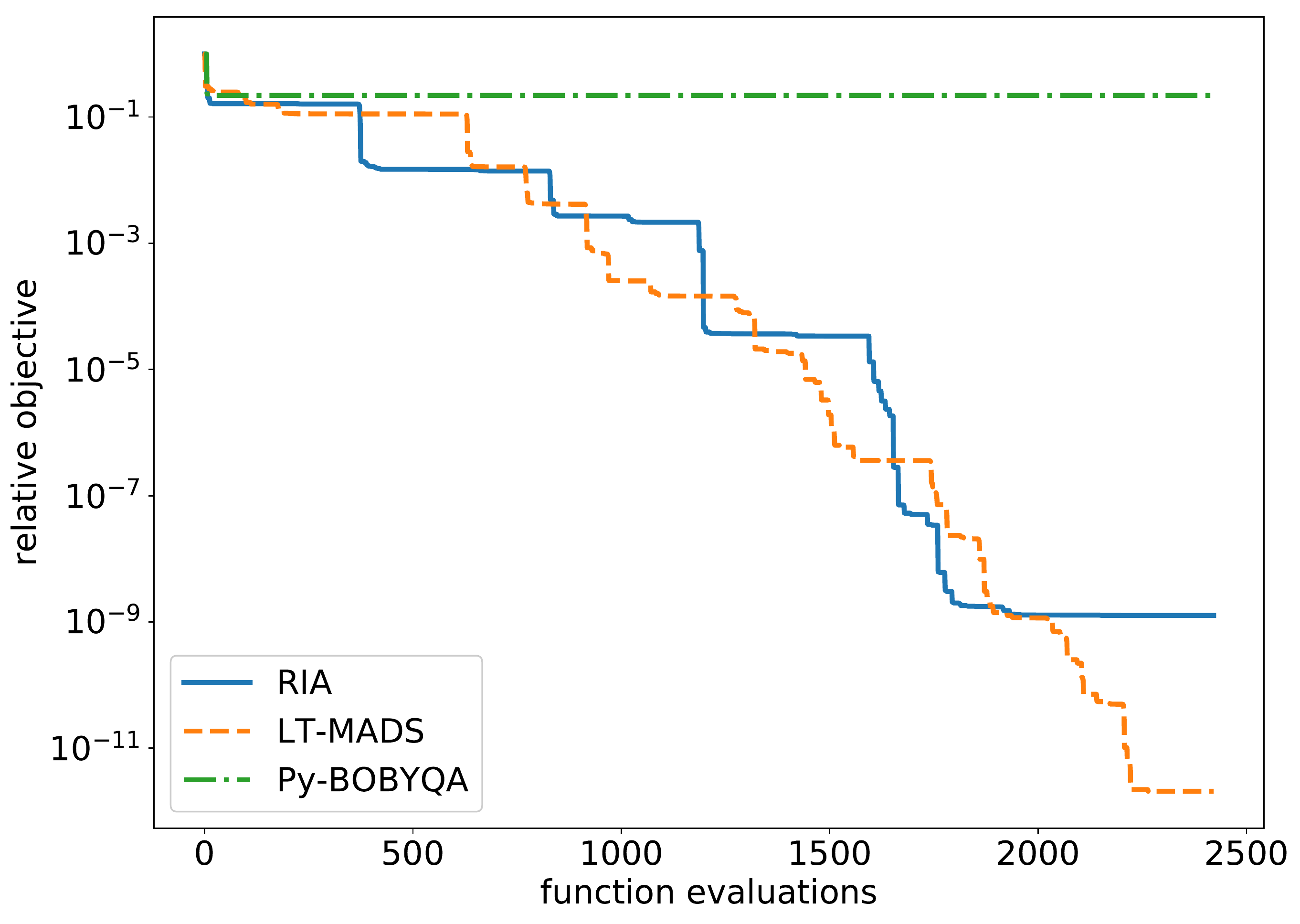}
\end{center}
\caption{Comparison of rotated Itoh--Abe method, LT-MADS and Py-BOBYQA applied to Nesterov's nonsmooth Chebyshev--Rosenbrock function with a different starting point. Top left: The iterates from the Itoh--Abe method locate the unique minimiser to an order of accuracy of about \(10^{-11}\). Top right: The iterates from the Py-BOBYQA method stagnate due to nonsmoothness. Bottom left: The iterates from the LT-MADS method locate the nonminimising stationary point. Bottom right: A plot of the relative objective \(\frac{V(x^k) - V^*}{V(x^0) - V^*}\) with respect to function evaluations, for each method.}
\label{fig:nesterov2}
\end{figuretmp}

\subsection{Bilevel parameter learning in image analysis}
\label{sec:bilevel}

\begin{figuretmp}
\begin{center}
\begin{subfigure}{0.45\textwidth}
\hspace{0.2cm} \includegraphics[height=5cm]{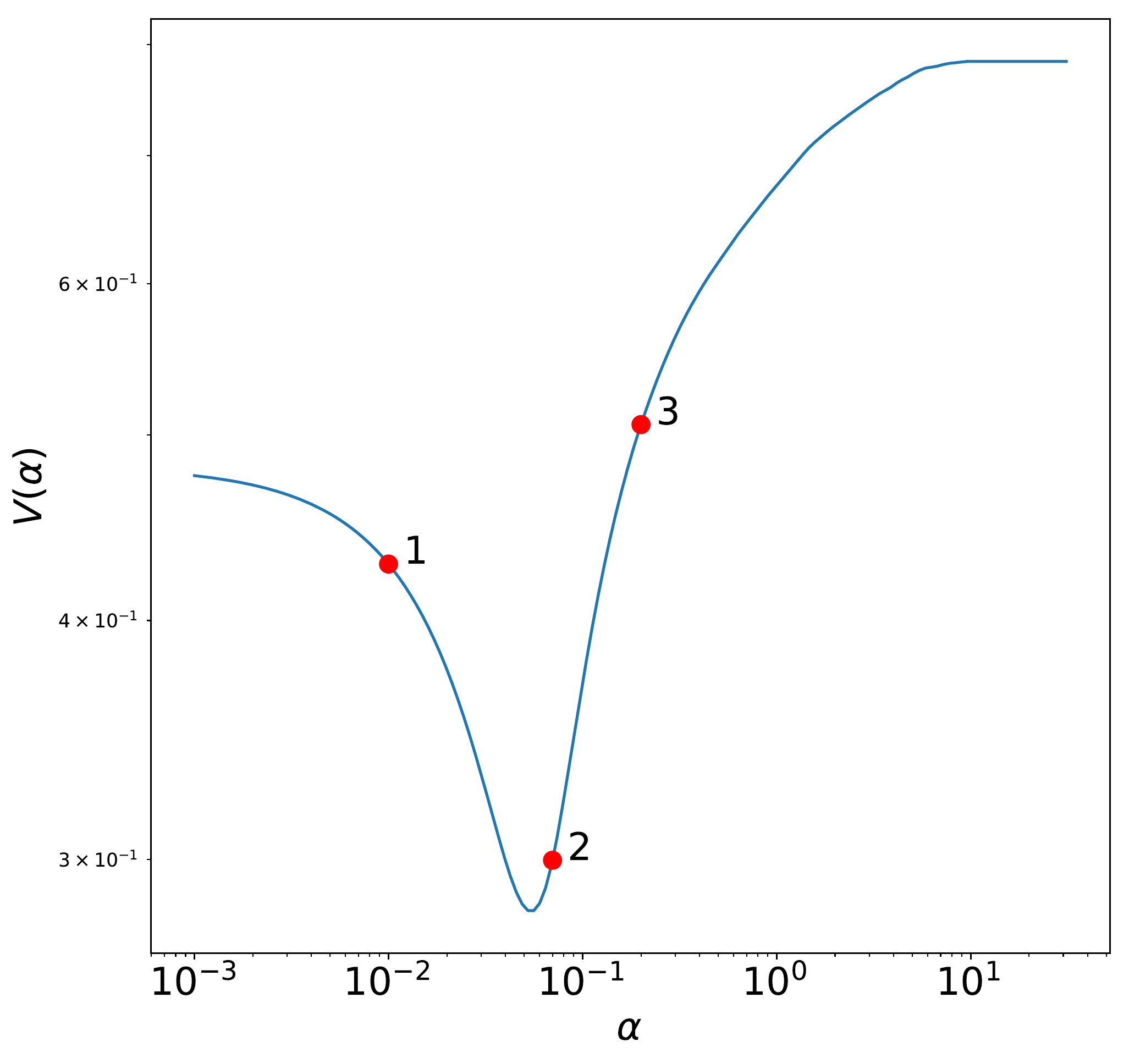}
\end{subfigure}
\begin{subfigure}{0.45\textwidth}
\begin{tikzpicture}
\draw (0,0) node[inner sep = 0] {\includegraphics[height=5cm]{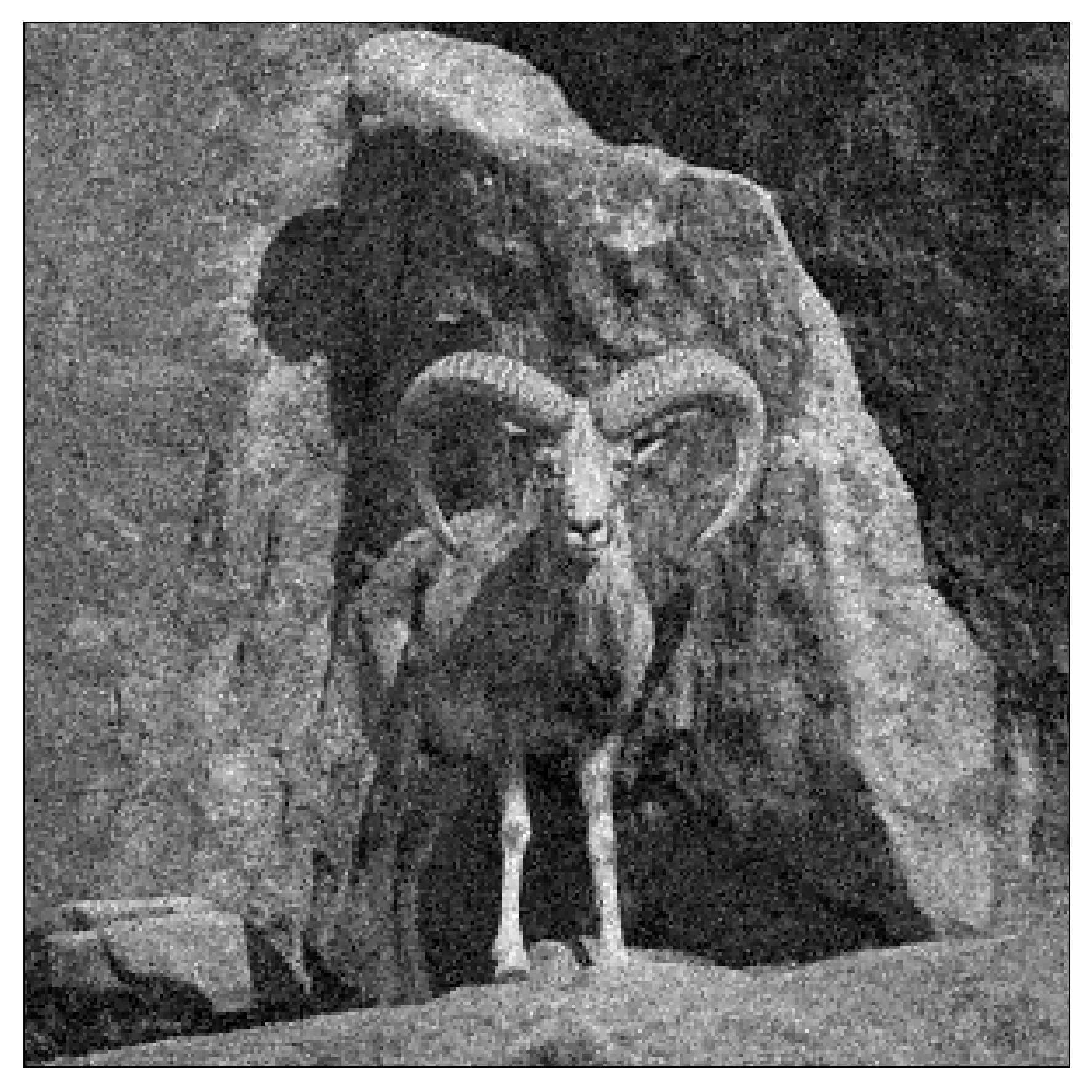}};
\node[draw, fill = white] at (-1.5,-1.6) {\(\alpha_1\)} ;
\end{tikzpicture}
\end{subfigure}
\begin{subfigure}{0.45\textwidth}
\hspace{0.7cm}
\begin{tikzpicture}
\draw (0,0) node[inner sep = 0] {\includegraphics[height=5cm]{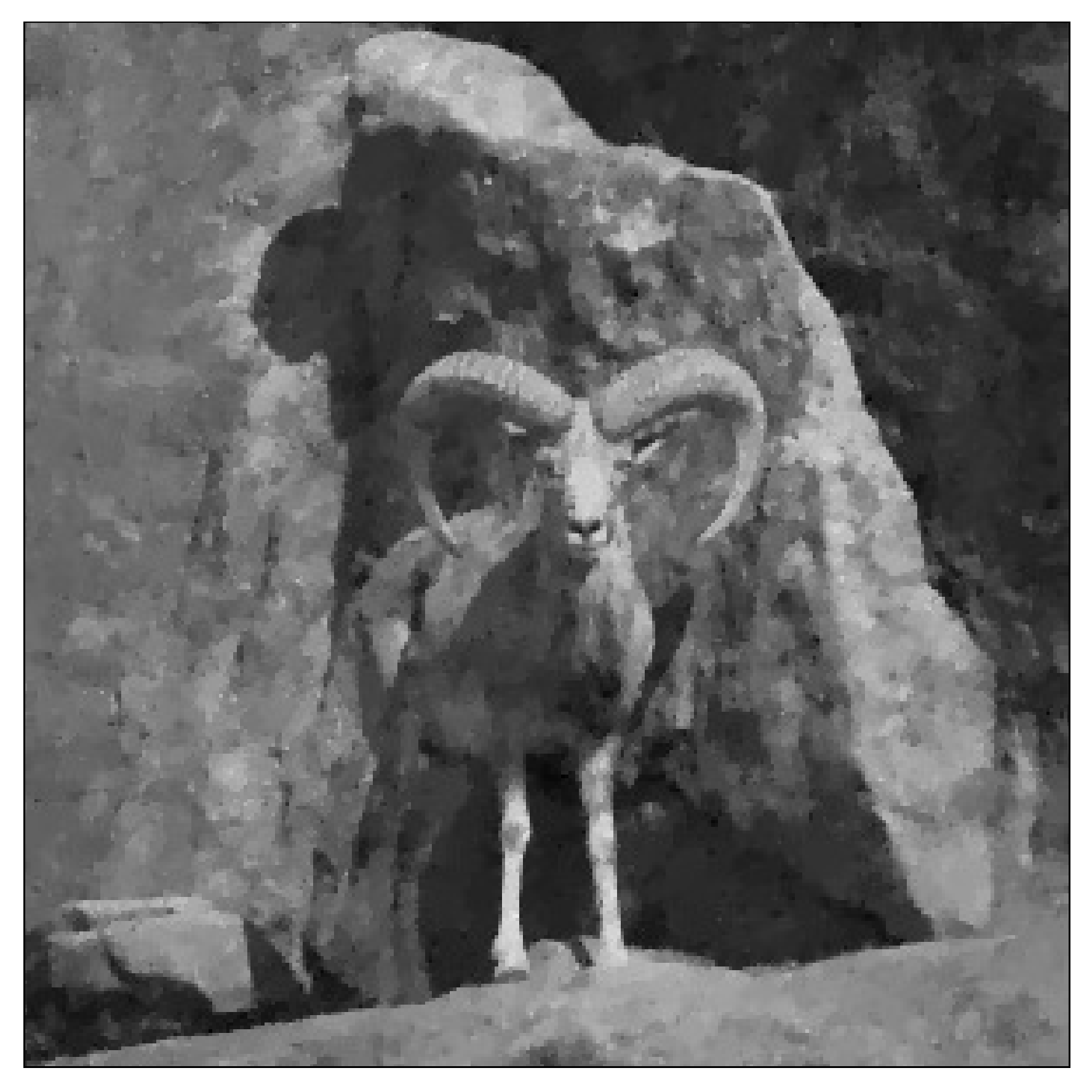}};
\node[draw, fill = white] at (-1.5,-1.6)  {\(\alpha_2\)} ;
\end{tikzpicture}
\end{subfigure}
\begin{subfigure}{0.45\textwidth}
\begin{tikzpicture}
\draw (0,0) node[inner sep = 0] {\includegraphics[height=5cm]{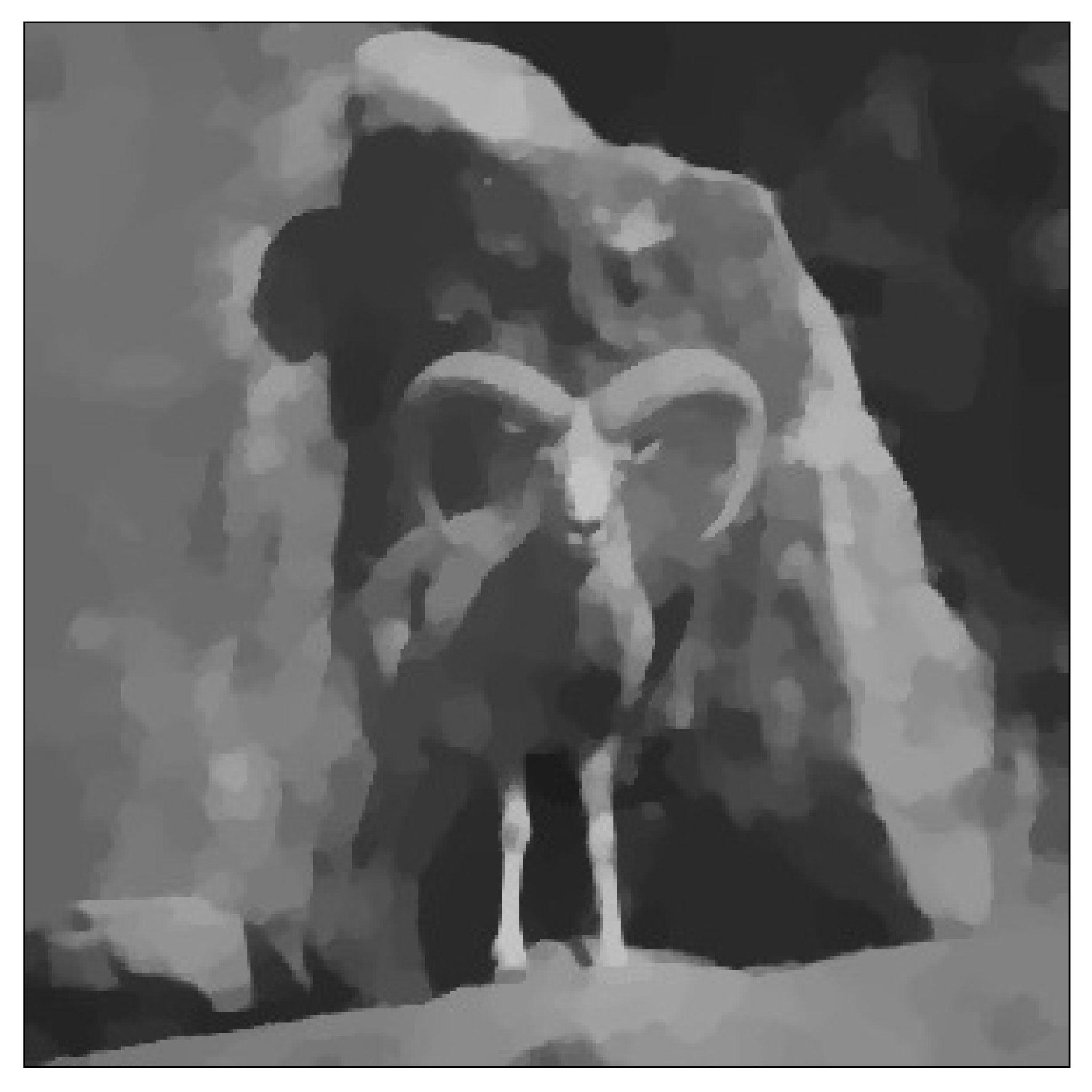}};
\node[draw, fill = white] at (-1.5,-1.6)  {\(\alpha_3\)} ;
\end{tikzpicture}
\end{subfigure}
\end{center}
\caption{TV denoising reconstructions for different regularisation parameters. Top left: Graph of \(V\) in \eqref{eq:V}. Top right: First parameter choice, \(\alpha_1 = 10^{-2}\). Bottom left: Second parameter choice, \(\alpha_2 = 7\times 10^{-2}\). Bottom right: Third parameter choice, \(\alpha_3 = 2\times 10^{-1}\).}
\label{fig:denoise}
\end{figuretmp}

In this subsection, we consider the Itoh--Abe method for solving bilevel optimisation problems for the learning of parameters of variational imaging problems. We restrict our focus to denoising problems, although the same method could be applied to any inverse problem. We first consider one-dimensional bilevel problems with wavelet and TV denoising, and two-dimensional problems with TGV denoising. In the TGV case, we compare the randomised Itoh--Abe method to the Py-BOBYQA and LT-MADS methods. Throughout this section, we set \(M = n\), where \(n= 1, 2\).

\subsubsection{Setup for variational regularisation problem}

Consider an image \(u^\dagger \in L^2(\Omega)\), for some domain \(\Omega \subset \RR^2\), and a noisy image
\[
f^\delta = u^\dagger + \mbox{ noise}.
\]
To recover a clean image from the noisy one, we consider a parametrised family of regularisers,
\[
\left\{R_\alpha: L^2(\Omega) \to [0, \infty] \; : \; \alpha \in [0, \infty)^n\right\},
\]
and solve the variational regularisation problem
\begin{equation}
\label{eq:regularisation_problem}
u_\alpha \in \argmin_{u} \frac{1}{2}\|u - f^\delta\|^2 + R_\alpha(u).
\end{equation}
The first term in \eqref{eq:regularisation_problem}, the \emph{data fidelity} term, ensures that the reconstruction approximates \(f^\delta\). The regulariser term serves to denoise the reconstruction, by promoting favourable features such as smooth regions and sharp edges. The parameters \(\alpha\) determine how heavily to regularise, and sometimes adjust other features of the regulariser. See \citep{ben18, ito14, sch08} for an overview of variational regularisation methods.

We list some common regularisers in image analysis. \emph{Total variation} (TV) \citep{bur13, rud92} is given by the function \(R_\alpha(u) := \alpha \TV(u)\), where \(\alpha \in [0, \infty)\), and
\begin{equation*}
\TV(u) := \sup\set{\int_{\Omega} u(x) \Div \phi(x) \dif x \; : \; \phi \in C^1_c(\Omega; \RR^d), \|\phi\|_\infty \leq 1}.
\end{equation*}
This is one of the most common regularisers for image denoising. See \figref{fig:denoise} for an example of denoising with TV regularisation. We also consider its second-order generalisation, \emph{total generalised variation} \citep{bre10, bre15}, \(R_\alpha(u) = \TGV_\alpha^2(u)\), where \(\alpha = [\alpha_1, \alpha_2]^T \in [0, \infty)^2\) and
\begin{align*}
&\qquad  \TGV^2_\alpha(u) \\
&:= \sup \set{ \int_\Omega \! u(x) \Div^2 \phi(x) \dif x \; : \; \phi \in C^2_c(\Omega; \Sym^2(\RR^d)),  \|\Div^l \phi\|_\infty \leq \alpha_{l+1}, \; l = 0, 1 }.
\end{align*}
For a linear operator \(W\) on \(L^2(\Omega)\), the basis pursuit regulariser
\[
R_\alpha(u) := \alpha \| W u\|_1
\]
promotes sparsity of the image \(u\) in the dictionary of \(W\).

As illustrated in \figref{fig:denoise}, the quality of the reconstruction is sensitive to \(\alpha\). If \(\alpha\) is too low, the reconstruction is too noisy, while if \(\alpha\) is too high, too much detail is removed. As it is generally not possible to ascertain the optimal choice of \(\alpha\) a priori, a significant amount of time and effort is spent on parameter tuning. It is therefore of interest to improve our understanding of optimal parameter choices. One approach is to learn suitable parameters from training data. This requires a desired reconstruction \(u^\dagger\), noisy data \(f^\delta\), and a scoring function \(\Phi: L^2(\Omega) \to \RR\) that measures the error between \(u^\dagger\) and the reconstruction \(u_\alpha\). The bilevel optimisation problem is given by
\begin{equation}
\label{eq:bilevel}
\alpha^* \in \argmin_{\alpha \in [0, \infty)^n} \Phi(u_\alpha), \qquad   \mbox{ s.t.} \;\; u_\alpha \mbox{ solves } \eqref{eq:regularisation_problem}.
\end{equation}
In our case, we have strong convexity in the data fidelity term, which implies that \(u_\alpha\) is unique for each \(\alpha \in [0, \infty)^n\). We can therefore define a mapping
\begin{equation}
\label{eq:V}
V(\alpha) := \Phi(u_\alpha).
\end{equation}

The bilevel problem \eqref{eq:bilevel} is difficult to tackle, both analytically as well as numerically. In most cases, the lower level problem \eqref{eq:regularisation_problem} does not have a closed form formulation. Instead, a reconstruction \(u_\alpha\) is approximated numerically with an algorithm. Therefore, one typically does not have access to gradient or subgradient information\footnote{While automatic differentiation \citep{gri08} can be useful for these purposes, it is in many cases not applicable to iterative algorithms that involve nonsmooth terms or for which the number of iterations cannot be predetermined.  See \citep{nes17} for further discussion of why derivative-free optimisation schemes are still needed.} for the mappings \(\alpha \mapsto u_\alpha\). Furthermore, the bilevel mapping \(\alpha \mapsto \Phi(u_\alpha)\) is often nonsmooth and nonconvex. Therefore, numerically solving \eqref{eq:bilevel} amounts to solving a nonsmooth, nonconvex function in a blackbox setting. We consider the application of the Itoh--Abe method for these problems.

For the numerical experiments in this paper, we reparametrise \(V(\alpha)\) as \(V(\exp(\alpha))\), where the exponential operator is applied elementwise on the parameters. There are two reasons for doing so. The first reason is that this paper is concerned with unconstrained optimisation, and this parametrisation allows is to optimise on \(\RR^n\) instead of \([0, \infty)^n\). The second reason is that \(\exp(\alpha)\) has been found to be a preferable scaling for purposes of numerical optimisation.

\subsubsection{Wavelet denoising}

We consider the wavelet denoising problem
\begin{equation*}
u_\alpha = \argmin_{u \in L^2(\Omega)} \frac{1}{2}\|u - f^\delta\|^2 + \alpha \|W u\|_1,
\end{equation*}
where \(W\) is a wavelet transform. In particular, \(W\) is an orthonormal basis, which implies that the regularisation problem has the unique solution
\[
u_\alpha = W^{-1}T_\alpha(Wf^\delta),
\]
where \(T_\alpha\) is the shrinkage operator defined by
\[
[T_\alpha(v)]_i := \sgn(v_i) \max\del{|v_i| - \alpha, 0}.
\]

We first optimise \(\alpha\) for the scoring function
\[
\Phi(u) := \frac{1}{2}\|u - u^\dagger\|^2.
\]
We set the parameters of the Itoh--Abe method to \(\eps = 10^{-4}\), \(\tau_{\min} = 10^{-1}\), \(\tau_{\max} = 10\), and \(\eta = 10^{-1}\). See \figref{fig:wavelet_L2} for the numerical results.

\begin{figuretmp}
\begin{subfigure}{0.33\textwidth}
\begin{center}
\includegraphics[height=4cm]{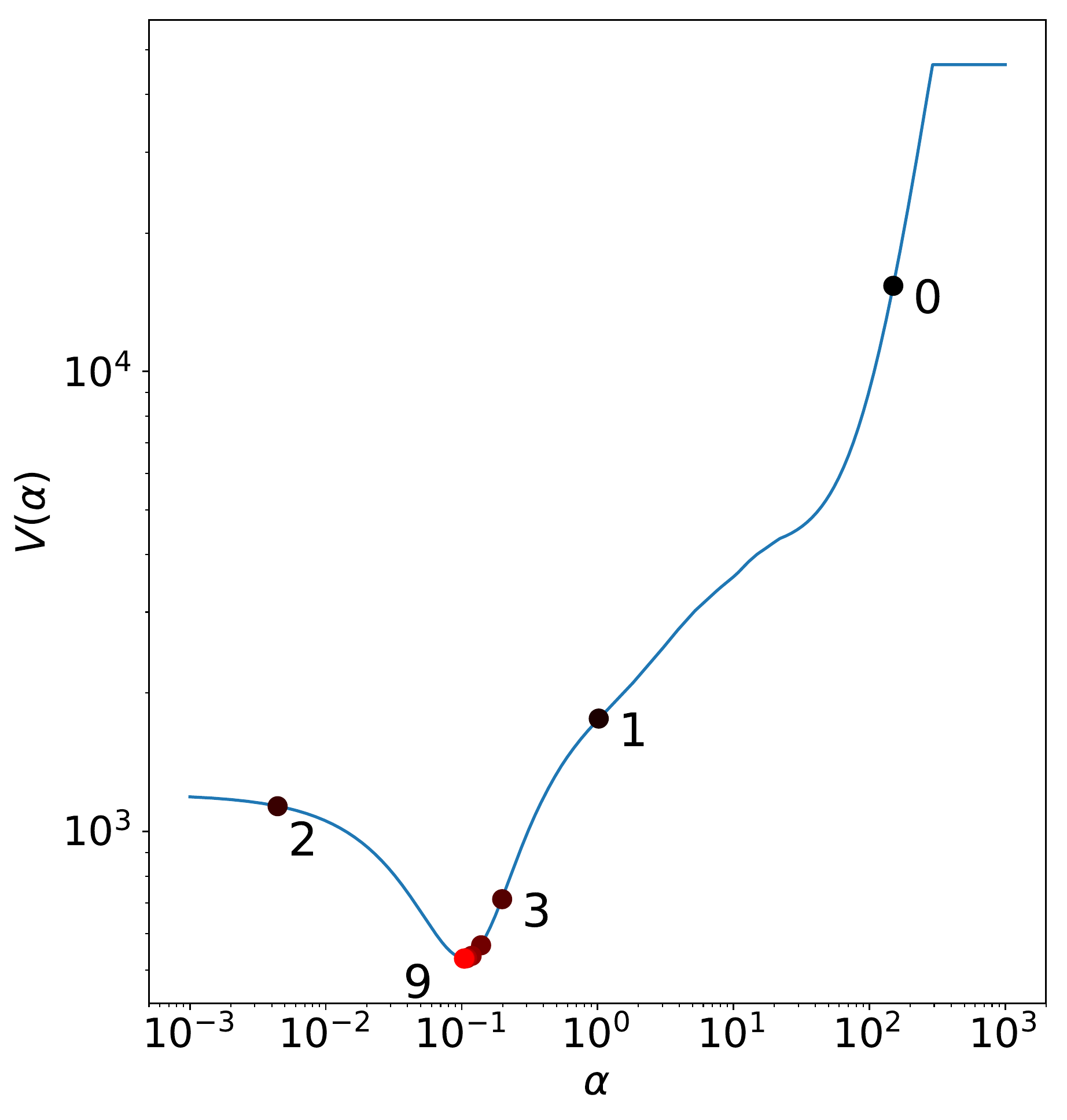}
\end{center}
\vspace{-0.4cm}
\caption{Plot with labels.}
\vspace{0.3cm}
\end{subfigure}
\begin{subfigure}{0.33\textwidth}
\begin{center}
\includegraphics[height=4cm]{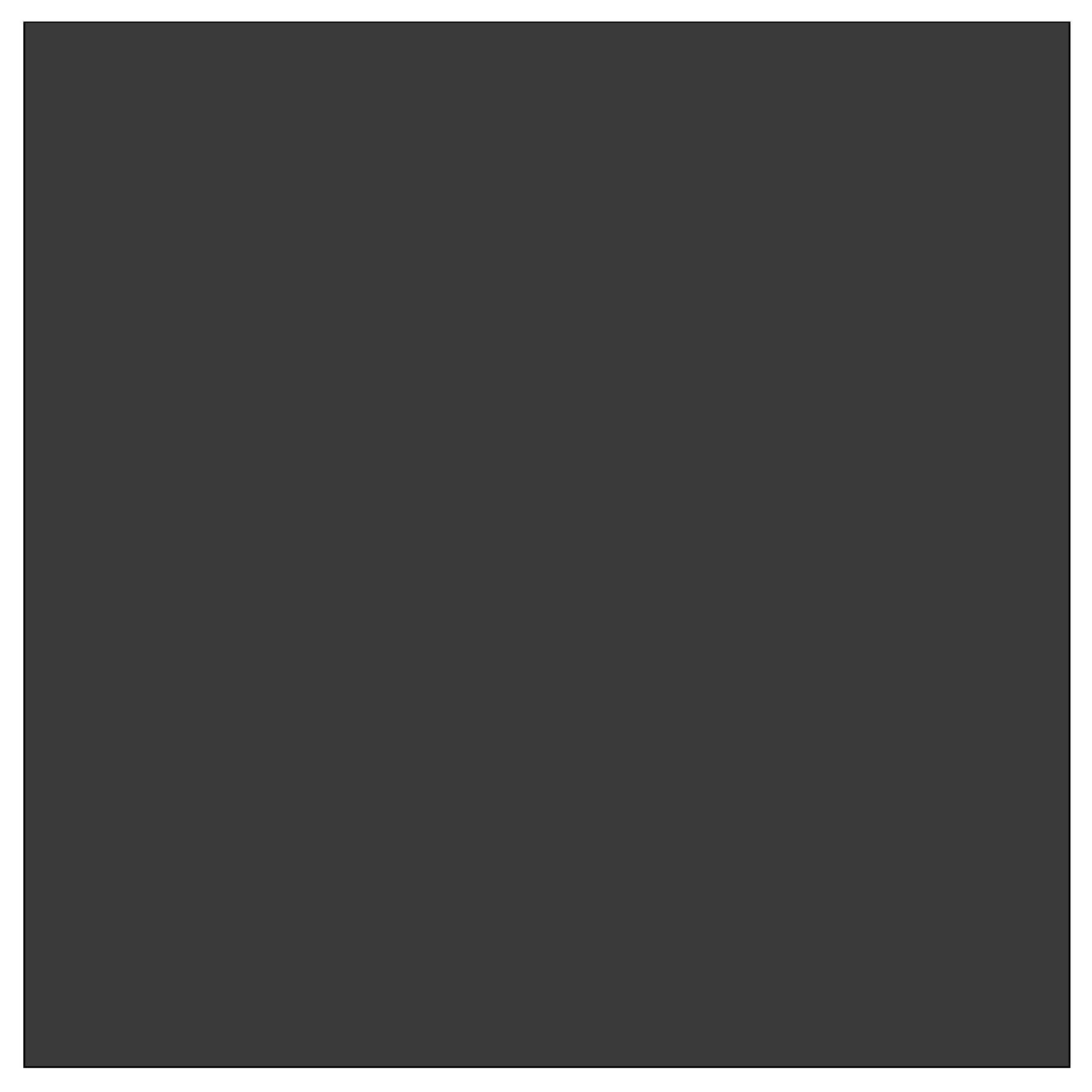}
\end{center}
\vspace{-0.4cm}
\caption{\(k = 0.\) \(\alpha = 1.50 \times 10^2\).}
\vspace{0.3cm}
\end{subfigure}
\begin{subfigure}{0.33\textwidth}
\begin{center}
\includegraphics[height=4cm]{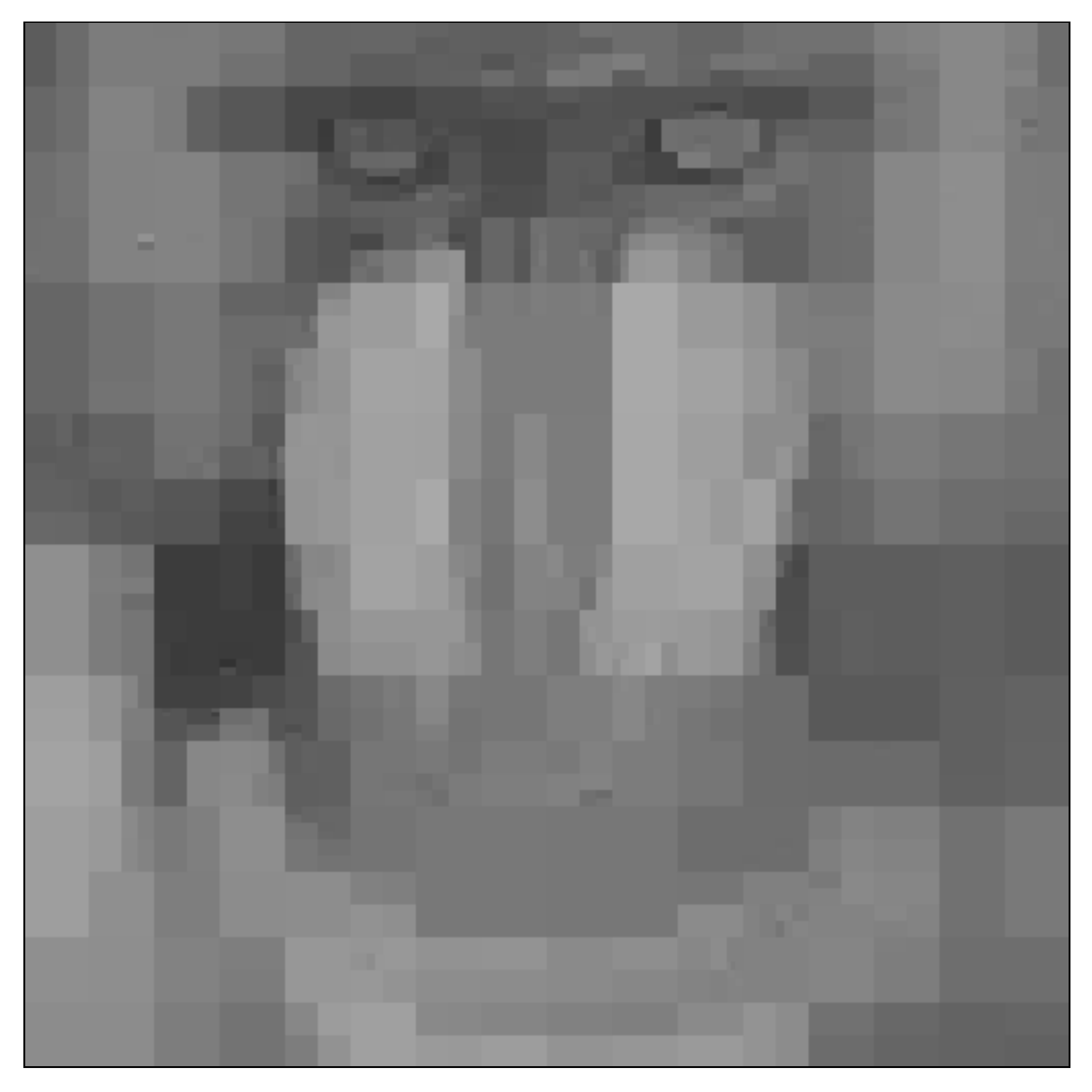}
\end{center}
\vspace{-0.4cm}
\caption{\(k= 1\). \(\alpha = 1.02\).}
\vspace{0.3cm}
\end{subfigure}
\begin{subfigure}{0.33\textwidth}
\begin{center}
\includegraphics[height=4cm]{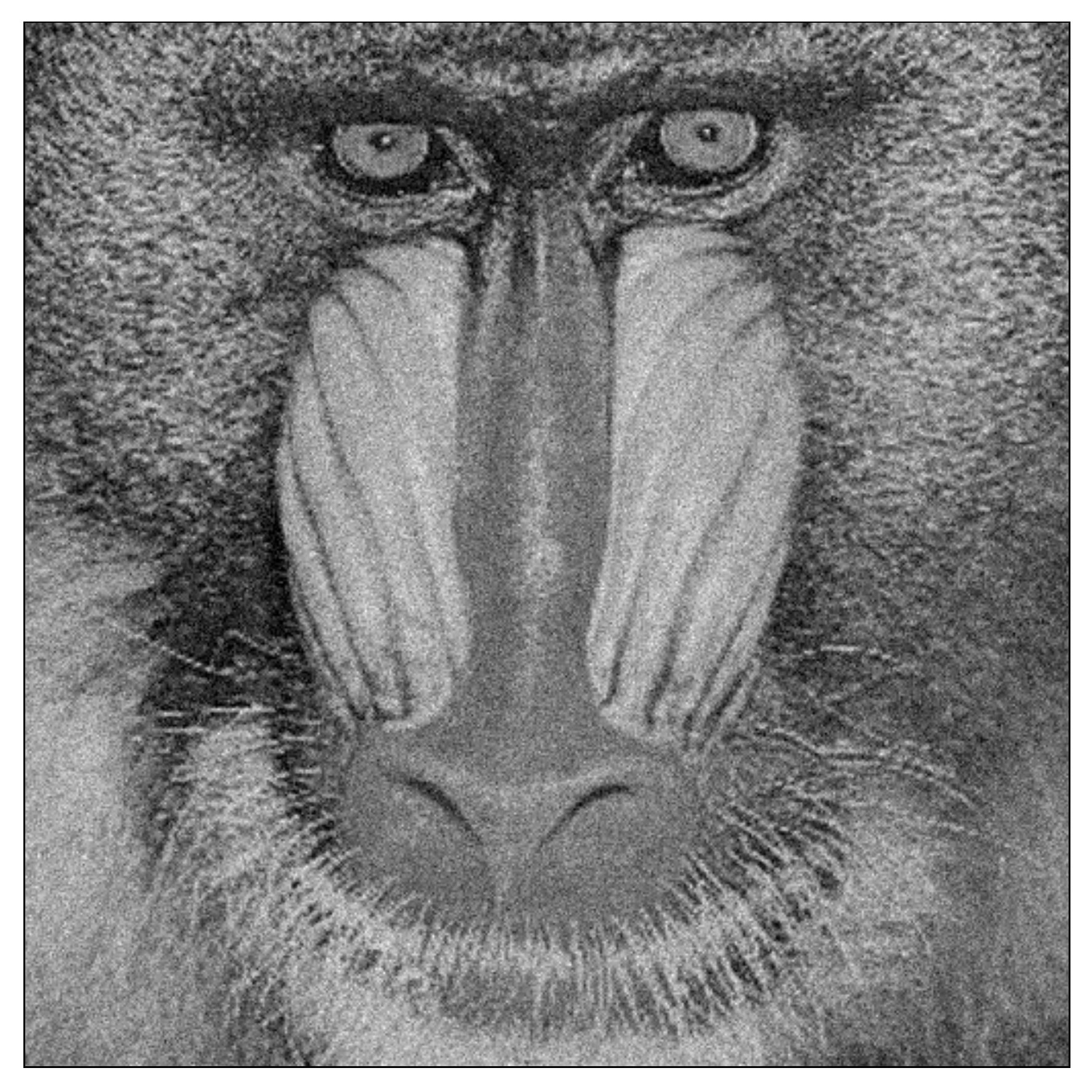}
\end{center}
\vspace{-0.4cm}
\caption{\(k=2\). \(\alpha = 4.42 \times 10^{-3}\).}
\vspace{0.3cm}
\end{subfigure}
\begin{subfigure}{0.33\textwidth}
\begin{center}
\includegraphics[height=4cm]{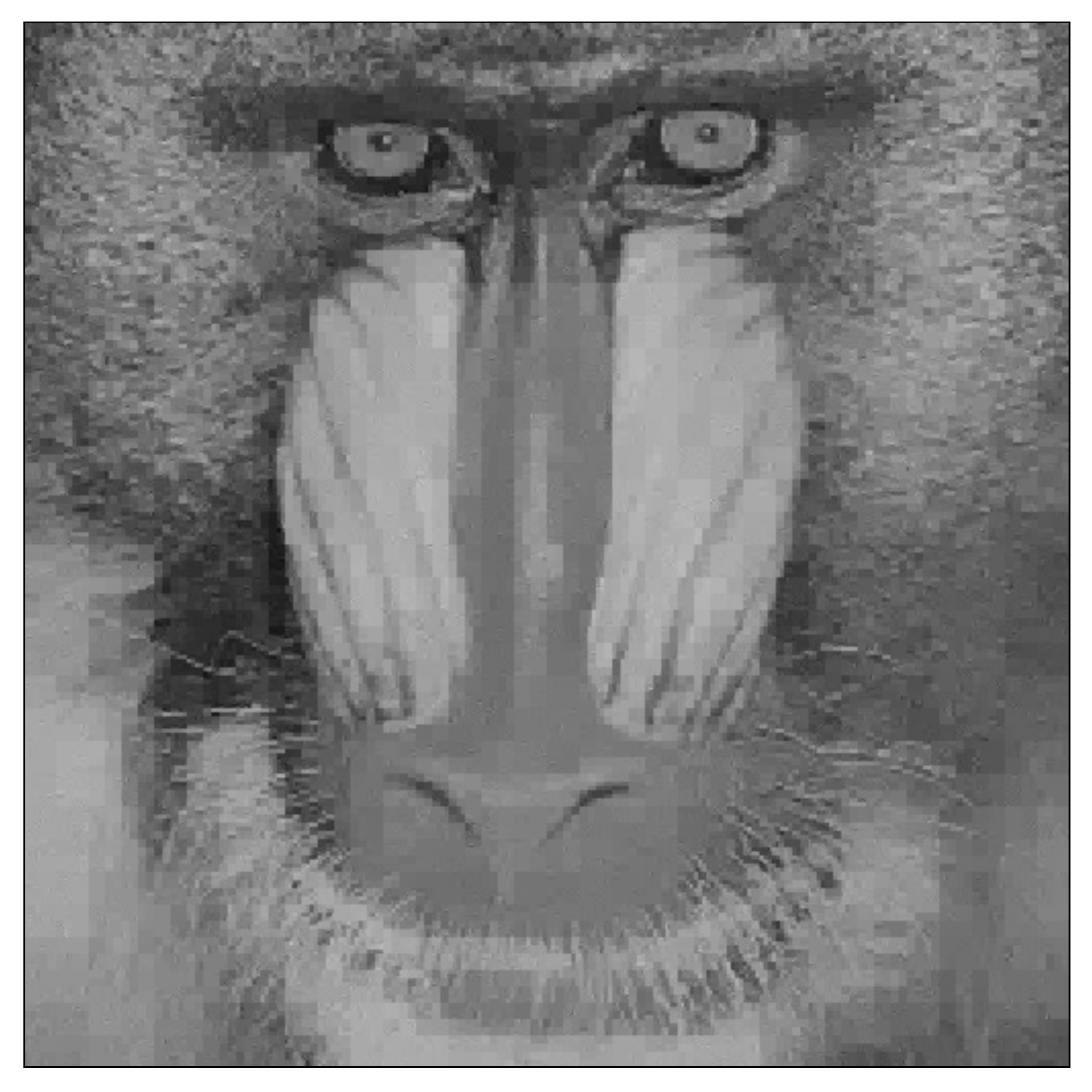}
\end{center}
\vspace{-0.4cm}
\caption{\(k=3\). \(\alpha = 1.99 \times 10^{-1}\).}
\vspace{0.3cm}
\end{subfigure}
\begin{subfigure}{0.33\textwidth}
\begin{center}
\includegraphics[height=4cm]{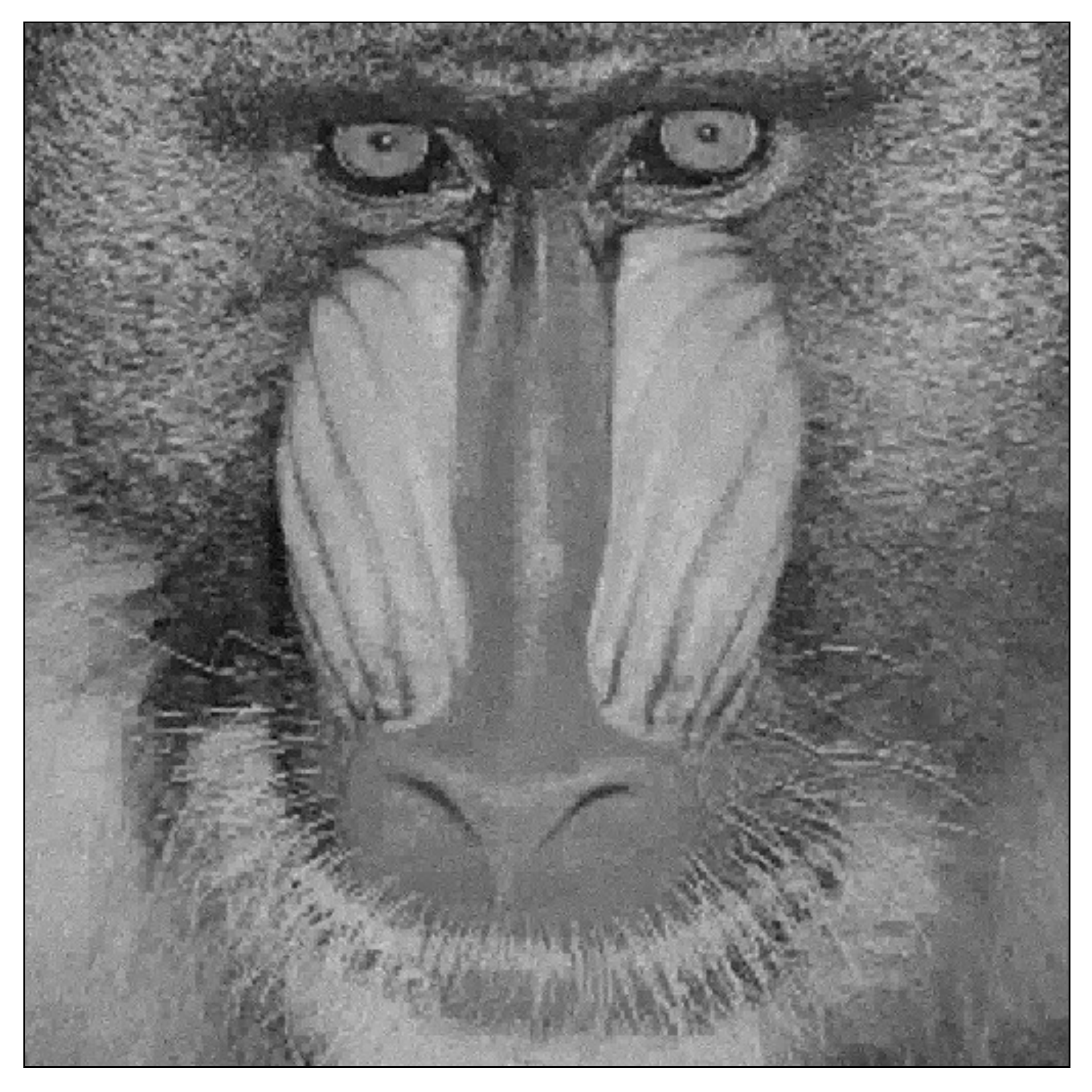}
\end{center}
\vspace{-0.4cm}
\caption{\(k=9\). \(\alpha = 1.04 \times 10^{-1}\).}
\vspace{0.3cm}
\end{subfigure}
\caption{Wavelet denoising with \(L^2\) scoring function and the Itoh--Abe method. Top left: Plot of iterates of the Itoh--Abe method. The rest: Image denoising results at different iterates \(k\).}
\label{fig:wavelet_L2}
\end{figuretmp}

We also optimise \(\alpha\) with respect to the scoring function \(\Phi(u) := 1 - \SSIM(u, u^\dagger)\), where \(\SSIM\) is the \emph{structural similarity} function \citep{wan04}
\[
\SSIM(u, v) := \frac{(2\mu_u \mu_v + c)(2\sigma_{uv} + C)}{(\mu_u^2 + \mu_v^2 + c)(\sigma_u^2 + \sigma_v^2 + C)}.
\]
Here \(\mu_u\) is the mean intensity of \(u\), \(\sigma_u\) is the unbiased estimate of the standard deviation of \(u\), and \(\sigma_{uv}\) is the correlation coefficient between \(u\) and \(v\):
\[
\mu_u := \frac{1}{m} \sum_{i = 1}^m u_i, \; \sigma_u := \del{ \frac{1}{m-1} \sum_{i = 1}^m (u_i-\mu_u)^2}^{\frac{1}{2}}, \; \sigma_{uv} := \frac{1}{m-1} \sum_{i = 1}^m (u_i-\mu_u)(v_i - \mu_v).
\]
We set the parameters of the Itoh--Abe method to \(\eps = 10^{-4}\), \(\tau_{\min} = 10^{-3}\), \(\tau_{\max} = 10^3\), and \(\eta = 10^{-2}\). See \figref{fig:wavelet_SSIM} for the numerical results.

\begin{figuretmp}
\begin{subfigure}{0.33\textwidth}
\begin{center}
\vspace{0.05cm}
\includegraphics[height=4cm]{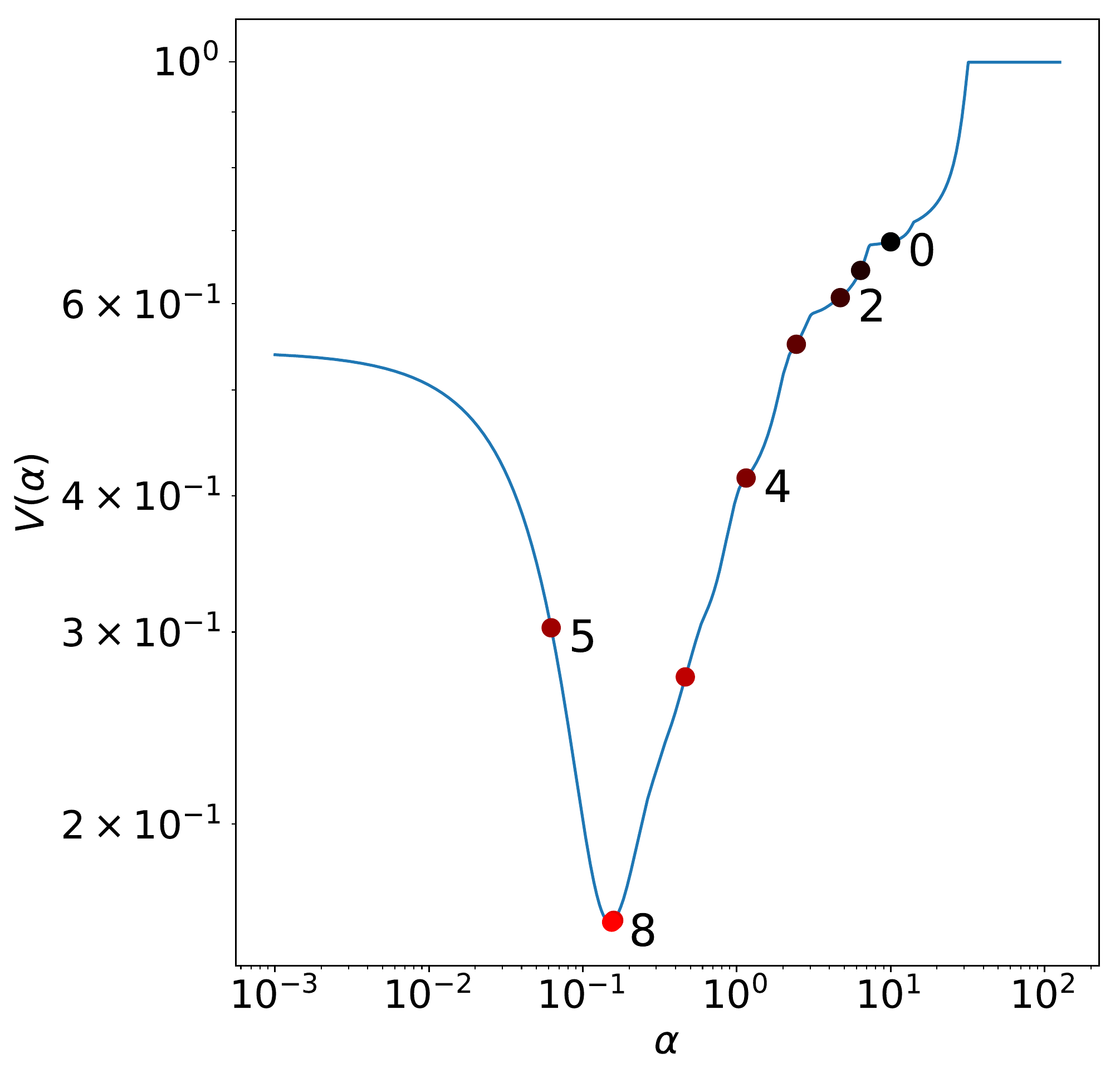}
\end{center}
\vspace{-0.35cm}
\caption{Plot with labels.}
\vspace{0.3cm}
\end{subfigure}
\begin{subfigure}{0.33\textwidth}
\begin{center}
\includegraphics[height=4cm]{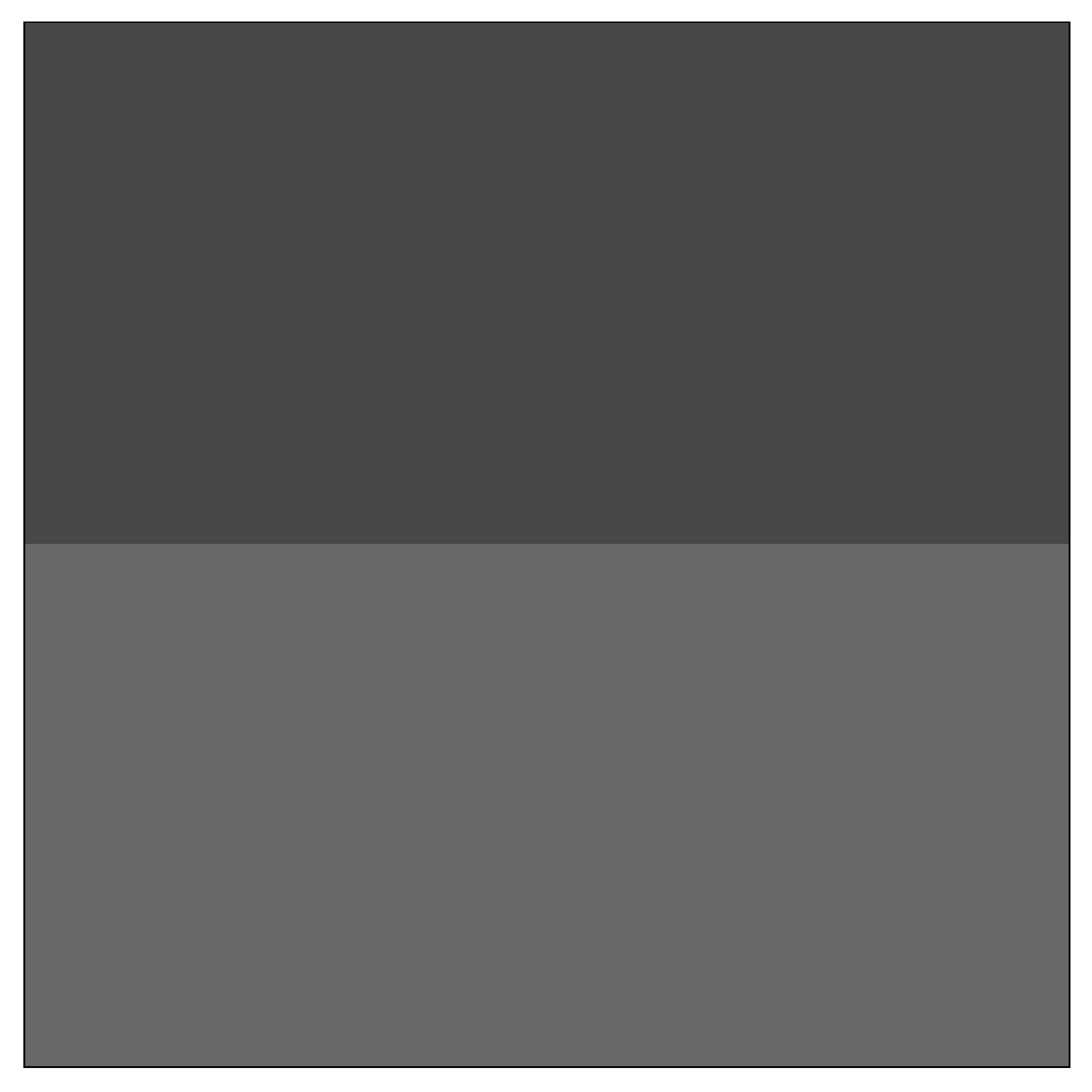}
\end{center}
\vspace{-0.35cm}
\caption{\(k = 0.\) \(\alpha = 10.0\).}
\vspace{0.3cm}
\end{subfigure}
\begin{subfigure}{0.33\textwidth}
\begin{center}
\includegraphics[height=4cm]{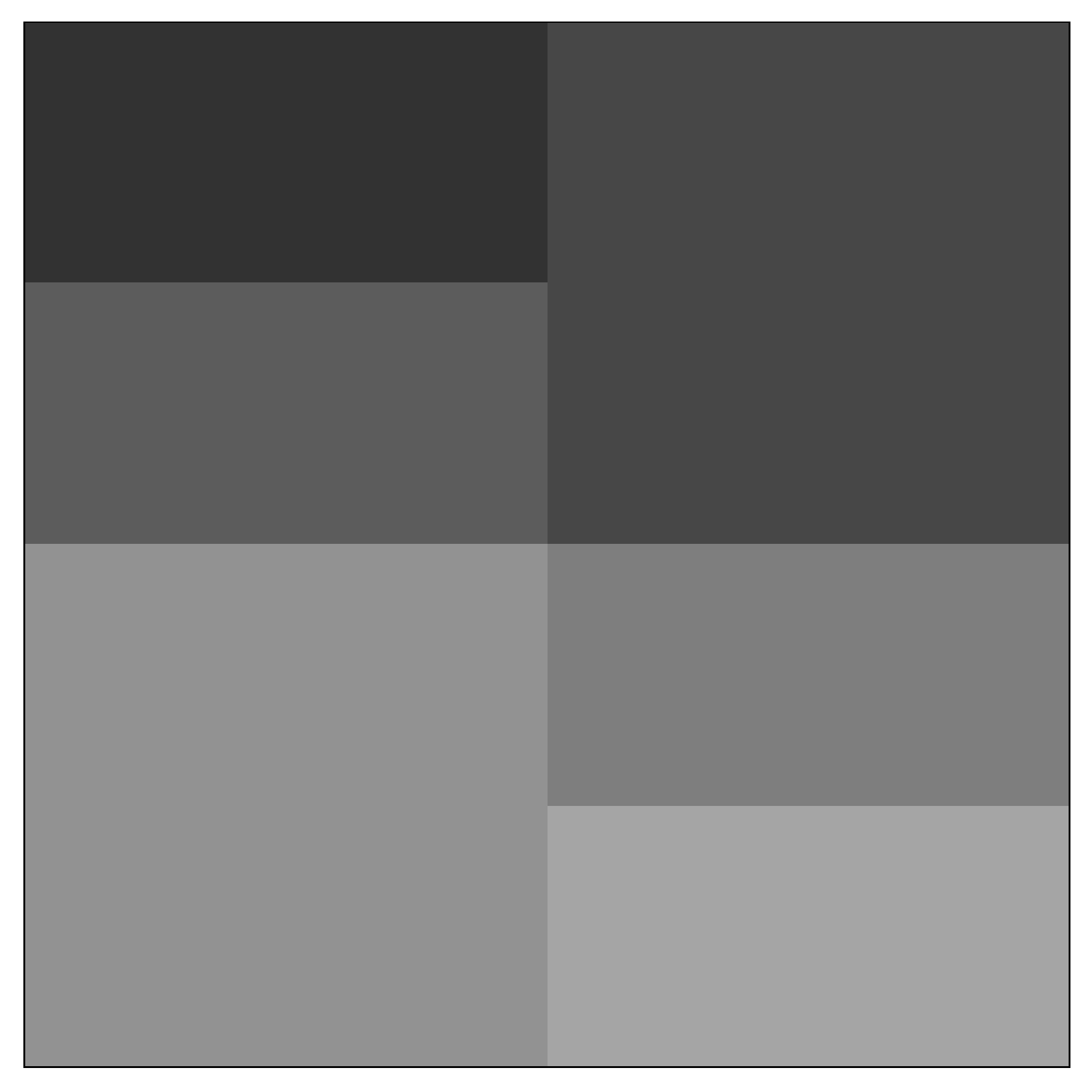}
\end{center}
\vspace{-0.35cm}
\caption{\(k= 2\). \(\alpha = 4.71\).}
\vspace{0.3cm}
\end{subfigure}
\begin{subfigure}{0.33\textwidth}
\begin{center}
\includegraphics[height=4cm]{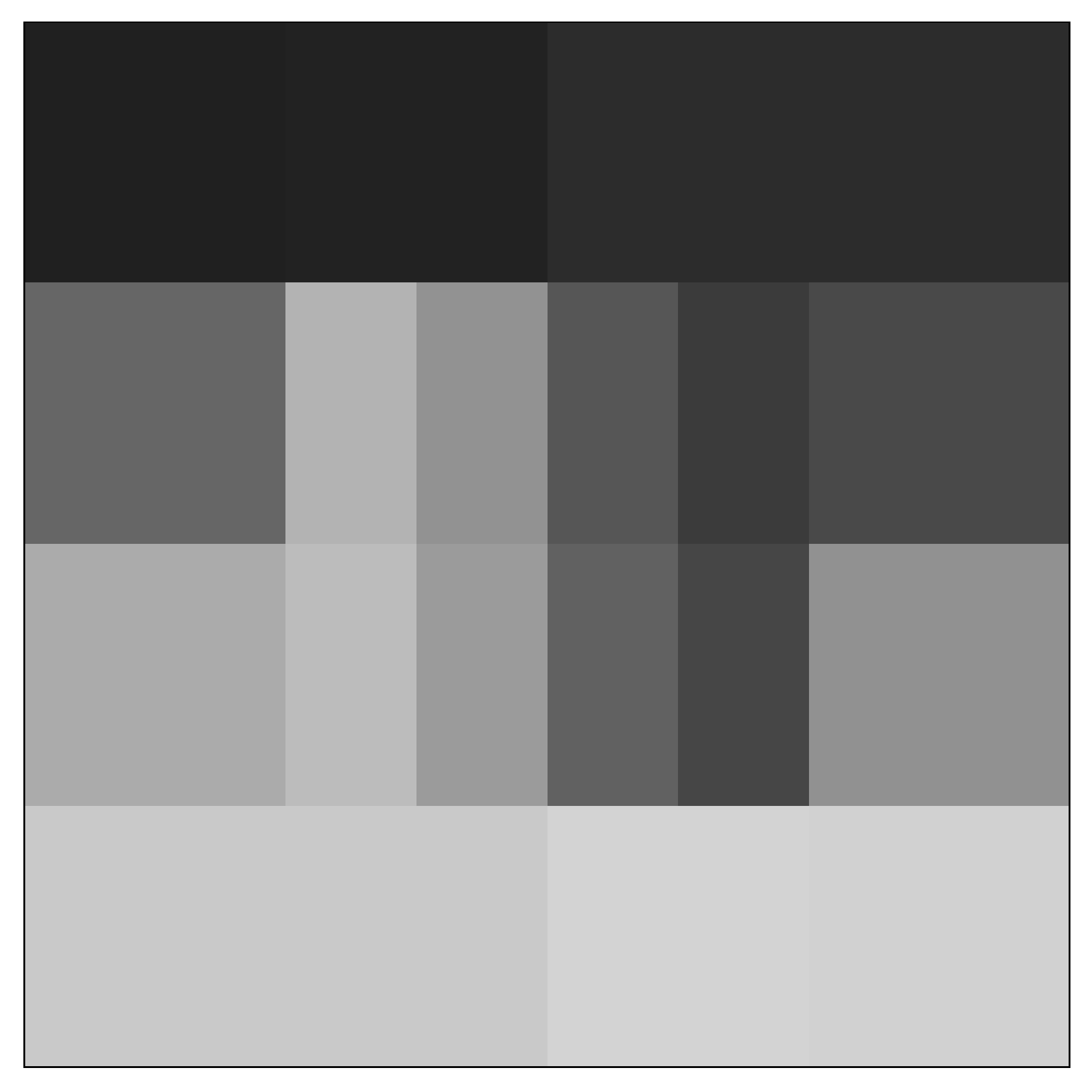}
\end{center}
\vspace{-0.35cm}
\caption{\(k=4\). \(\alpha = 1.15\).}
\end{subfigure}
\begin{subfigure}{0.33\textwidth}
\begin{center}
\includegraphics[height=4cm]{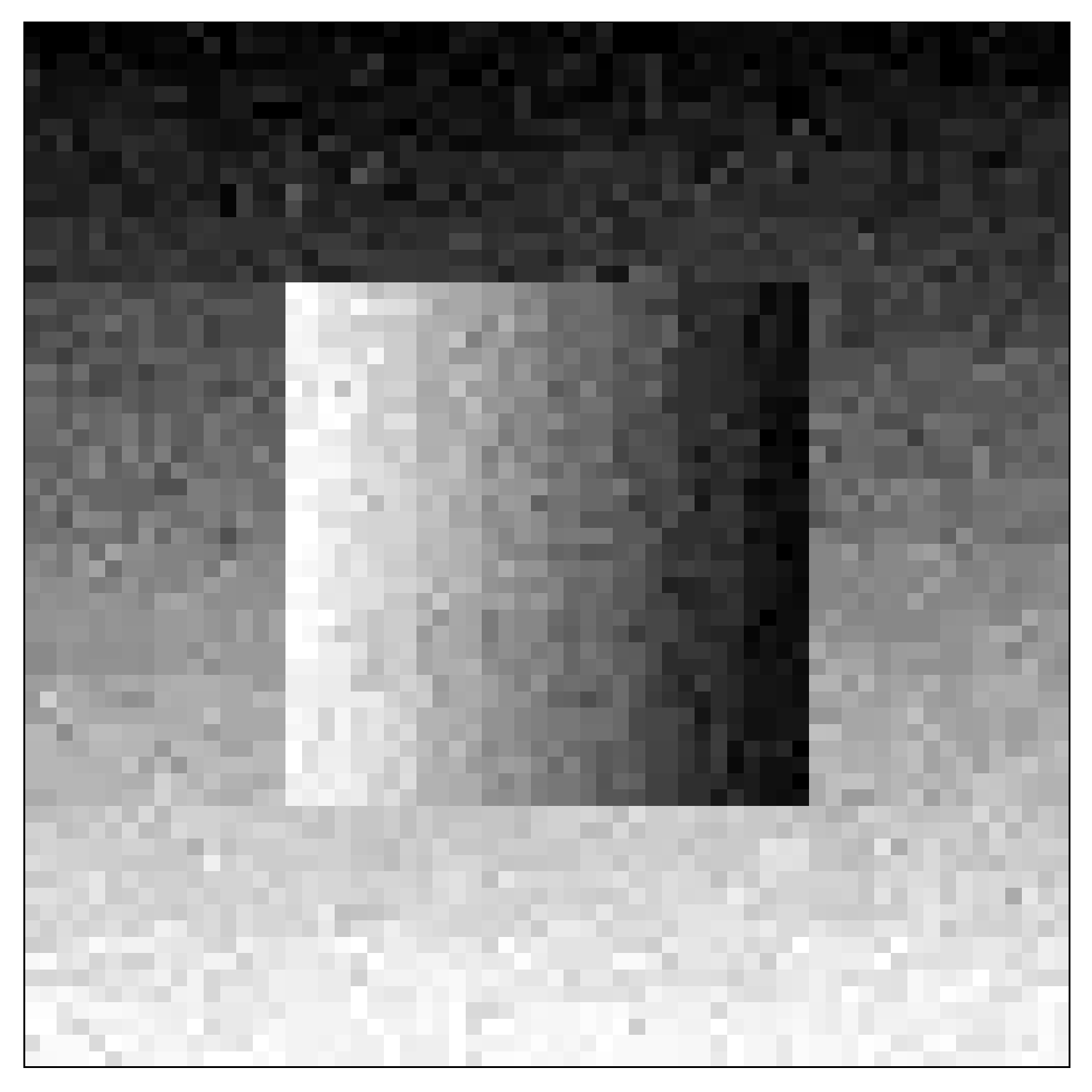}
\end{center}
\vspace{-0.35cm}
\caption{\(k=5\). \(\alpha = 6.23 \times 10^{-2}\).}
\end{subfigure}
\begin{subfigure}{0.33\textwidth}
\begin{center}
\includegraphics[height=4cm]{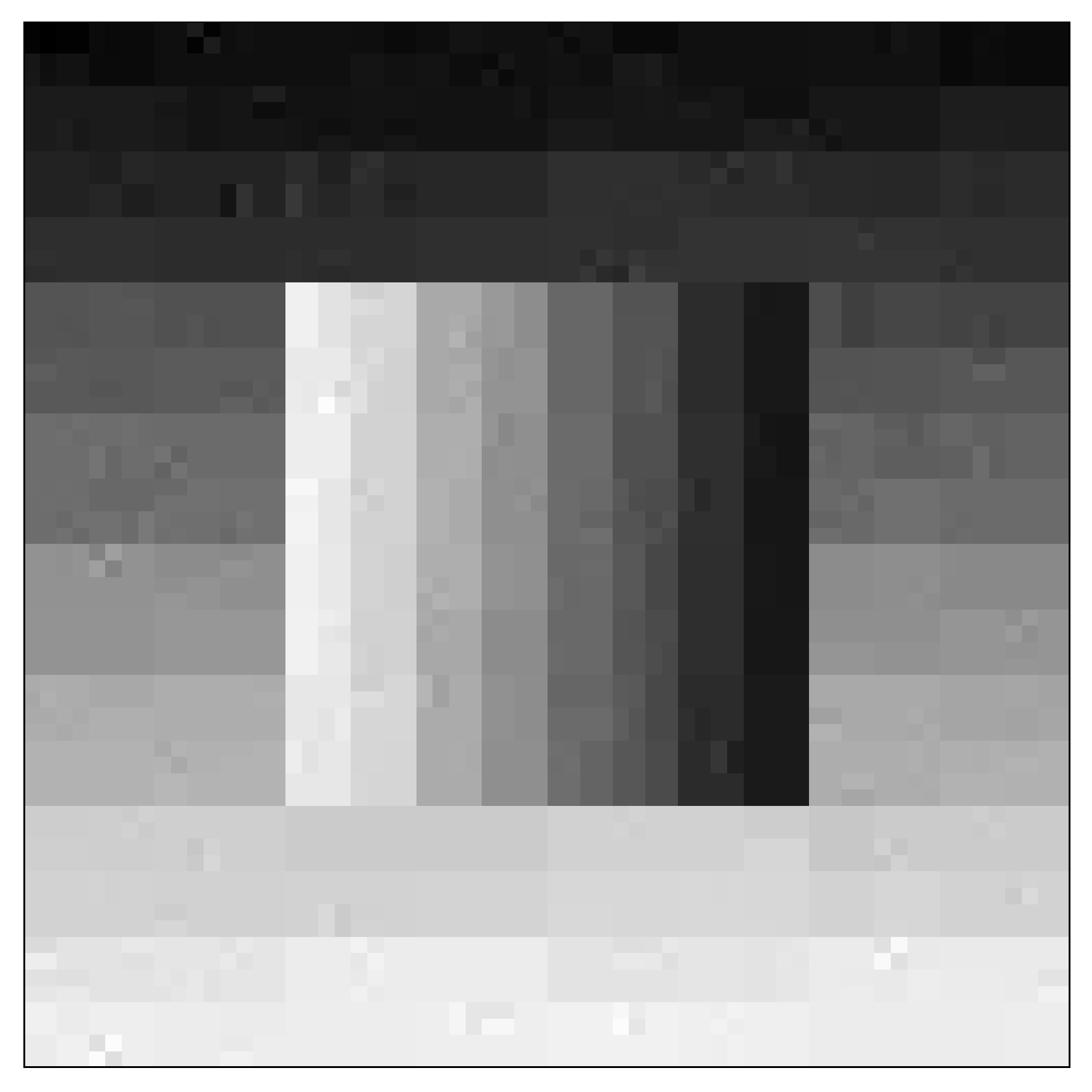}
\end{center}
\vspace{-0.35cm}
\caption{\(k=8\). \(\alpha = 1.54 \times 10^{-1}\).}
\end{subfigure}
\caption{Wavelet denoising with SSIM scoring function and the Itoh--Abe method. Top left: Plot of iterates of the Itoh--Abe method. The rest: Image denoising result at different iterates \(k\).}
\label{fig:wavelet_SSIM}
\end{figuretmp}

\subsubsection{Total variation denoising}

We consider the TV denoising problem
\begin{equation*}
u_\alpha = \argmin_{u \in L^2(\Omega)} \frac{1}{2}\|u - f^\delta\|^2 + \alpha \TV(u),
\end{equation*}
with the SSIM scoring function. We solve the above denoising problem using the PDHG method \citep{cha11}. We set the parameters of the Itoh--Abe method to \(\eps = 10^{-4}\), \(\tau_{\min} = 10^{-5}\), \(\tau_{\max} = 9 \times 10^{-4}\), and \(\eta = 10^{-5}\). See \figref{fig:TV_SSIM} for the numerical results.

\newcommand{\PlotFigZoom}[4]{%
    \begin{tikzpicture}[spy using outlines={every spy on node/.append style={thin}}]
    \draw[white] (0cm, 0cm) node [anchor=south west] {\includegraphics[height=#3]{#2}};%
    \spy [white, draw, width = 2.5cm, height = 2.5cm, magnification = 3,connect spies] on #1 in node [left] at (3.6,-0.3);
    \draw[white] (0.9cm, -.1cm) node [anchor=center] {\footnotesize #4};%
    \end{tikzpicture}}%

\begin{figuretmp}
\begin{subfigure}{0.66\textwidth}
\begin{center}
\includegraphics[height=5.3cm]{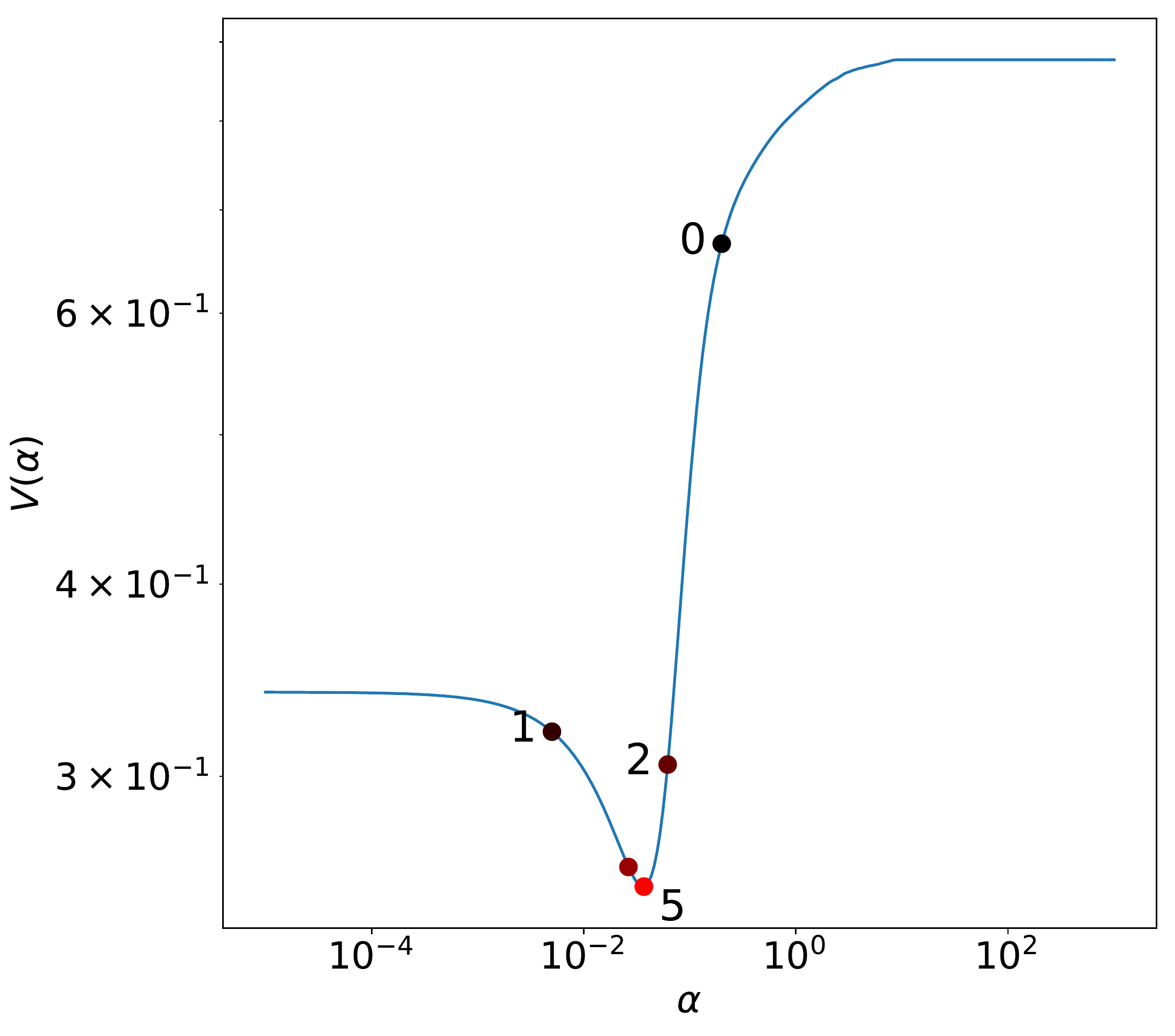}
\end{center}
\vspace{-0.4cm}
\caption{Plot with labels.}
\end{subfigure}
\begin{subfigure}{0.33\textwidth}
\PlotFigZoom{(2.1,2.3)}{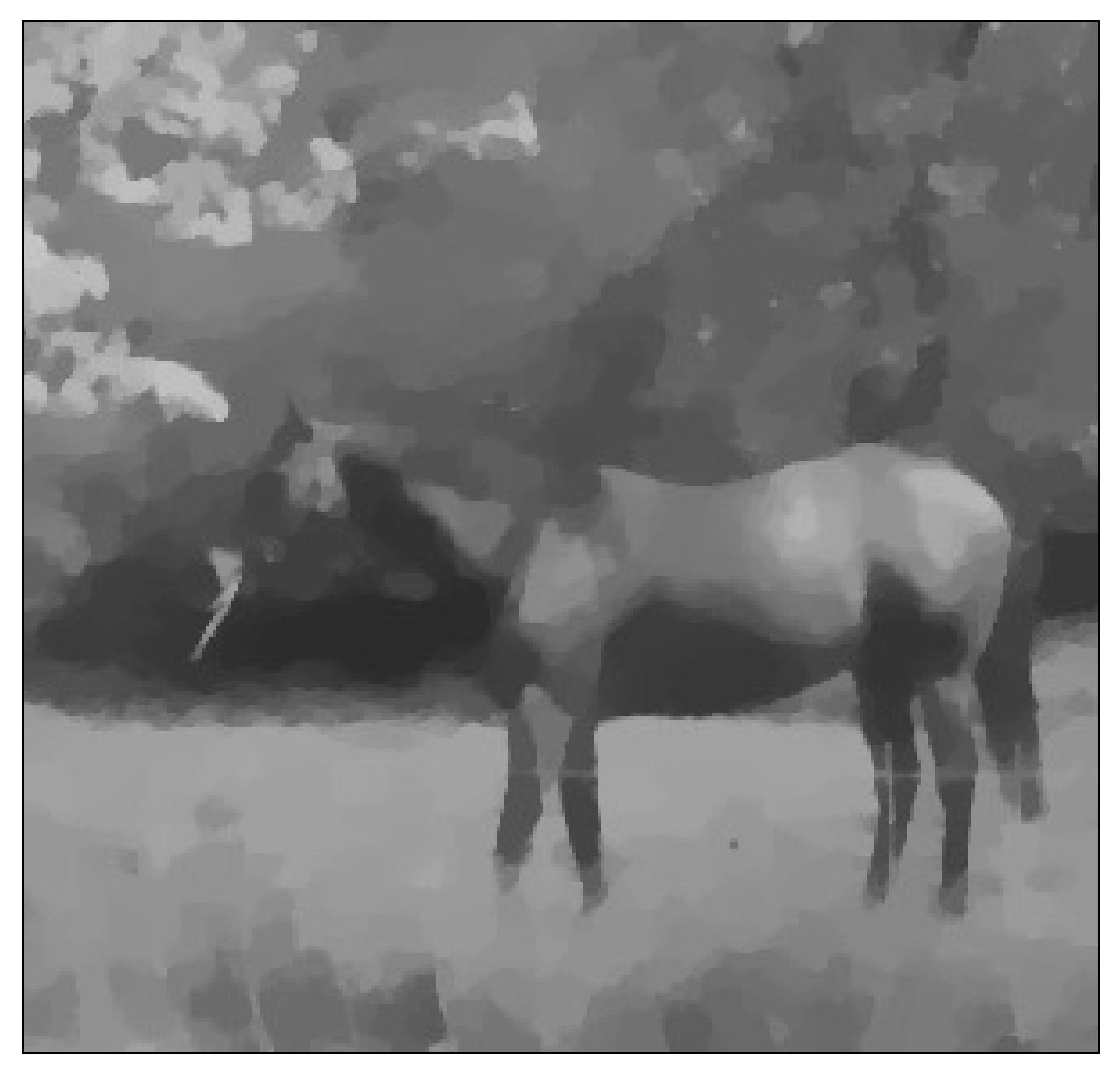}{3.6cm}{}
\caption{\(k = 0.\) \(\alpha = 2.00 \times 10^{-1}\).}
\end{subfigure}
\begin{subfigure}{0.33\textwidth}
\PlotFigZoom{(2.1,2.3)}{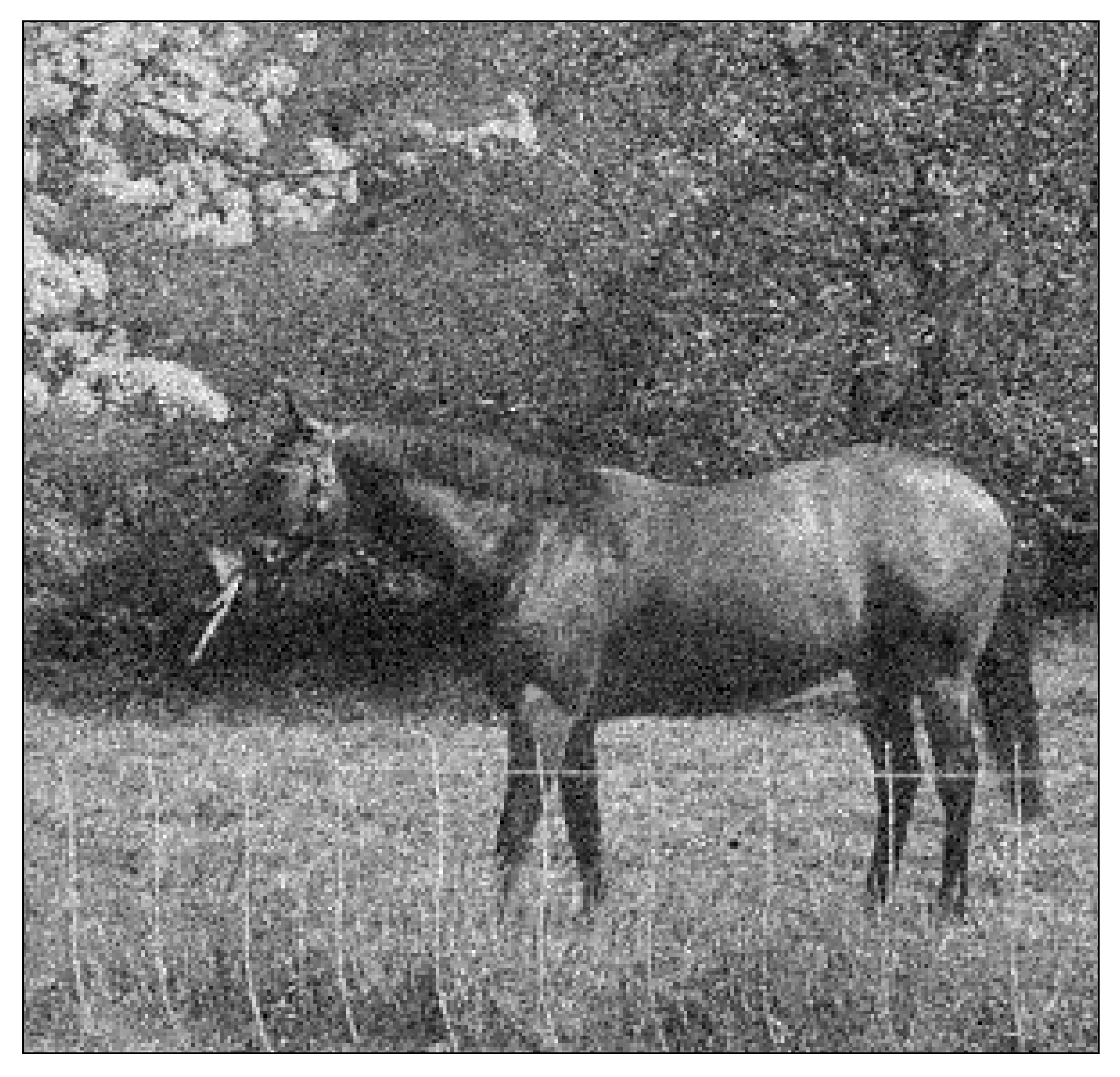}{3.6cm}{}
\caption{\(k = 0.\) \(\alpha = 2.00 \times 10^{-1}\).}
\end{subfigure}
\begin{subfigure}{0.33\textwidth}
\PlotFigZoom{(2.1,2.3)}{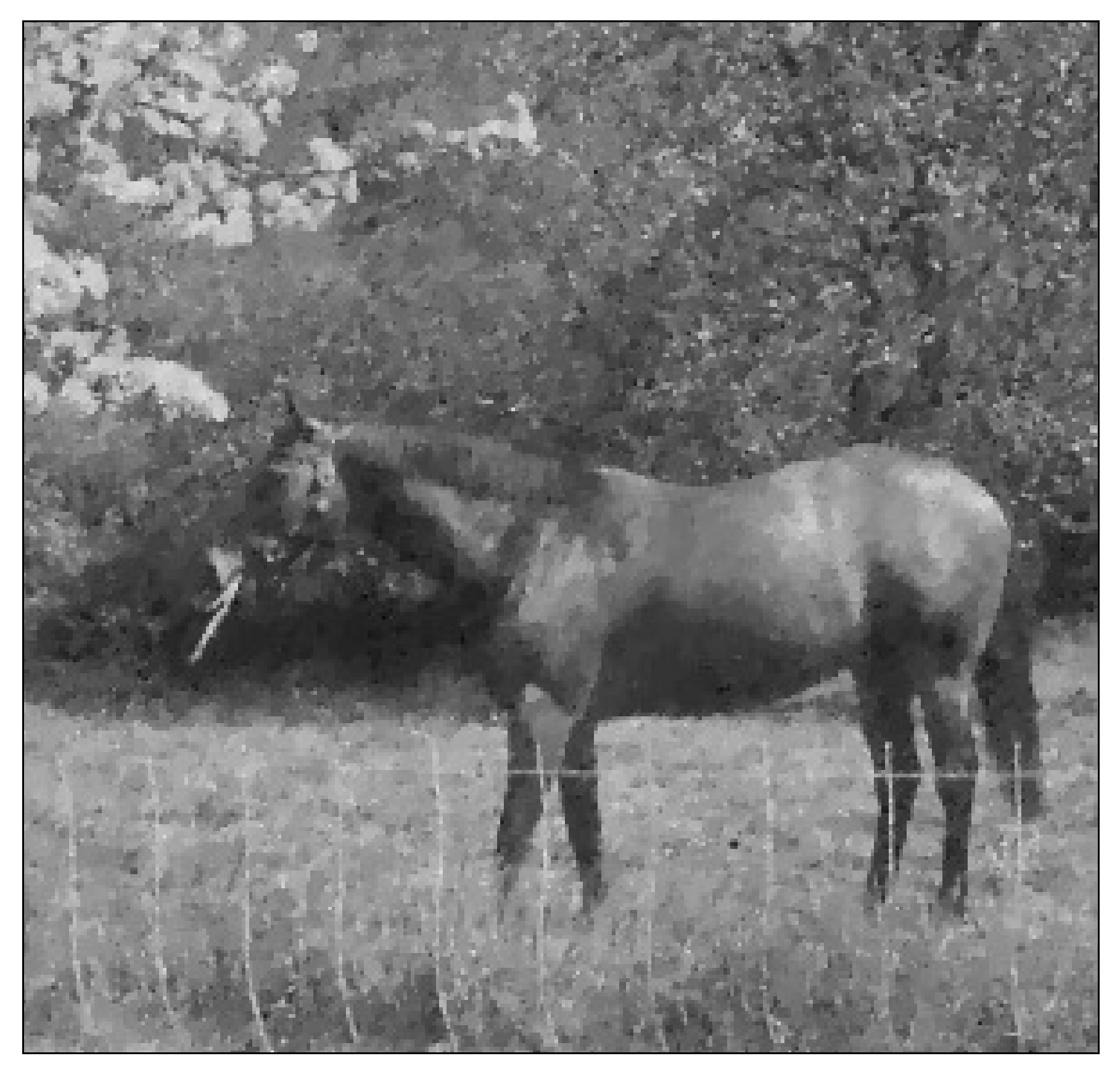}{3.6cm}{}
\caption{\(k = 0.\) \(\alpha = 2.00 \times 10^{-1}\).}
\end{subfigure}
\begin{subfigure}{0.33\textwidth}
\PlotFigZoom{(2.1,2.3)}{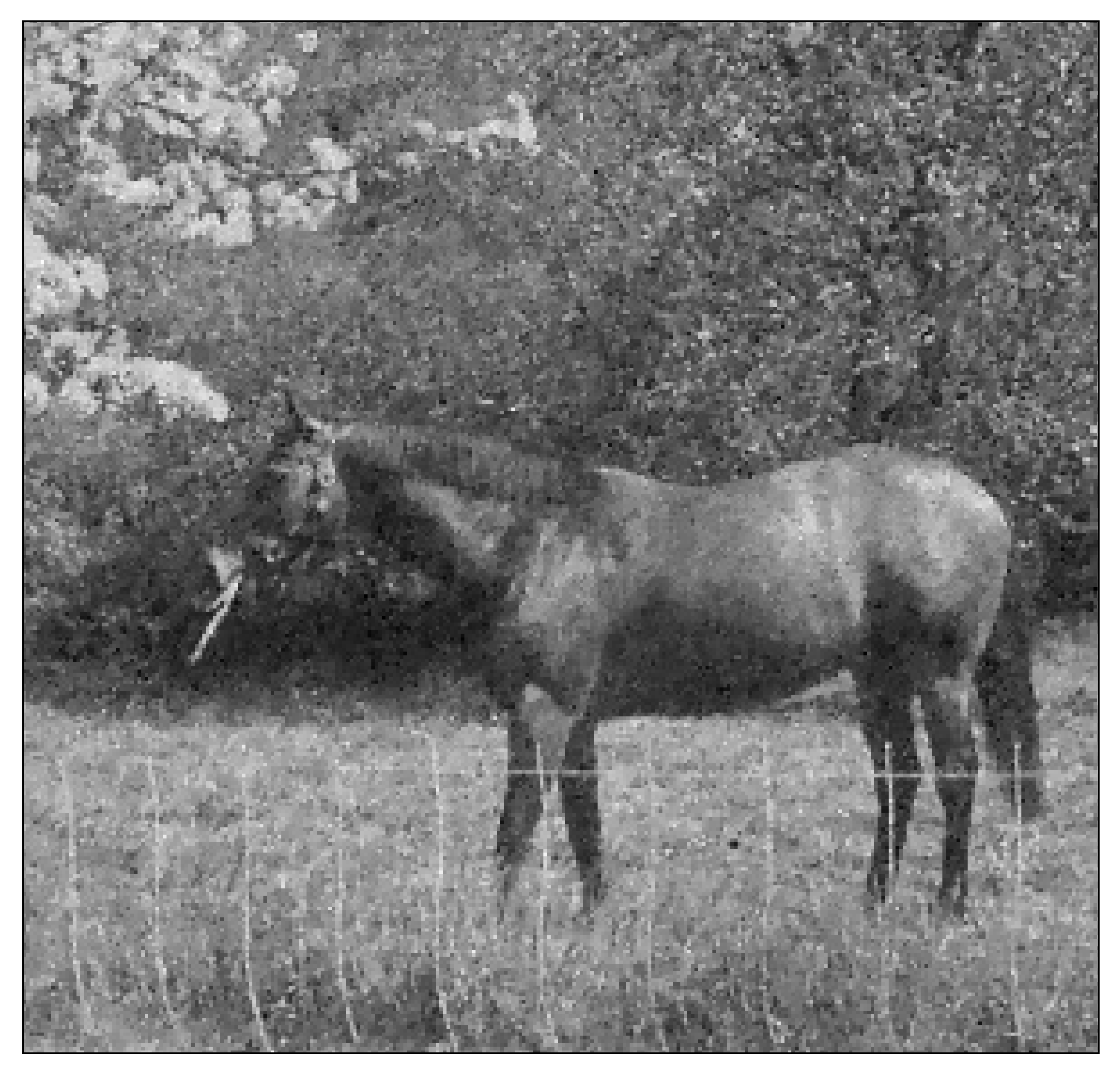}{3.6cm}{}
\caption{\(k = 0.\) \(\alpha = 2.00 \times 10^{-1}\).}
\end{subfigure}
\caption{TV denoising with SSIM scoring function and the Itoh--Abe method. Top left: Plot of iterates of the Itoh--Abe method. The rest: Image denoising result at different iterates \(k\), with a zoom to show the difference.}
\label{fig:TV_SSIM}
\end{figuretmp}

\subsubsection{Total generalised variation regularisation}

We now consider the second-order total generalised variation (TGV) regulariser \(R_{\alpha_1,\alpha_2}(u) = \TGV_{\alpha_1,\alpha_2}^2(u)\) for image denoising, with the scoring function
\[
\Phi(u) := 1 - \SSIM(u, u^\dagger).
\]
Like for TV denoising, we solve the denoising problem using the PDHG method. We set the parameters of the randomised Itoh--Abe (RIA) method to \(\eps = 10^{-1}\), \(\tau_{\min} = 10^{-3}\), \(\tau_{\max} = 10^5\), and \(\eta = 10^{-20}\). See \figref{fig:tgv_dg} for the numerical results.

\begin{figuretmp}
\centering
\begin{subfigure}{0.49\textwidth}
\centering
\includegraphics[height=5.5cm]{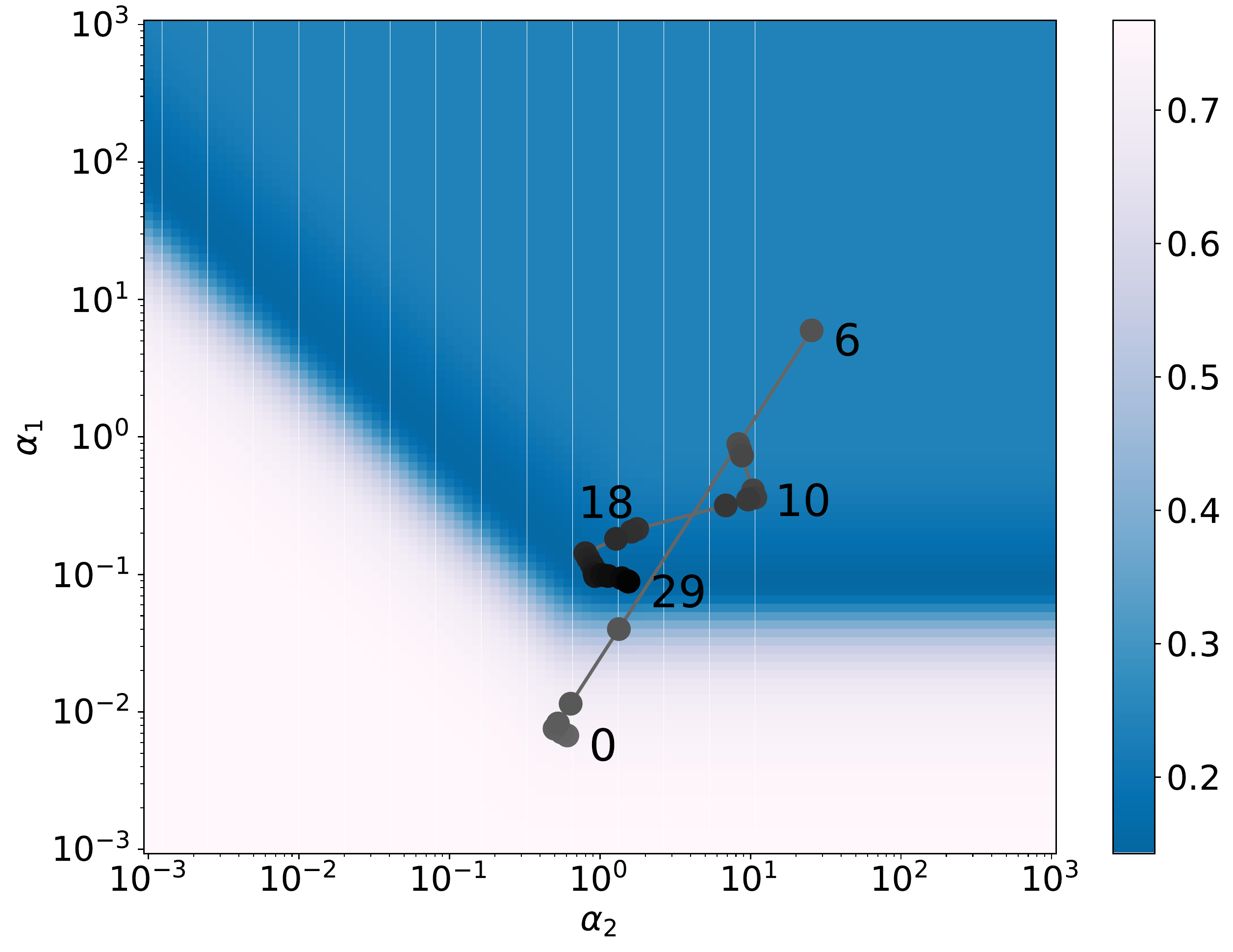}
\caption{}
\end{subfigure}
\begin{subfigure}{0.48\textwidth}
\centering
\includegraphics[height=5.5cm]{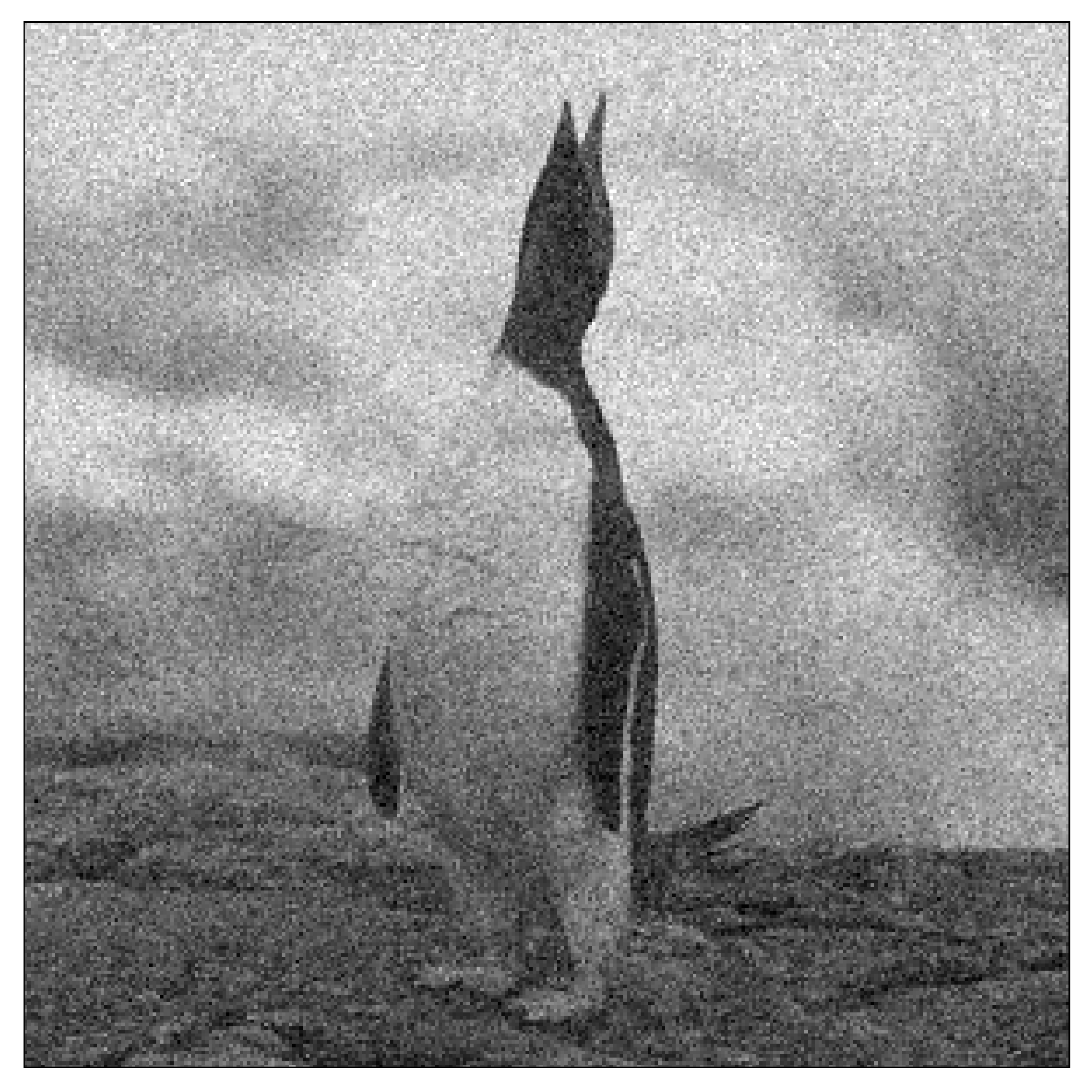}
\caption{\(j= 0\), \(\quad \alpha_1 = 6.74 \times 10^{-3}\), \(\quad \alpha_2 = 6.07 \times 10^{-1}\).}
\end{subfigure}
\begin{subfigure}{0.48\textwidth}
\centering
\includegraphics[height=5.5cm]{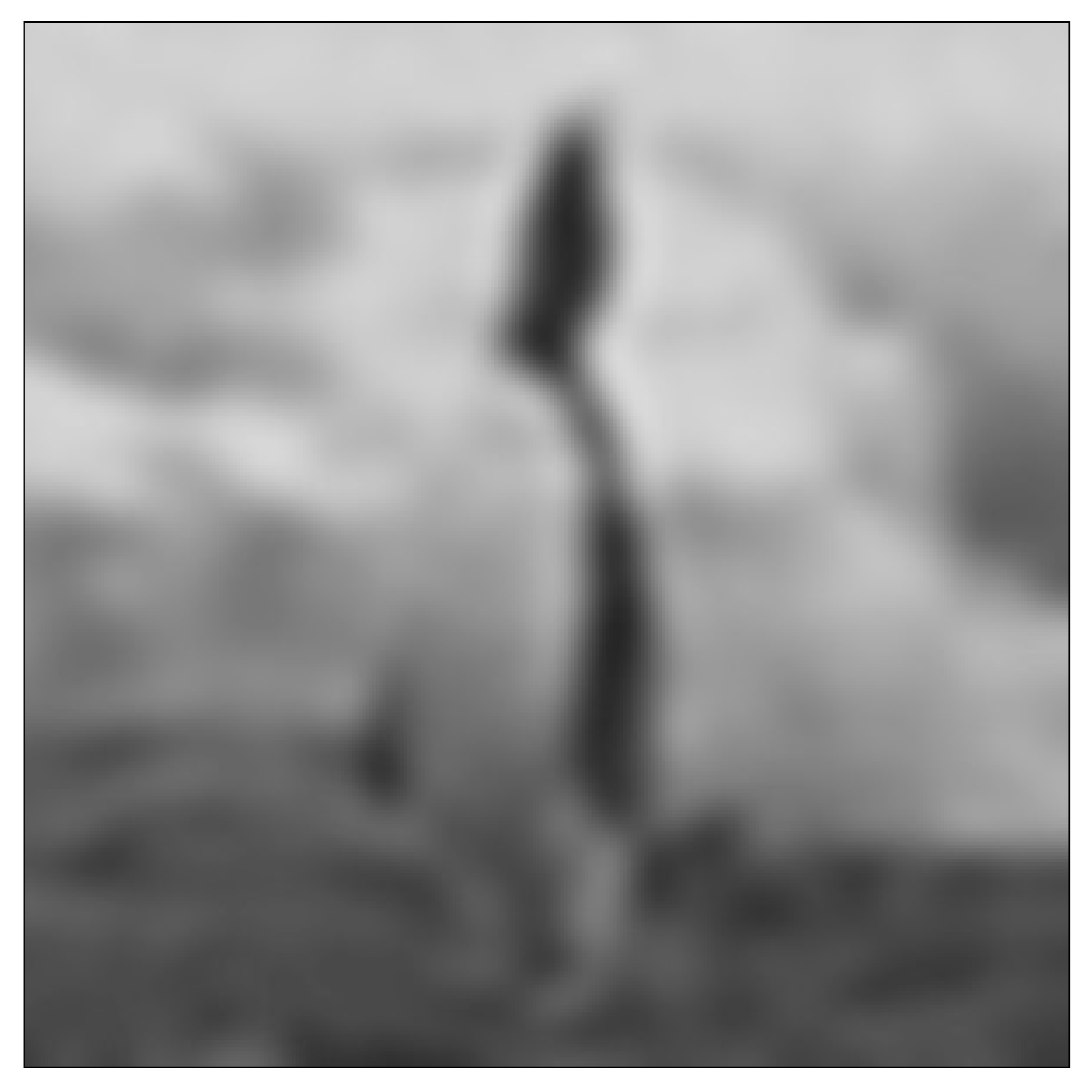}
\caption{\(j= 6,\quad \alpha_1 = 5.96,\quad \alpha_2 = 25.5\).}
\end{subfigure}
\begin{subfigure}{0.48\textwidth}
\centering
\includegraphics[height=5.5cm]{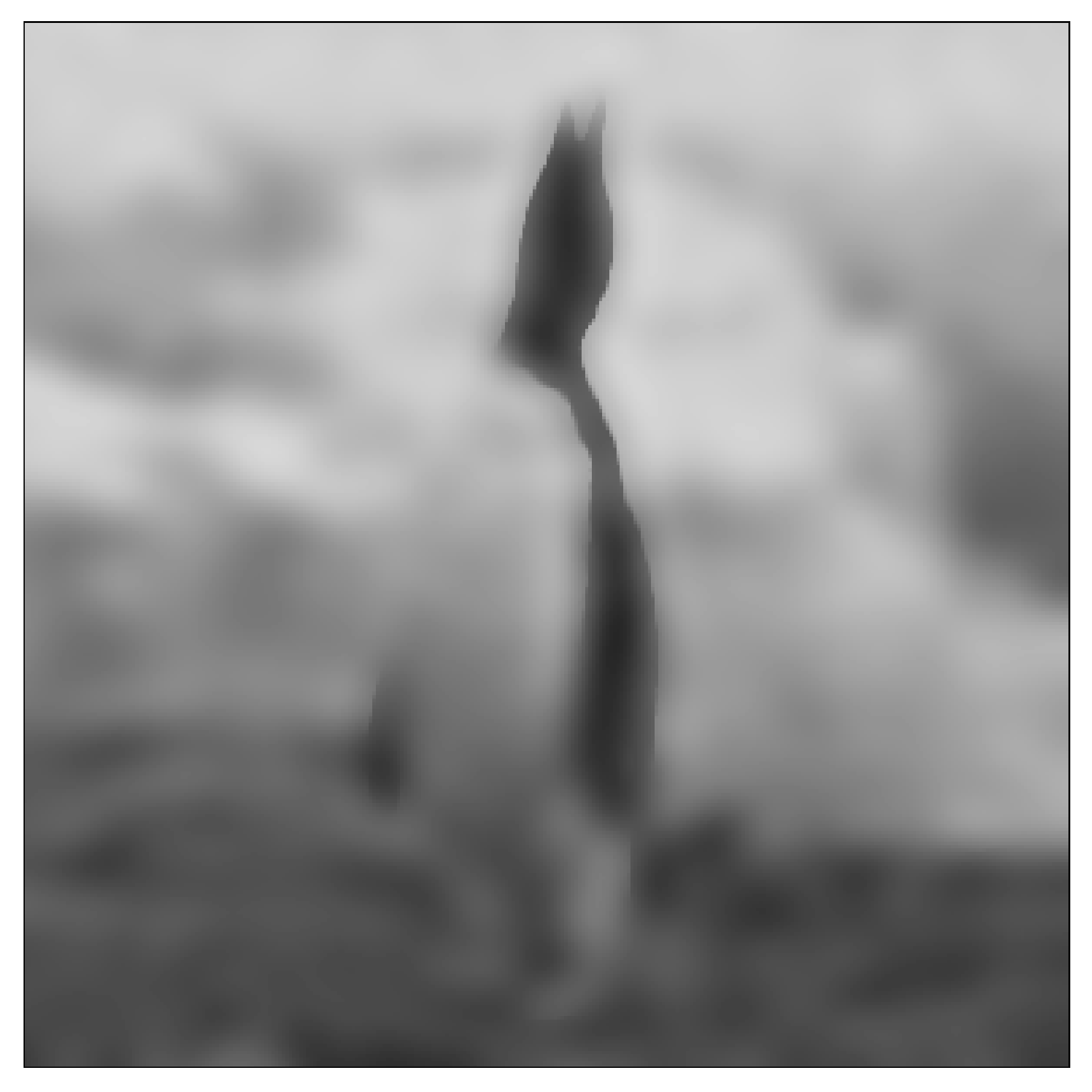}
\caption{\(j= 10, \quad \alpha_1 = 4.09 \times 10^{-1}, \quad \alpha_2 = 10.4\).}
\end{subfigure}
\begin{subfigure}{0.48\textwidth}
\centering
\includegraphics[height=5.5cm]{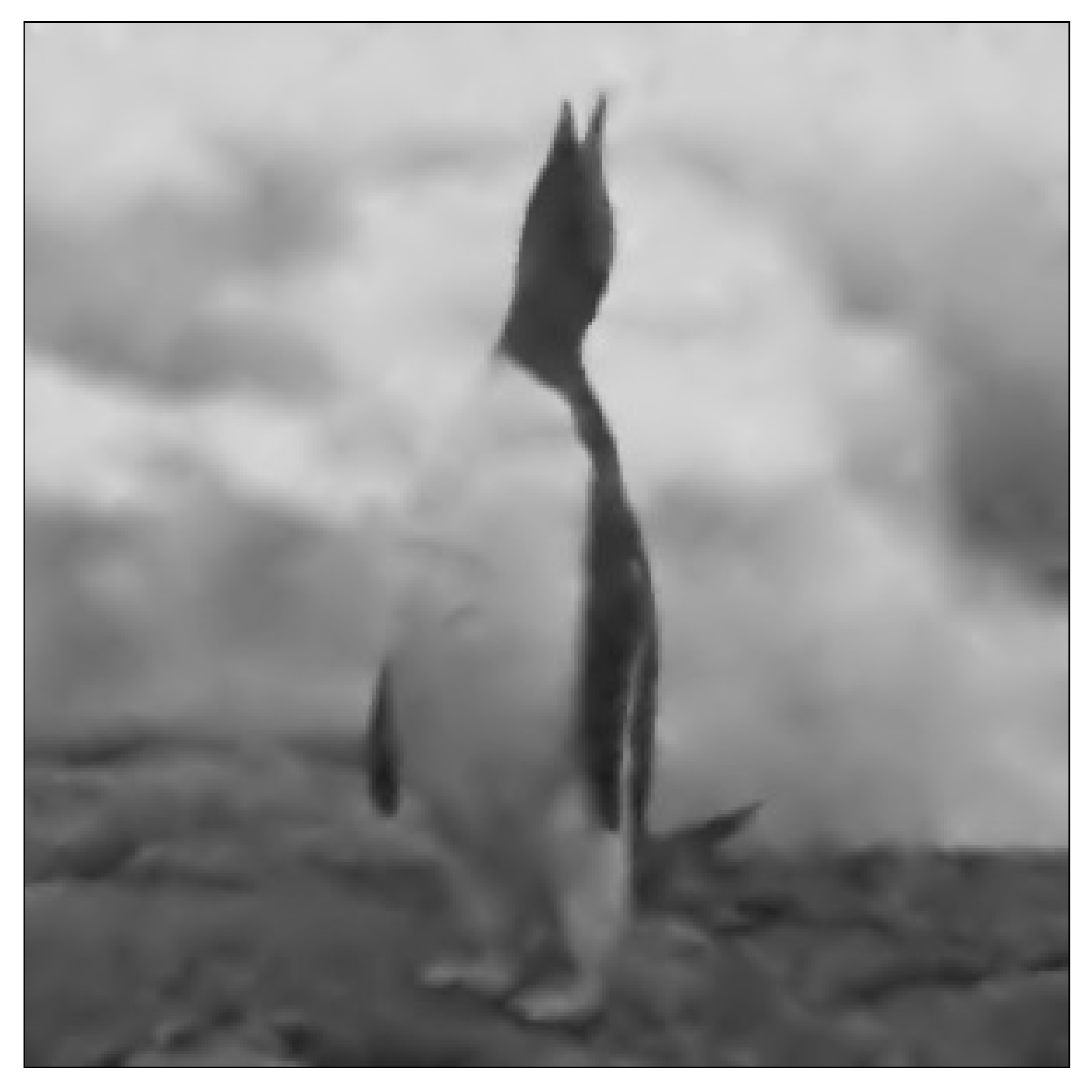}
\caption{\(j= 18, \quad \alpha_1 = 1.43 \times 10^{-1}, \quad \alpha_2 = 7.99 \times 10^{-1}\).}
\end{subfigure}
\begin{subfigure}{0.48\textwidth}
\centering
\includegraphics[height=5.5cm]{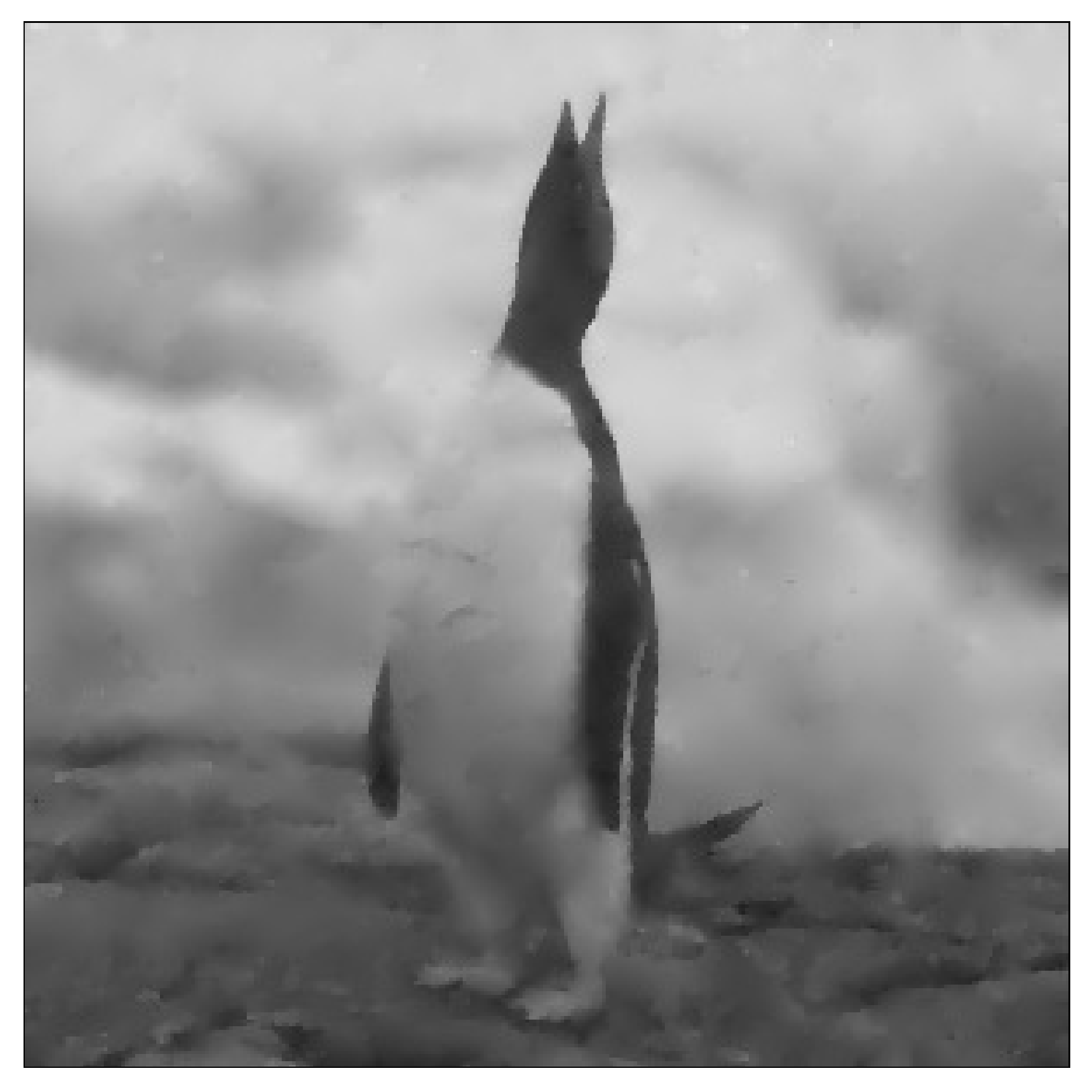}
\caption{\(j= 29\), \(\alpha_1 = 8.87 \times 10^{-2}\), \(\alpha_2 = 1.55\).}
\end{subfigure}
\caption{TGV denoising with SSIM scoring function and the Itoh--Abe method. Top left: Plot of iterates of the method. The rest: Image denoising result at different function evaluations \(j\).}
\label{fig:tgv_dg}
\end{figuretmp}

We compare these results to the results from the Py-BOBYQA and LT-MADS solvers. We set the parameters of Py-BOBYQA to \(\mbox{rhobeg} = 2\), \(\mbox{rhoend} = 10^{-10}\) and \(\mbox{npt} = 2(n+1)\) and the parameters of LT-MADS to \(\mbox{DIRECTION\_TYPE} = \mbox{LT } 2N\). See the results for two different starting points in \figref{fig:tgv_rate} and \ref{fig:tgv_rate_2}. We note that the objective function is approximately stationary across a range of values, which leads to the different points of convergence, and different limiting values of the objective function for different methods. We see that the methods are all of comparable efficiency, although the Itoh--Abe method is slower initially. The Itoh--Abe method seems to be the most efficient, once it is within a neighborhood of the minimiser.

\begin{figuretmp}
\begin{subfigure}{0.45\textwidth}
\begin{tikzpicture}
\draw (0,0) node[inner sep = 0] {\includegraphics[height=5cm]{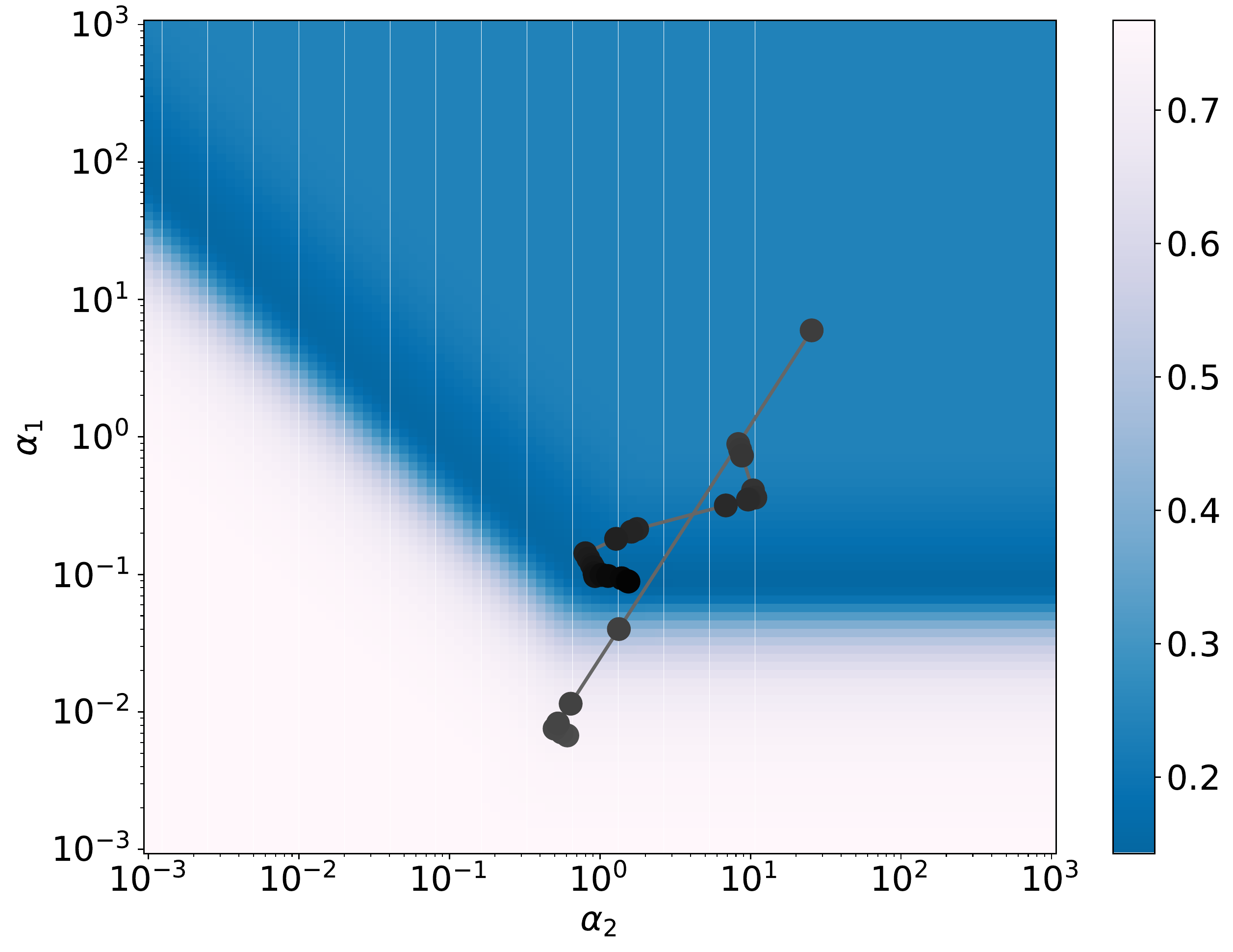}};
\node[draw, fill = white] at (-2,-1.7) {\scriptsize RIA} ;
\end{tikzpicture}
\end{subfigure}
\begin{subfigure}{0.45\textwidth}
\begin{tikzpicture}
\draw (0,0) node[inner sep = 0] {\includegraphics[height=5cm]{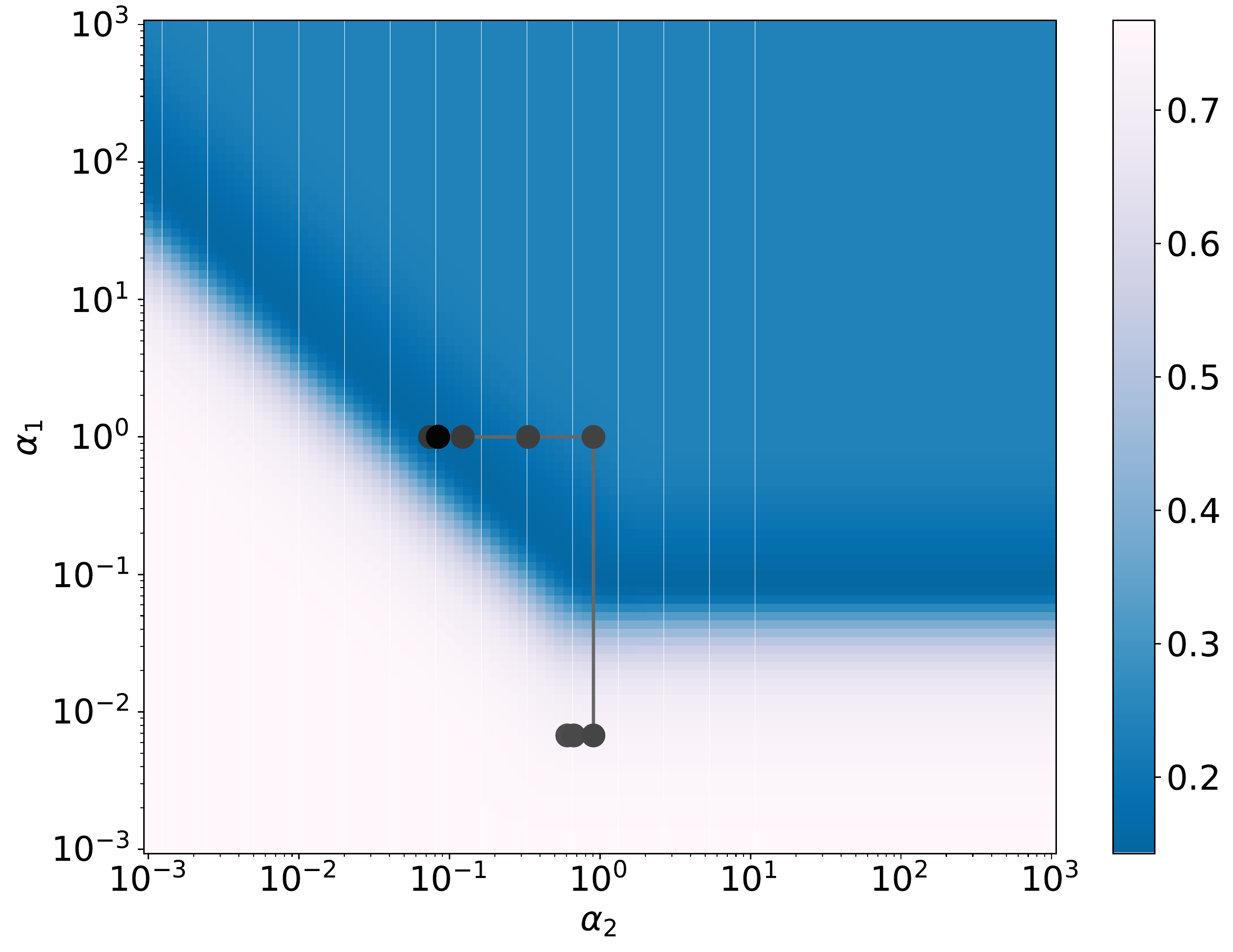}};
\node[draw, fill = white] at (-1.7,-1.7) {\scriptsize LT-MADS} ;
\end{tikzpicture}
\end{subfigure}
\begin{subfigure}{0.45\textwidth}
\begin{tikzpicture}
\draw (0,0) node[inner sep = 0] {\includegraphics[height=5cm]{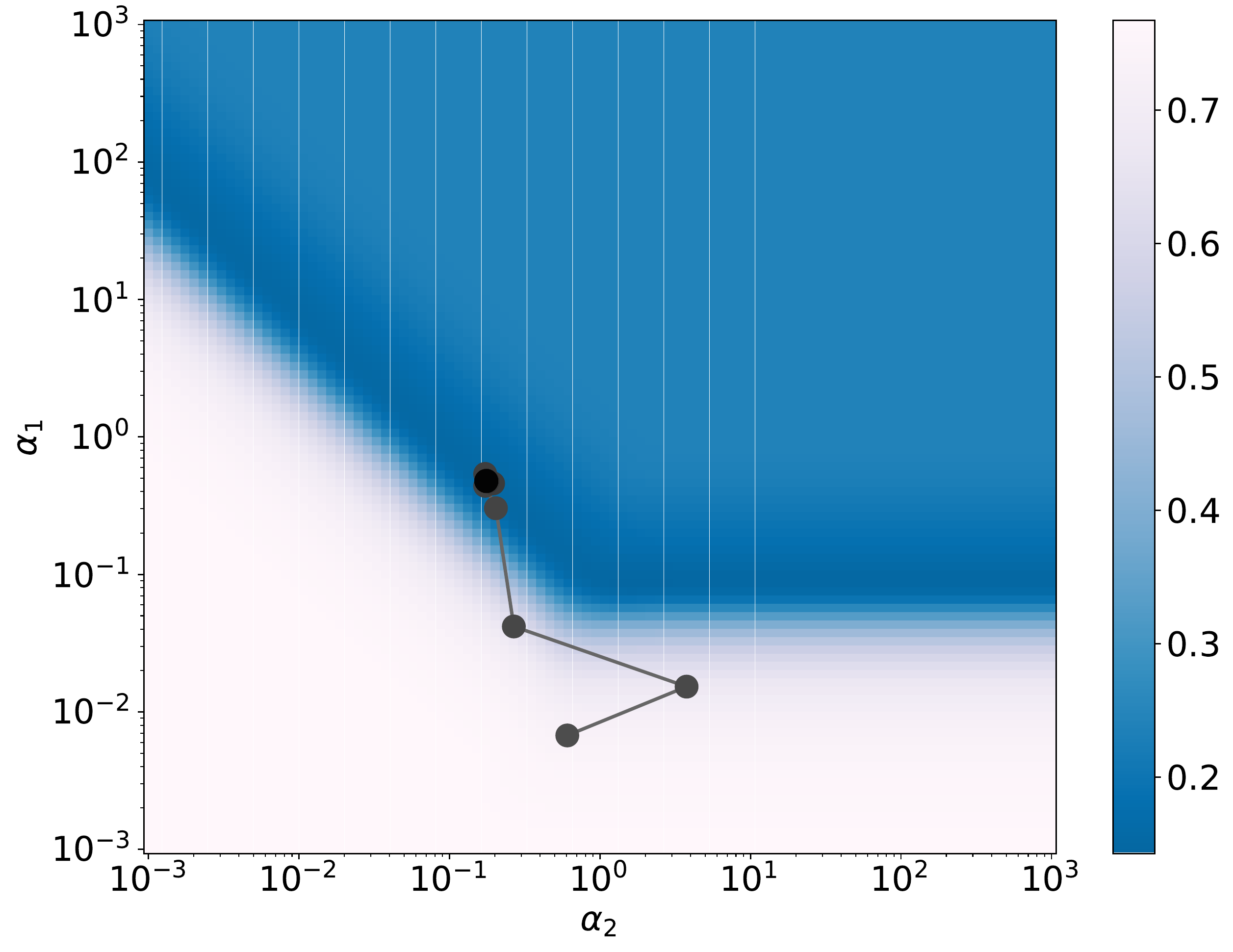}};
\node[draw, fill = white] at (-1.55,-1.7) {\scriptsize Py-BOBYQA} ;
\end{tikzpicture}
\end{subfigure}
\hspace{0.93cm}
\begin{subfigure}{0.45\textwidth}
\includegraphics[height=5cm]{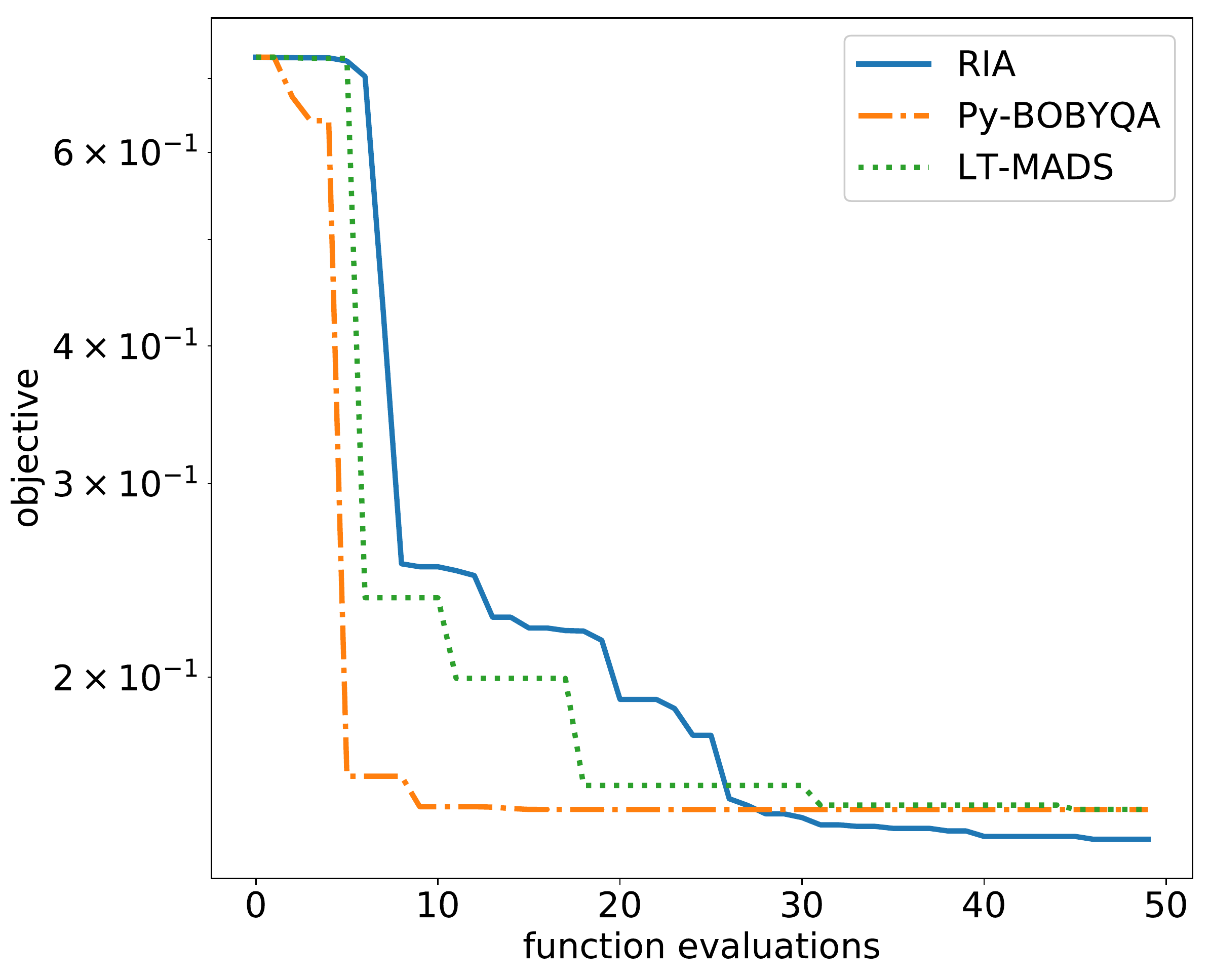}
\end{subfigure}
\caption{Comparison of optimisation methods for TGV denoising with SSIM scoring function. Top left: Plot of iterates of the Itoh--Abe method. Top right: Plot of iterates of the LT-MADS method. Bottom left: Plot of iterates of the Py-BOBYQA method. Bottom right: Comparison of convergence rates for the methods with respect to function evaluations.}
\label{fig:tgv_rate}
\end{figuretmp}

\begin{figuretmp}
\begin{subfigure}{0.45\textwidth}
\begin{tikzpicture}
\draw (0,0) node[inner sep = 0] {\includegraphics[height=5cm]{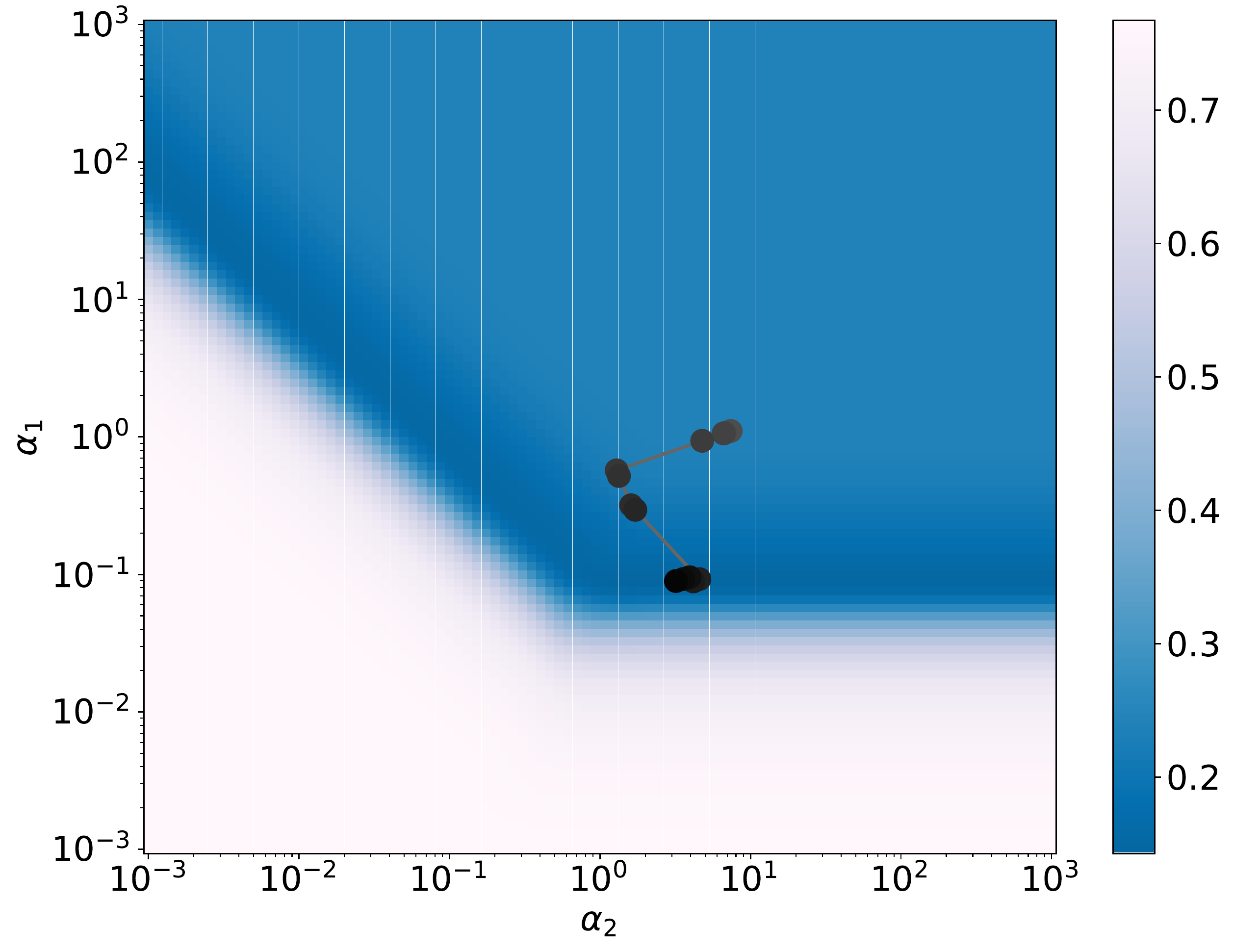}};
\node[draw, fill = white] at (-2.05,-1.7) {RIA} ;
\end{tikzpicture}
\end{subfigure}
\begin{subfigure}{0.45\textwidth}
\begin{tikzpicture}
\draw (0,0) node[inner sep = 0] {\includegraphics[height=5cm]{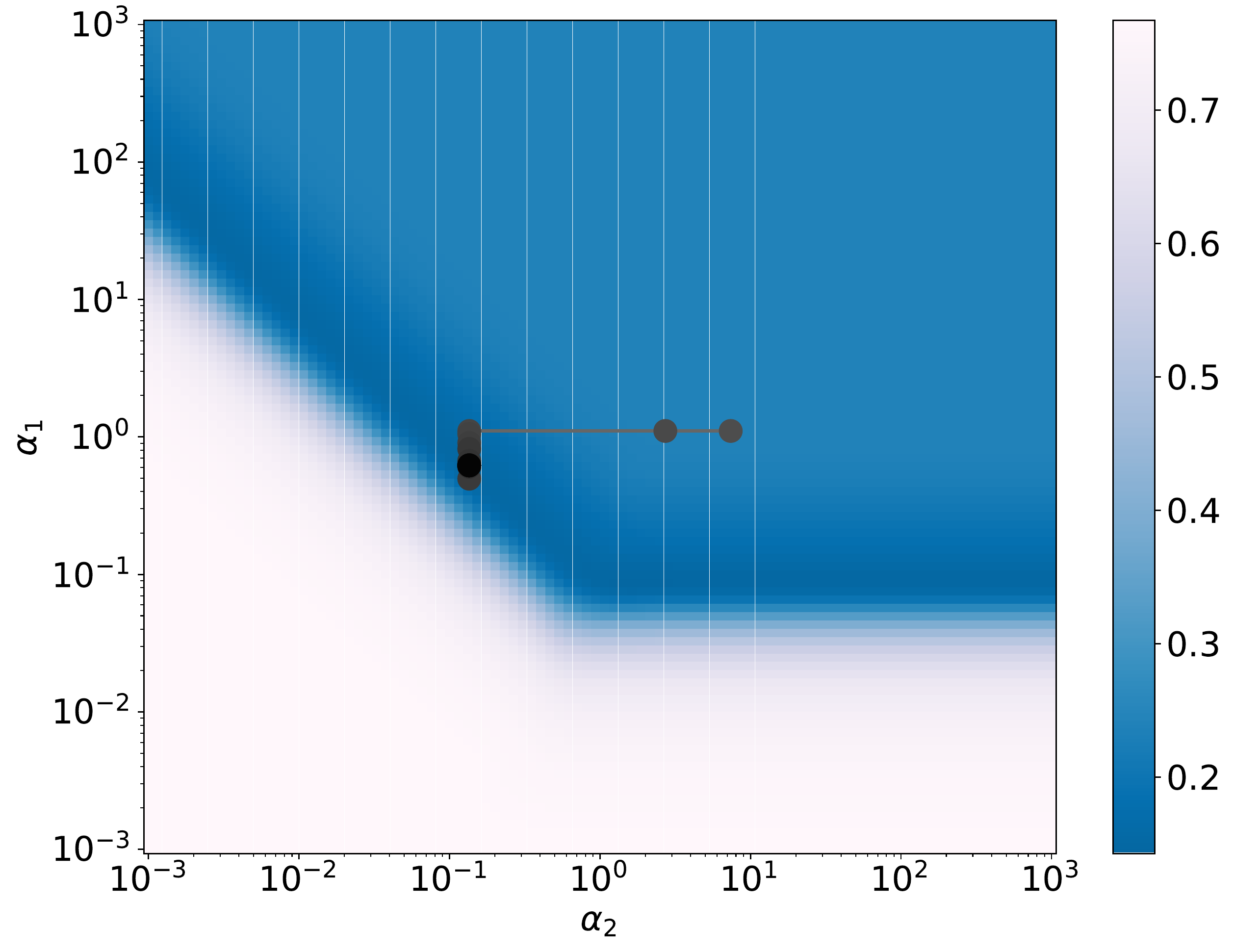}};
\node[draw, fill = white] at (-1.65,-1.7) {LT-MADS} ;
\end{tikzpicture}
\end{subfigure}
\begin{subfigure}{0.45\textwidth}
\begin{tikzpicture}
\draw (0,0) node[inner sep = 0] {\includegraphics[height=5cm]{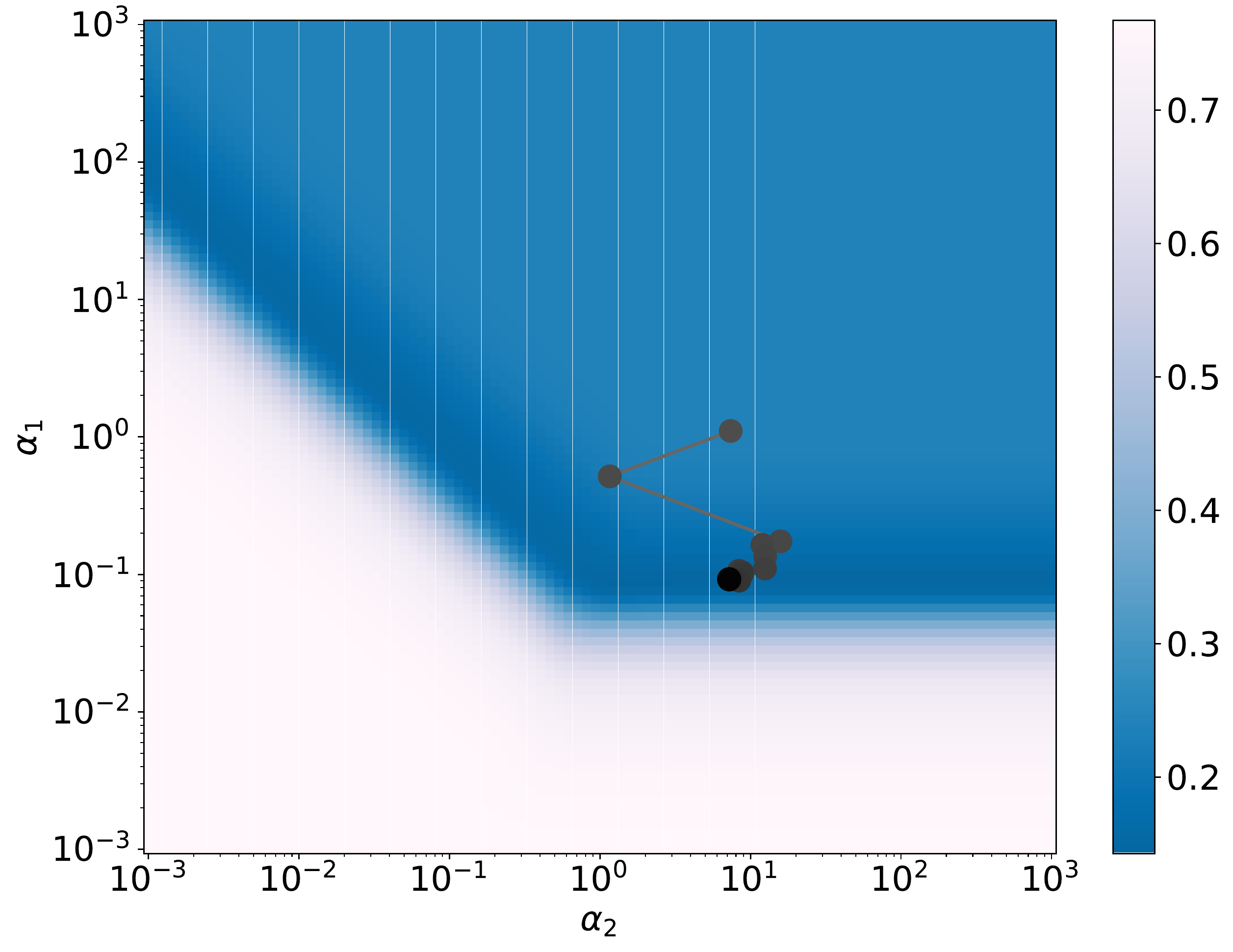}};
\node[draw, fill = white] at (-1.5,-1.7) {Py-BOBYQA} ;
\end{tikzpicture}
\end{subfigure}
\hspace{0.93cm}
\begin{subfigure}{0.45\textwidth}
\includegraphics[height=5cm]{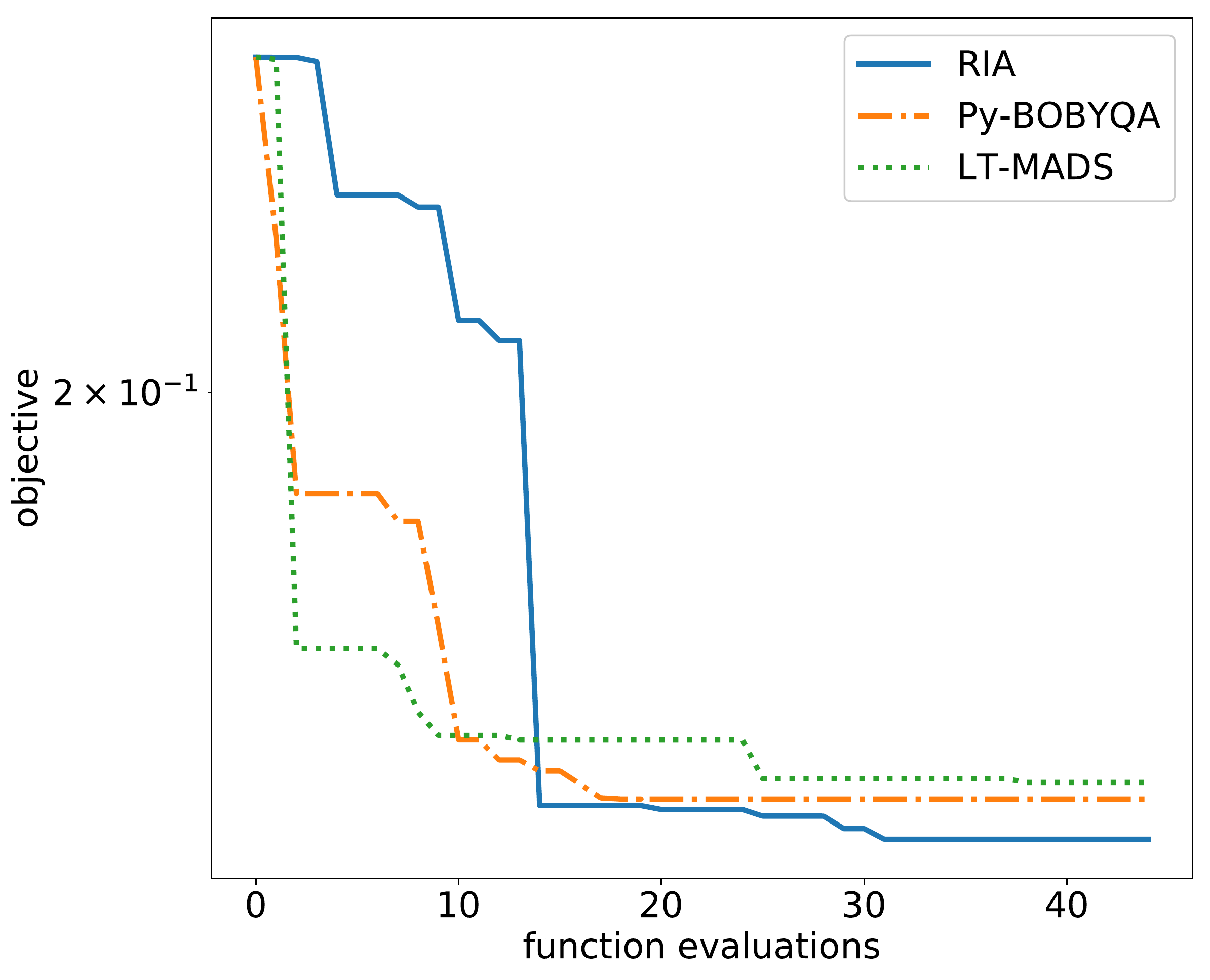}
\end{subfigure}
\caption{Comparison of optimisation methods for TGV denoising with SSIM scoring function for a different starting point. Top left: Plot of iterates of the Itoh--Abe method. Top right: Plot of iterates of the LT-MADS method. Bottom left: Plot of iterates of the Py-BOBYQA method. Bottom right: Comparison of convergence rates for the methods with respect to function evaluations.}
\label{fig:tgv_rate_2}
\end{figuretmp}

\section{Conclusion}

\label{sec:conclusion}

In this paper, we have shown that the randomised Itoh--Abe methods are efficient and robust schemes for solving unconstrained nonsmooth, nonconvex problems without the use of gradients or subgradients. Furthermore, the favourable rates of dissipativity that the discrete gradient method inherits from the gradient flow system extends to the nonsmooth case. We show, under minimal assumptions on the objective function, that the methods admit a solution that is computationally tractable, and the iterates converge to a connected set of Clarke stationary points. Through examples, the assumptions are also shown to be necessary.

The methods are shown to be robust and versatile optimisation schemes. It locates the global minimisers of the Rosenbrock function and a variant of Nesterov's nonsmooth Chebyshev--Rosenbrock functions. The efficiency of the Itoh--Abe discrete gradient method for smooth problems has already been demonstrated elsewhere \citep{gri17, miy17, rin17}. We also consider its application to bilevel learning problems and compare its performance to the derivative-free Py-BOBYQA and LT-MADS methods.

Future work will be dedicated to adapting the randomised Itoh--Abe methods for constrained optimisation problems, establishing convergence of the iterates of the method for Kurdyka-{\L}ojasiewicz functions \citep{att10}, and analysing the Lipschitz continuity properties of bilevel optimisation for variational regularisation problems.

\begin{acknowledgements}
The authors give thanks to Lindon Roberts for helpful discussions and for providing code for Py-BOBYQA, and to Antonin Chambolle for helpful discussions.
\end{acknowledgements}


\bibliographystyle{spmpsci}      

\bibliography{dg_refs}

\end{document}